\newcommand \url [1] {\texttt{#1}}
\newcommand \init [1] {#1}
\newcommand \etbf [1] {#1} 
\newenvironment{enumeratei}{\begin{enumerate}[\upshape (i)]} {\end{enumerate}}
\numberwithin{equation}{section}
\theoremstyle{plain}
 \newtheorem{theorem}{Theorem}[section]
 \newtheorem{lemma}[theorem]{Lemma}
 \newtheorem{proposition}[theorem]{Proposition}
 \newtheorem{corollary}[theorem]{Corollary}
\theoremstyle{definition}
 \newtheorem{definition}[theorem]{Definition}
 \newtheorem{remark}[theorem]{Remark}
\theoremstyle{remark}
\theoremstyle{plain} 
\theoremstyle{definition} 
\theoremstyle{remark} 
\newcommand \ssty [1] {\scriptstyle{#1}}
\newcommand \sssty [1] {\scriptscriptstyle{#1}}
\newcommand \tbf[1] {\textbf{#1}}
\renewcommand \emptyset{\varnothing}
\renewcommand\phi{\varphi}
\renewcommand\rho{\varrho}   
\renewcommand\epsilon{\varepsilon} 
\renewcommand \theta {\vartheta}   
\newcommand\semmi[1] {}
\newcommand\piros[1]{{\textcolor{red}{#1}}}
\newcommand\tudnivalo [1] {}  
\newcommand \temphint [1]{}
\newcommand \czgsch {}
\newcommand \czgonly {}
\newcommand \czgolub {}
\newcommand \balpha {{\boldsymbol{\alpha}}}
\newcommand \bbeta  {\boldsymbol{\beta}}
\newcommand \bgamma{\boldsymbol{\gamma}}
\newcommand \bdelta{\boldsymbol{\delta}}
\newcommand \jcongg {\boldsymbol{\theta}}
\newcommand \dual [1] {{{#1}{}^{\delta}}}
\newcommand \Jir [1] {\textup{Ji}\,#1} 
\newcommand \Jirn [1] {\textup{Ji}_0\,#1}  
\newcommand \Mir [1] {\textup{Mi}\,#1}
\newcommand \Mire[1] {\textup{Mi}_1\,#1}
\newcommand \length [1] {\textup{length}\,#1}
\newcommand \blength [1] {\textup{length}(#1)} 
\newcommand\ideal[1]{\mathord\downarrow #1}
\newcommand\filter[1]{\mathord\uparrow #1}
\newcommand \tope [1]    {1_{#1}} 
\newcommand \bote [1]    {0_{#1}} 
\newcommand \leftb [1]  {\textup{C}_{{\textup{l}}}(#1)} 
\newcommand \rightb [1] {\textup{C}_{{\textup{r}}}(#1)} 
\newcommand \bound [1] {\textup{Bnd}(#1)} 
\newcommand \Diag [1] {\textup{Dgr}(#1)}  
\newcommand \Nar [1] {\textup{Nar}(#1)}   
\newcommand \subnor [1]{{\textup{NsSub}}\,#1} 
\newcommand \Halocs {{\textup{CSL}}} 
\newcommand \smLatsign {\textup{SSL}}
\newcommand \multi [1] {{\ddot{#1}}}
\newcommand \smLat[1] {\smLatsign(#1)}
\newcommand \ismLat [1]{\smLatsign(#1)\fentjelkong1}
\newcommand \fentjelkong[1]{{}^{\kern- #1pt \mathord{\cong}}}
\newcommand \sampleclssign {\mathcal K}
\newcommand \dsampleclasps[1]  {\sampleclssign{}^{\kern0pt\delta}(#1)}
\newcommand \disampleclasps[1] {\sampleclssign{}^{\kern0pt\delta}(#1)\fentjelkong0}
\newcommand \LatCompSerSign {\textup{CSL}}
\newcommand \multiLatCompSerSign {\textup{CS}\multi{\textup{L}}}
\newcommand \dLcs  [1] {\LatCompSerSign{}^{\kern0pt\delta}(#1)}
\newcommand \dpLcs [1] {{}^{\kern0pt\delta}\kern-1pt\multi{\LatCompSerSign}(#1)}
\newcommand \diLcs [1] {\LatCompSerSign{}^{\kern0pt\delta}(#1)\fentjelkong1}
\newcommand \dipLcs[1] {{\multiLatCompSerSign}{}^{\kern0pt\delta}(#1)\fentjelkong1}
\newcommand \izoclass [1] {\mathbf I(#1)}
\newcommand \Seg [1] {\textup{Seg}(#1)}
\newcommand \blokk [2] {#1/#2}
\newcommand \bblokk [2] {(#1)/#2}
\newcommand \bbblokk [2] {\bigl(#1\bigr)/#2}
\newcommand \pblokk [2] {\textup{Seg}(#2, #1)}
\newcommand\quotientalg [2] {{#1}/{#2}  }
\newcommand\quotientset [2] {{#1}/{#2}  }
\newcommand \restrict [2] {{#1}\kern-1pt \rceil_{\kern-1pt #2}}
\newcommand\set [1]{\{#1\}}
\newcommand \bigset[1] {\bigl\{#1\bigr\}} 
\newcommand \osubs{{\mathord{\subseteq}}} 
\newcommand \osups{{\mathord{\supseteq}}} 
\newcommand \boper [1] {#1^{\kern-1pt\ssty{{\mathord=\kern-5pt\mathord{\parallel}}}}}
\newcommand \kernit {{\kern -6pt}}
\newcommand \tbu[1] {{ \phantom{\Big|} \kern -2pt \boxed{\tbf{#1}}  }}  
\newcommand \tbw[1] {{ \phantom{\Big|} \kern -2pt \boxed{\tbf{#1}}\kernit}}  
\newcommand \vakvec [1] {\vec{#1}}
\newcommand \secie {\boldsymbol{\rho}_{\kern -1pt e}^i}
\newcommand \bpbeta[1] {\boldsymbol{\beta}_{\kern-1pt #1}}
\newcommand \lat [1] {{#1}^{\sssty{\textup{lat}}} }
\newcommand \biglat [1] {{(#1)}{}^{\sssty{\textup{lat}}} }
\newcommand \bigDiag [1] {\textup{Diag}\bigl(#1\bigr)}
\newcommand \graf [1] { {#1}{}^{\sssty{\bullet} }}
\newcommand \isom  {\boldsymbol{\rho}_{\sssty{\mathord{\cong}}}}
\newcommand \joing {\mathrel{\mathord\vee_{\kern -2pt G}}}
\newcommand \joinl {\mathrel{\mathord\vee_{\kern -2pt L}}}
\newcommand \joinvg {\mathrel{\mathord\vee_{\kern -2pt G'}}}
\newcommand \diai [1] {#1^{{\kern-0.7pt\natural}}}
\newcommand \diah [1] {#1{}^{{\kern-0.7pt  \sssty{{\triangledown}} }}}
\newcommand \bigdiah [1] {\diah {\bigl(#1\bigr)} }
\newcommand \diag [1] {#1{}^{{\kern-0.7pt\ast}}}
\newcommand \diac [1] {(#1)^{{\kern-0.7pt\diamond}}}
\newcommand \diavarc [1] {#1^{{\kern-0.7pt\diamond}}}
\newcommand \scells [1]   {\textup{SCells}(#1)}
\newcommand \celldn [1] {{\textup{cell}_{\kern-0.3pt\sssty{\pmb{\mathord{\downarrow}}}}}(#1)}
\newcommand \cellup [1] {{\textup{cell}^{\kern-0.7pt\sssty{\mathord{\pmb{\uparrow}}}}}(#1)}
\newcommand \semipatch  {{ \,\pmb{\mathcal H}\kern 0.5pt}} 
\newcommand \patch [1] { \pmb{\mathcal P}_{\kern-2pt\textup{max}}(#1)} 
\newcommand \aslim [1] {#1^{\kern-1pt \bullet}} 
\newcommand \ilupphold [2] {\textup{sp}_{\sssty{\textup{left}}}^{\kern-2pt +\kern 2 pt #2}(#1)} 
\newcommand \irupphold [2] {\textup{sp}_{\sssty{\textup{right}}}^{\kern-2pt +\kern 2pt #2}(#1)} 
\newcommand \Ker[1] {\textup{Ker}\,#1} 
\newcommand \cproj {\mathrel{ \mathord{\Rightarrow} \kern-7.5pt \mathord{\Rightarrow} }} 
\newcommand \cpreq {\mathrel{ \mathord{\Leftarrow}  \kern-7.5pt \mathord{\Leftrightarrow} \kern-7.5pt \mathord{\Rightarrow} }} 
\newcommand \uppers {\,{\buildrel{\sssty{\textup{up}}}\over \rightarrow}\kern-8pt\mathord{\rightarrow}\; } 
\newcommand \upperpa [1] {\,{\buildrel{\sssty{\textup{up}}}\over \rightarrow}\kern-8pt\mathord{\rightarrow}_{#1}\; } 
\newcommand \dnpers {\,{\buildrel{\sssty{\textup{dn}}}\over \rightarrow}\kern-8pt\mathord{\rightarrow}\; } 
\newcommand \dnperpa [1] {\,{\buildrel{\sssty{\textup{dn}}}\over \rightarrow}\kern-8pt\mathord{\rightarrow}_{#1}\; } 
\newcommand \drestrict [2] {{#1}\rceil_{\kern-1pt #2}} 
\begin{document}
\title[Composition series and slim semimodular lattices]
{Composition series in groups and the structure of slim semimodular lattices}
\author[G.\ Cz\'edli]{G\'abor Cz\'edli}
\email{czedli@math.u-szeged.hu}
\urladdr{http://www.math.u-szeged.hu/$\sim$czedli/}
\address{University of Szeged\\Bolyai Institute\\
Szeged, Aradi v\'ertan\'uk tere 1\\HUNGARY 6720}

\author[E.\,T.\,Schmidt]{E.\,Tam\'as Schmidt}
\email{schmidt@math.bme.hu} 
\urladdr{http://www.math.bme.hu/$\sim$schmidt/}
\address{Mathematical Institute of the Budapest University of
    Technology and Economics\\
          M\H{u}egyetem rkp.~3\\
          H-1521 Budapest\\
          Hungary}

\thanks{This research was supported by the NFSR of Hungary (OTKA), grant numbers  K77432 and
K83219, and by  T\'AMOP-4.2.1/B-09/1/KONV-2010-0005}


\subjclass[2010]{Primary 06C10, secondary 20E15 }
\keywords{Composition series,  Jordan-H\"older Theorem, group,  slim lattice, semimodularity, planar lattice, permutation}

\date{May 4, 2011; revised January 9, 2013}

\begin{abstract} Let $\vakvec H$ and $\vakvec K$ be finite composition series of a group $G$. 
The intersections $H_i\cap K_j$ of their members form a lattice $\Halocs(\vakvec H,\vakvec K)$ under set inclusion. 
Improving the Jordan-H\"older theorem, G.\ Gr\"atzer, J.\,B.\ Nation and the present authors  have recently shown that  $\vakvec H$ and $\vakvec K$ determine a unique permutation $\pi$
such that,   for all  $i$, the $i$-th factor of $\vakvec H$ is ``down-and-up projective'' to the $\pi(i)$-th factor of $\vakvec K$.
We prove that $\pi$ determines the lattice $\Halocs(\vakvec H,\vakvec K)$. 
More generally, we describe slim semimodular lattices, up to isomorphism, by~permutations, up to an equivalence relation called  ``sectionally inverted or equal''. As a consequence, we prove that the abstract class of all $\Halocs(\vakvec H,\vakvec K)$ coincides with the class of duals of all slim semimodular lattices.
\end{abstract}
\maketitle
\section{Introduction}\label{section:intRo}
\subsection{{Composition series and lattices}}
Let $\vakvec H\colon \set1= H_0 \triangleleft H_1 \triangleleft \cdots \triangleleft H_n =G\label{f:nsera}$ and $\vakvec K\colon \set1=K_0 \triangleleft K_1  \triangleleft \cdots \triangleleft K_n =G$ be  composition series of a group $G$. Denote $\bigset{H_i\cap K_j: i,j\in\set{0,\ldots,n}}$ by $\Halocs(\vakvec H,\vakvec K)$, or by $\Halocs_n(\vakvec H,\vakvec K)$ if we want to specify the common length $n$ of the composition series. Clearly, 
\[\Halocs(\vakvec H,\vakvec K)=\bigl(\Halocs(\vakvec H,\vakvec K);\subseteq\bigr)\]
is a lattice, not just an order. (Orders are also called posets, that is, \underbar partially \underbar{o}rdered \underbar{sets}. The acronym $\Halocs$ comes from ``Composition Series Lattice''.) 
As usual, the relation ``{subnormal subgroup}'' is the transitive closure of the relation ``normal subgroup''. 
For subnormal subgroups $A \triangleleft B$ and $C\triangleleft D$ of  $G$, the quotient $B/A$ will be called \emph{subnormally down-and-up projective} to $D/C$, if there are subnormal subgroups $X\triangleleft Y$ of $G$ such that 
\begin{equation}\label{eqDnoUPPpsrpsccccb}
AY=B,\quad A\cap Y=X,\quad CY=D,  \quad C\cap Y=X \text.
\end{equation}
Clearly, $B/A\cong D/C$ in this case, because both groups are isomorphic with $Y/X$. 
Since $G$ is of finite composition length, its subnormal subgroups form a sublattice $\subnor G=(\subnor G;\subseteq)$ of the lattice of all subgroups by a classical result of H.\ Wielandt~\cite{r:wieland}; see also  R.\ Schmidt~\cite[Theorem 1.1.5]{r:rschmidt} and the remark after its proof,  or  M.~Stern~\cite[p.~302]{r:stern}.

It is not hard to see that 
 $\subnor G$  is \emph{dually semimodular} (also called \emph{lower semimodular}); see  \cite[Theorem 2.1.8]{r:rschmidt}, or the  proof of  \cite[Theorem 8.3.3]{r:stern}, or the proof of  J.\,B.\ Nation~\cite[Theorem 9.8]{r:nationbook}. Since this property depends only on the meet operation and $\Halocs(\vakvec H,\vakvec K)$ is a meet-subsemilattice of  $\subnor G$, we conclude that $\Halocs(\vakvec H,\vakvec K)$ is a dually semimodular  lattice. Note, however, that $\Halocs(\vakvec H,\vakvec K)$ is \emph{not} a sublattice of $\subnor G$ in general; this is witnessed by the eight-element elementary 2-group $(\mathbb Z_2;+)^3$.

A lattice is \emph{dually slim} if it is finite and it has no three pairwise incomparable meet-irreducible elements. Since  each meet-irreducible element of $\Halocs(\vakvec H,\vakvec K)$ occurs in $\vakvec H$ or $\vakvec K$, it follows that  $\Halocs(\vakvec H,\vakvec K)$ is a \emph{dually slim} lattice.

We  proved the following result in \czgsch\cite{rczgschjordhold}.
\begin{theorem}\label{jhbBNbeMbzTk}
There exists a \emph{unique} permutation $\pi$ of the set $\{1,\ldots,n\}$ such that $H_i/H_{i-1}$ is subnormally down-and-up projective to $K_{\pi(i)}/K_{\pi(i)-1}$, for $i=1,\ldots,n$.
\end{theorem}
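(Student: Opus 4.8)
The plan is to translate the statement into the lattice $\subnor G$ of subnormal subgroups and then exploit the planar, dually slim structure of $\Halocs(\vakvec H,\vakvec K)$. First I would record the dictionary between the group relation in \eqref{eqDnoUPPpsrpsccccb} and lattice transposition: since $\subnor G$ is a sublattice of the subgroup lattice (Wielandt) and the relevant joins are ordinary products, the four equalities $AY=B$, $A\cap Y=X$, $CY=D$, $C\cap Y=X$ say exactly that, in $\subnor G$, the prime intervals $[A,B]$ and $[C,D]$ admit the common lower transpose $[X,Y]$; that is, $[A,B]\searrow[X,Y]\nearrow[C,D]$. Hence ``$H_i/H_{i-1}$ is down-and-up projective to $K_{\pi(i)}/K_{\pi(i)-1}$'' means precisely that the boundary prime intervals $[H_{i-1},H_i]$ and $[K_{\pi(i)-1},K_{\pi(i)}]$ sit in a single ``valley'' $\searrow\cdot\nearrow$ of $\subnor G$. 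The theorem thereby reduces to a Jordan--H\"older statement with a unique permutation for the two maximal chains $\vakvec H$ and $\vakvec K$ of the \emph{dually semimodular} lattice $\subnor G$.

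For existence I would argue inside $L:=\Halocs(\vakvec H,\vakvec K)$, which by the discussion preceding the theorem is planar, dually slim and dually semimodular, and carries the images of $\vakvec H$ and $\vakvec K$ as its two boundary chains (each of length $n$). Dualizing the Cz\'edli--Schmidt and Gr\"atzer--Knapp description of slim semimodular lattices, the prime intervals of $L$ split into \emph{trajectories}: maximal sequences of prime intervals in which consecutive members are opposite sides of a covering square. The structural input I need is that, in the dual situation, every trajectory first \emph{descends} and then \emph{ascends} --- it has a unique lowest prime interval $[u,v]$ (a ``valley'') --- and that it meets the boundary $\vakvec H$ in exactly one prime interval and the boundary $\vakvec K$ in exactly one prime interval. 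Defining $\pi(i)=j$ whenever the trajectory through $[H_{i-1},H_i]$ also contains $[K_{j-1},K_j]$ then yields a well-defined bijection of $\{1,\dots,n\}$, because the trajectories partition the prime intervals and each contributes one index on each side.

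It remains to turn the trajectory through $[H_{i-1},H_i]$ and $[K_{j-1},K_j]$, with valley $[u,v]$, into a single witness $X\triangleleft Y$ as demanded by \eqref{eqDnoUPPpsrpsccccb}; this collapsing of an a priori long zigzag into one ``down-then-up'' step is the heart of the matter. I would take $X=u$ and $Y=v$ and verify $H_{i-1}\wedge v=u$ and $K_{j-1}\wedge v=u$ (these are meets, hence computed in $L$, hence genuine intersections) together with $H_{i-1}\vee v=H_i$ and $K_{j-1}\vee v=K_j$; for the joins, which must be read in $\subnor G$ rather than in $L$, it suffices to note that $v\not\le H_{i-1}$, so $H_{i-1}\vee v$ is a subnormal subgroup strictly above $H_{i-1}$ and below $H_i$, whence equal to $H_i$ because $[H_{i-1},H_i]$ is a covering (the factor is simple), and symmetrically for $K$. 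The genuine obstacle is the geometric fact that the boundary edges of a trajectory transpose \emph{directly} down to the valley; proving this --- equivalently, that down-and-up projectivity is transitive in a dually semimodular lattice of finite length and so collapses to a single valley --- is exactly where dual semimodularity is indispensable, and I expect it to require the shearing/parallelogram calculus for covering squares (or, on the group side, a Zassenhaus-type computation).

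Finally, for uniqueness I would invoke the companion lattice lemma that two \emph{distinct} prime intervals lying on one maximal chain of a dually semimodular lattice of finite length are never projective. If $[H_{i-1},H_i]$ were down-and-up projective to both $[K_{j-1},K_j]$ and $[K_{j'-1},K_{j'}]$, transitivity would make the latter two projective to each other; as both lie on the chain $\vakvec K$, this forces $j=j'$. Thus for each $i$ the index $\pi(i)$ is uniquely determined, and the permutation produced above is the only one satisfying the conclusion. Translating the common valley $[X,Y]=[u,v]$ back through the dictionary of the first paragraph recovers the subnormal subgroups $X\triangleleft Y$ of \eqref{eqDnoUPPpsrpsccccb}, completing the proof.
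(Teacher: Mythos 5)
First, a point of reference: this paper does not actually prove Theorem~\ref{jhbBNbeMbzTk}; it quotes it from the earlier paper \cite{rczgschjordhold} and only describes the permutation afterwards (Remark~\ref{whIsPefRjHOdr}), as the trajectory permutation of $\bigdiah{\dual{\Halocs(\vakvec H,\vakvec K)}}$. Your overall architecture for the \emph{existence} half matches that description: translating \eqref{eqDnoUPPpsrpsccccb} into a common lower transpose in $\subnor G$, reading off $\pi$ from trajectories of the dually slim, dually semimodular lattice $L=\Halocs(\vakvec H,\vakvec K)$, and then lifting the joins back to $\subnor G$ via simplicity of the factors is all correct; the ``valley lemma'' you isolate (that the two boundary edges of a trajectory transpose directly down to a single lowest edge) is exactly the dual of \cite[Lemmas 11 and 12]{rczgschjordhold}, which this paper also invokes in the proof of Proposition~\ref{prdiagLpermalleqal}. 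You defer that lemma rather than prove it, but you have correctly located it as the crux.

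The genuine gap is in your uniqueness argument. You invoke ``the companion lattice lemma that two distinct prime intervals lying on one maximal chain of a dually semimodular lattice of finite length are never projective.'' For projectivity in the usual (arbitrary zigzag) sense this is false: the dual of a finite partition lattice $\Pi_m$, $m\geq 4$, is dually semimodular and congruence-simple, so \emph{every} two prime intervals in it are projective, including distinct ones on a single maximal chain. If instead you mean down-and-up projectivity, then the chain statement is trivial (if $j<j'$ then $Y\leq K_j\leq K_{j'-1}$ forces $K_{j'-1}\vee Y=K_{j'-1}\neq K_{j'}$), but then your reduction needs down-and-up projectivity to be \emph{transitive}, which you assert without proof; that transitivity is precisely the nontrivial content of the Gr\"atzer--Nation/Cz\'edli--Schmidt circle of results, not a routine composition of zigzags. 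Worse, the paper explicitly warns that the witnesses $X\triangleleft Y$ in \eqref{eqDnoUPPpsrpsccccb} need not belong to $\Halocs(\vakvec H,\vakvec K)$, so a competing $Y'$ realizing $\pi(i)=j'$ may live anywhere in $\subnor G$; uniqueness therefore cannot be settled inside the planar lattice $L$ where your trajectory calculus operates, and must be argued in the general dually semimodular lattice $\subnor G$. As it stands, the existence half is a sound outline modulo one cited lemma, but the uniqueness half rests on a lemma that is false as stated and on an unproved transitivity claim in the wrong ambient lattice.
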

This permutation will be described later in Remark~\ref{whIsPefRjHOdr}.
Note that, as opposed to the uniqueness of $\pi$, the subnormal subgroups $X$ and $Y$ occurring in \eqref{eqDnoUPPpsrpsccccb} are not unique, in general, and they need not belong to $\Halocs(\vakvec H,\vakvec K)$.
Note also that even the statement on the 
\emph{existence} of $\pi$, due to G.\ Gr\"atzer and J.\,B.\ Nation~\cite{refgrnation}, strengthens  the classical Jordan-H\"older Theorem, see  
 C.\ Jordan~\cite{r:jordan} and O.\ H\"older~\cite{r:holder}.

One of our goals is to show that $\pi$ determines the lattice $\Halocs(\vakvec H,\vakvec K)$, see Corollary~\ref{corcOmpseDeiTmsnd}. We will also show that the 
lattices of the form 
$\Halocs(\vakvec H,\vakvec K)$ are characterized as  duals of slim semimodular lattices, see Corollary~\ref{RcThTwocLsSuQal}. 
These results follow from our main result, Theorem~\ref{thmmAin}, which is purely lattice theoretic.

\subsection{{Slim semimodular lattices and matrices}}
A \emph{slim lattice} is a finite lattice  $M$ such that $\Jirn M$, the order of its join-irreducible elements (including 0), contains no three-elem\-ent antichain. This concept is due to G.\ Gr\"atzer and E.\ Knapp~\cite{r:GratzerKnapp1}. 
By R.\,P.\ Dilworth~\cite{r:Dilw-paper}, a finite lattice $M$ is slim if{f} $\Jirn M$  is the  union of two chains.  
\semmi{A lattice
$M$ is called (upper) \emph{semimodular}, if 
$a \prec  b$ implies that $a\vee c\preceq  b\vee c$,  for all $a,b,c\in M$; 
see  M.\ Stern~\cite{r:stern}, and see also G.\ Gr\"atzer~\cite{rGratzerGLT} and \cite{r:Gr-LTFound}, and \czgsch\cite{r:czgschsomeres} for more about these lattices.}

By   \czgsch{}\cite[Lemma~6]{rczgschjordhold}, slim lattices are \emph{planar}. So they are easy objects to understand. Slim semimodular lattices come up in 
proving Theorem~\ref{jhbBNbeMbzTk} and also in the finite congruence lattice representation problem; see, for example, G.\ Cz\'edli~\czgonly\cite{r:czg-clrp}, G.\ Gr\"atzer and E.\ Knapp~\cite{r:GratzerKnapp3} and \cite{r:GratzerKnapp4}, and E.\,T.\ Schmidt~\cite{r:sch-injrep}. 
Several ways of describing slim semimodular lattices were developed. Two visual (recursive) methods of constructing slim semimodular lattices were given in \czgsch{}\cite{r:czg-sch-visual}. Furthermore, these lattices were characterized by matrices in \czgonly\cite{r:czg-mtx}.
Let $\ismLat h$ denote the set of isomorphism classes of slim semimodular lattices of  length $h$.
Based on the matrix characterization given in \czgonly\cite{r:czg-mtx}, G.\ Cz\'edli, L.\ Ozsv\'art and B.\ Udvari~\czgolub\cite{r:czg-o-u}
succeeded in calculating the number~$|\ismLat h|$  of (isomorphism classes) of slim semimodular lattices of a given length $h$; the  value of $|\ismLat h|$ has been computed  up to $h=100$. 

The matrices in \cite{r:czg-mtx} correspond to bijective partial maps. Although they yield an optimal description in some sense, their definition is a bit complicated. 
Our goal is to describe slim semimodular lattices by (totally defined) bijective maps; namely, by permutations. The fact that three different ideas lead to the same permutations indicate that these permutations are natural objects. 
As opposed to the matrices, our 
permutations say something interesting of the  magnitude of the number $|\ismLat h|$; indeed, our main theorem trivially yields that $h!$ is an upper bound for $|\ismLat h|$.  Furthermore, the present approach yields  Corollaries~\ref{corcOmpseDeiTmsnd} and \ref{RcThTwocLsSuQal}, while the matrix approach  does not.

\subsection{{Planar diagrams}}
To avoid ambiguity, we have to distinguish between planar lattices and their diagrams. 
Let $\diag M$  be a planar diagram of a finite (planar) lattice $M$. 
For $u\leq v\in M$, let  $\diag{[u,v]}$ denote the unique diagram of the interval $[u,v]$ 
{determined by $\diag M$.}
The edges of $\diag M$ divide the plane into regions; the minimal regions are called \emph{cells}. By a \emph{covering square} we mean a four-element cover-preserving sublattice of length two. Cells that are covering squares are called \emph{$4$-cells}. 
The \emph{left boundary chain}, the \emph{right boundary chain} and the \emph{boundary} of $\diag M$ are denoted by $\leftb{\diag M}$, $\rightb{\diag M}$ and  $\bound {\diag M}=\leftb{\diag M}\cup\rightb{\diag M}$, respectively.

Next, let $\diah M$  also be a planar diagram of  $M$. Then $\diag M$ and $\diah M$ are called \emph{boundarily similar diagrams} of $M$, if $\leftb{\diag M} = \leftb{\diah M}$ and  
$\rightb{\diag M} = \rightb{\diah M}$.
(Notice that if two diagrams are similar in the sense of  D.\ Kelly and I.\ Rival~\cite{r:kelly-rival}, then they are boundarily similar, but not conversely.)
More generally, if $\diag {M_i}$ is a planar diagram of $M_i$, then 
$\diag {M_1}$ is boundarily similar to $\diag {M_2}$ if there is a lattice isomorphism $\gamma\colon M_1\to M_2$ such that $\gamma(\leftb{\diag {M_1}}) = \leftb{\diag {M_2}}$ and $\gamma(\rightb{\diag {M_1}}) = \rightb{\diag {M_2}}$. 
We will consider diagrams only up to boundary similarity. 

Let $\Diag M$ denote the \emph{set of all planar diagrams} of $M$. Then $\Diag M$ is a finite set since boundarily similar diagrams are considered equal. 
{Sometimes} we need a notation, $\lat D$, which is the lattice $\lat D$ from its  diagram $D$. Note that $M=\biglat{\diag M}$ for every planar lattice $M$ and any $\diag M\in\Diag M$.

Let $L$ be a slim semimodular lattice  of length $n$. Although it is  $L$ we want to characterize  by permutations, in this section, we work with a fixed diagram $\diag L$ of $L$. 
The elements of $\bound{\diag L}$ will be denoted as follows: 
\begin{align}\label{eleriboudelesnot}
\begin{aligned}
\leftb {\diag L}&=\set{0=c_0\prec c_1\prec\cdots\prec c_n=1},\cr \rightb{\diag L}&=\set{0=d_0\prec d_1\prec\cdots\prec d_n=1}\text.
\end{aligned}
\end{align}

An element of $L$ is called a \emph{narrows}  if it  is comparable with all elements of $L$. This terminology is  from G.\ Gr\"atzer and R.\,W.\ Quackenbush~\cite{r:gr-qbush}; however, as opposed to \cite{r:gr-qbush}, we define 0 and 1 as narrows of $L$. 
The set of narrows is denoted by $\Nar L$. The elements of $\Nar L\setminus\set{0,1}$ are called \emph{nontrivial narrows} of $L$. 
For  $\diag L\in \Diag L$, we define  $\Nar{\diag L}:=\leftb {\diag L}\cap\rightb{\diag L}$; clearly, $\Nar{\diag L}=\Nar L$.
Note that $\Nar L$  is a chain
. 
A finite lattice $M$ is called (\emph{glued sum}) \emph{indecomposable} if $|M|=1$ or $2=|\Nar M|<|M|$. 

The set of all meet-irreducible elements (including $1$) is denoted by $\Mire M$. Let $\Jir M=\Jirn M\setminus\set 0$ and $\Mir M=\Mire M\setminus\set 1$. 
\czgsch~\cite[Lemma 7]{r:czg-sch-visual} asserts that $\bound {\diag L}$  is the same for all $\diag L\in\Diag L$. Hence, we can define $\bound L$ as  $\bound {\diag L}$, where $\diag L\in\Diag L$. 
By G.~Gr\"atzer and E.~Knapp~\cite[Lemma 4]{r:GratzerKnapp1}, 
\begin{equation}\label{eeMntcotMsttw}
\text{every element of }L\text{ is covered by at most two elements.} 
\end{equation}
By \cite[Lemma 8]{r:GratzerKnapp1}, $L$ is a so-called \emph{$4$-cell lattice}; this means that  all cells of every  $\diag L\in \Diag L$  are  $4$-cells. Furthermore, by \czgsch~\cite[Lemma 6]{r:czg-sch-visual} and \cite[Lemma 7]{rczgschjordhold}
\begin{gather}
\Jirn L\subseteq \bound L,
\label{ejirsubbound}\\
\text{the }4\text{-cells of }\diag L\text{ and the covering squares of }L\text{ are the same.}\label{eslfcelltsmecosQre}
\end{gather}
As usual, the set of permutations acting on $\set{1,\ldots,n}$ is denoted by $S_n$. The ordering  $1<\cdots<n$ of the underlying set 
will be important.

\begin{figure}
\centerline
{\includegraphics[scale=1.0]{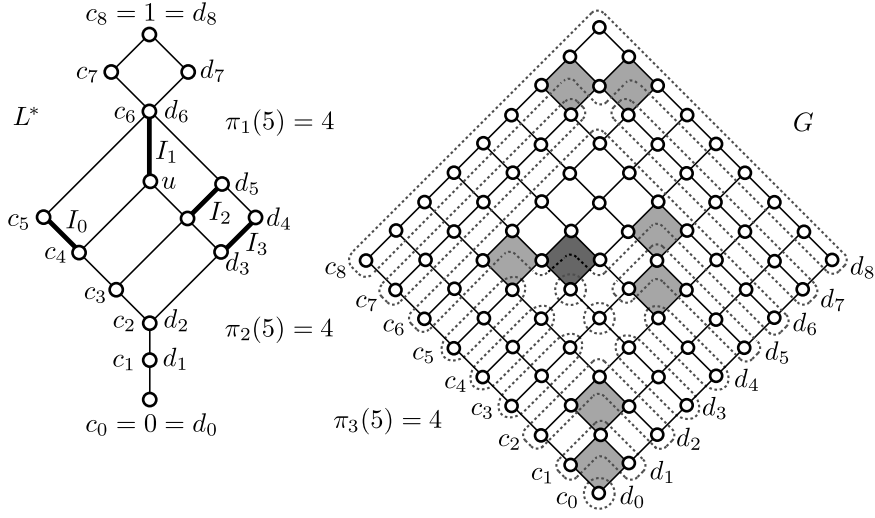}}
\caption{Three ways to define a permutation\label{figureone}}
\end{figure}

\section{Three ways to associate a permutation with a planar diagram}\label{sectionthreeways}

\begin{definition}\label{defpermone}
Let $L$ be a slim semimodular lattice. For a diagram $\diag L\in\Diag L$, we use the  notation introduced in \eqref{eleriboudelesnot}. 
We associate a permutation $\pi_1\in S_n$ with $\diag L$ as follows; see Figure~\ref{figureone} for an illustration. Let $i\in \set{1,\ldots,n}$. Take the prime interval $I_0:=[c_{i-1},c_i]$ on the left boundary. If $I_t$ is defined and it is on the left boundary of a $4$-cell, then let $I_{t+1}$ be the opposite edge of this $4$-cell. Otherwise, $I_{t+1}$ is undefined. The sequence $I_0,I_1, \ldots I_m$ of all the defined $I_t$-s is called a \emph{trajectory}. It goes from left to right, and it stops at the right boundary. Let  $I_m =[d_{j-1},d_j]$. We define $\pi_1(i):=j$. For $\diag L$ on the left of  Figure~\ref{figureone}, $i=5$, and $m=3$, the trajectory in question consists of the thick edges. 
\end{definition}

We consider $\diag L$ up to boundary similarity, and only  $\leftb{\diag L}$ and $\rightb{\diag L}$ are fixed. Hence it is not so clear  how the  trajectory goes in the  ``unknown interior'' of $\diag L$.
However, based on \eqref{eslfcelltsmecosQre}, it was proved in \czgsch~\cite{rczgschjordhold}  that $\pi_1$ is a uniquely defined map and it is a permutation. In fact, \cite{rczgschjordhold} proves  an appropriate uniqueness result for  any two maximal chains without assuming slimness.

The definition of $\pi_1$ is quite visual. The next one is less visual but conceptually simpler. 
{As usual, $\ideal u$ stands for  $\set{x\in L: x\leq u}$, and $\filter u$ is defined dually.}

\begin{definition}\label{defpermmirr} 
We associate a permutation $\pi_2\in S_n$ with ${\diag L}$ as follows; see Figure~\ref{figureone}  again for an illustration. Let $i\in \set{1,\ldots,n}$. Take a meet-irreducible element $u\in L$ such that $c_i$ is the smallest element of $\leftb {\diag L}\setminus \ideal u$. Let $d_j$ be the smallest element of $\rightb{\diag L}\setminus \ideal u$. We define $\pi_2(i):=j$. 
\end{definition}

\begin{lemma}\label{lemdeftwoworks} 
$\pi_2$ is uniquely defined and belongs to $S_n$. Furthermore, the element~$u$ in  Definition~\ref{defpermmirr} is uniquely determined.
\end{lemma}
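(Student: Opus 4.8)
\textit{Proof proposal.} The plan is to derive the whole lemma from one uniqueness statement, applied symmetrically to the two boundary chains. For $i\in\set{1,\dots,n}$ put $S_i:=\set{x\in L : c_{i-1}\leq x\text{ and }c_i\not\leq x}$. If $x\in S_i$ and $y\leq x$ then $c_i\not\leq y$, so $S_i$ is an order ideal, and hence its maximal elements are pairwise incomparable. If $u$ is maximal in $S_i$ and $u=v\wedge w$ with $u<v,w$, then $v,w\notin S_i$ give $c_i\leq v$ and $c_i\leq w$, whence $c_i\leq v\wedge w=u$, a contradiction; thus every maximal element of $S_i$ lies in $\Mir L$. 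First I would note that $u$ is admissible in Definition~\ref{defpermmirr} if and only if $u$ is a maximal element of $S_i$: the conditions $c_{i-1}\leq u$ and $c_i\not\leq u$ say exactly that $c_i$ is the least element of $\leftb{\diag L}\setminus\ideal u$, and by (upper) semimodularity $c_{i-1}\prec c_i$ forces $u=u\vee c_{i-1}\preceq u\vee c_i$, so a meet-irreducible $u\in S_i$ has its unique cover $u^\ast=u\vee c_i\geq c_i$, making $u$ maximal in $S_i$. Therefore ``$u$ is uniquely determined'' is equivalent to ``$S_i$ has a unique maximal element,'' and once the latter is proved, $\pi_2(i)$ is automatically well defined.

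The heart of the argument is that $S_i$ has a \emph{unique} maximal element. For maximal $u$ we have the prime interval $[u,u^\ast]$ with $u^\ast=u\vee c_i$ and $c_i\wedge u=c_{i-1}$ (since $c_{i-1}\prec c_i$ and $c_i\not\leq u$); hence $[c_{i-1},c_i]$ and $[u,u^\ast]$ are (up-)perspective, thus projective, and so lie on a common trajectory $T$ (that of $[c_{i-1},c_i]$ in the sense of Definition~\ref{defpermone}). I would then invoke the trajectory calculus of \cite{rczgschjordhold,r:czg-sch-visual}: in a slim semimodular lattice a trajectory ascends to a unique peak and then descends. Each ascending prime interval strictly below the peak is the lower-left edge of the next $4$-cell, so its lower endpoint has a second upper cover; each descending prime interval is the lower-right edge of a $4$-cell, so again its lower endpoint has two upper covers; only at the peak is the lower endpoint forced to have a unique upper cover, i.e.\ to be meet-irreducible. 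Consequently $[u,u^\ast]$, having meet-irreducible lower endpoint $u$, must be the peak of $T$, and $u$ is its uniquely determined lower endpoint. This proves uniqueness, and with the first paragraph the well-definedness of $\pi_2$.

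It remains to see $\pi_2\in S_n$. Write $\lambda(i)\in\Mir L$ for the unique maximal element of $S_i$. From $u=\lambda(i)$ one recovers $i$, since $c_i$ is the least element of $\leftb{\diag L}$ not below $u$; hence $\lambda$ is injective, and its image $U\subseteq\Mir L$ has $n$ elements. The mirror argument on the right boundary yields an injection $\rho$ of $\set{1,\dots,n}$ onto an $n$-element set $U'$, where $\rho(j)$ is the unique maximal element of $S^{r}_j:=\set{x\in L : d_{j-1}\leq x\text{ and }d_j\not\leq x}$. To check $U=U'$, take $u=\lambda(i)$ and let $j$ be least with $d_j\not\leq u$ (such $j$ exists, as $u\neq 1$); then $d_{j-1}\leq u$ and, as above, the unique cover of $u$ is $u^\ast=u\vee d_j\geq d_j$, so $u$ is maximal in $S^{r}_j$, i.e.\ $u=\rho(j)$. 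Thus $U\subseteq U'$, and $U=U'$ by symmetry. By Definition~\ref{defpermmirr}, $\pi_2(i)$ is precisely the index $j$ with $\rho(j)=\lambda(i)$, that is $\pi_2=\rho^{-1}\circ\lambda$, a bijection of $\set{1,\dots,n}$.

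The semimodularity computations and the bookkeeping of the last paragraph are routine; the genuine obstacle is the uniqueness statement of the second paragraph, whose only non-elementary input is the unimodality of trajectories in slim semimodular lattices. A self-contained alternative would assume two incomparable maximal elements $u_1,u_2$ of $S_i$, observe $u_1\vee u_2\geq c_i$ while $u_1,u_2\not\geq c_i$, and attempt to extract a forbidden three-element antichain among boundary join-irreducibles, contradicting slimness; but closing that case analysis cleanly appears harder than quoting the trajectory structure of \cite{rczgschjordhold,r:czg-sch-visual}.
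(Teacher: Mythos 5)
Your overall architecture is sound and your first and third paragraphs essentially reproduce the paper's: admissible elements $u$ are exactly the maximal elements of $S_i=\filter{c_{i-1}}\setminus\filter{c_i}$, and bijectivity of $\pi_2$ follows from the left--right symmetric construction (the paper phrases this as $\pi_2\circ\sigma=\sigma\circ\pi_2=\id$ for the mirror map $\sigma$). The difference is in the uniqueness argument, and here the paper is much lighter than you are. By \eqref{ejirsubbound} every element of $L$ is of the form $c_s\vee d_t$; for $x\in S_i$ the condition $c_{i-1}\leq x$, $c_i\not\leq x$ forces $x=c_{i-1}\vee d_t$, so $S_i$ is a \emph{chain} and trivially has a unique maximal element. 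Your closing remark that a self-contained argument ``appears harder than quoting the trajectory structure'' is exactly backwards relative to the paper: the chain observation makes the whole issue a one-liner, with no trajectories needed. (The paper defers trajectory unimodality to the proof of Proposition~\ref{prdiagLpermalleqal}, where it genuinely is needed to show $\pi_1=\pi_2$.)

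Your trajectory route can be made to work, but as written it has one real gap: the step ``$[c_{i-1},c_i]$ and $[u,u^\ast]$ are perspective, thus projective, and so lie on a common trajectory $T$.'' Neither perspectivity nor projectivity of prime intervals puts them in the same trajectory as a matter of definition; this is a substantive fact about slim semimodular lattices. It can be proved by interpolating a maximal chain $c_{i-1}=w_0\prec w_1\prec\cdots\prec w_r=u$ and checking, via semimodularity and \eqref{eslfcelltsmecosQre}, that each $\set{w_{s-1},\,w_s,\,w_{s-1}\vee c_i,\,w_s\vee c_i}$ is a $4$-cell, so that $[w_{s-1},w_{s-1}\vee c_i]$ and $[w_s,w_s\vee c_i]$ are opposite edges and the whole sequence stays in one trajectory; but you neither supply this argument nor point to a precise statement in \cite{rczgschjordhold} that contains it. Two smaller points: $S_i$ is not an order ideal of $L$ (downward closure destroys $c_{i-1}\leq x$), though the conclusion you draw from that claim is vacuously true; and your unimodality argument only shows that non-peak members of $T$ have meet-reducible lower endpoints, which is the direction you need, so that part is fine once membership in $T$ is secured.
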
 

\begin{proof} Let $B_i=\filter{c_{i-1}}\setminus \filter{c_i}$. It is not empty since it contains $c_{i-1}$. By \eqref{ejirsubbound}, each element of $B_i$ is of the form $c_s\vee d_t$, and we can clearly assume that $s=i-1$. Since $\rightb{\diag L}$ is a chain, we conclude that $B_i$ is also a chain. Let $u$ be the largest element of $B_i$. Obviously, $u\in\Mir L$, whence $u$ satisfies the requirements of Definition~\ref{defpermmirr}.  Assume that so does $v$. Then  $v\in B_i\cap\Mir L$ and $v\leq u$. 
By semimodularity, $v=c_{i-1}\vee v\preceq c_i\vee v$. Clearly, $ B_i\not\ni c_i\vee v  \not\leq u$, implying that $v=u\wedge (c_i\vee v )$. Hence $v=u$ since $v$ is meet-irreducible. This proves the uniqueness of $u$ in the definition. Therefore, $\pi_2$ is a uniquely defined $\set{1,\ldots,n}\to\set{1,\ldots,n}$ map. Since $\pi_2$ depends only on the assignment of the left and right boundary chains and on the meet operation,  boundarily similar diagrams of $L$ yield the same $\pi_2$.

Interchanging left and right in the definition, we obtain a uniquely defined map $\sigma\colon \set{1,\ldots,n}\to\set{1,\ldots,n}$ analogously. 
That is, $\sigma(j)=i$ if{f} there is a $u\in \Mir L$ such that $d_j$ and $c_i$ are the  smallest elements of  $\rightb{\diag L}\setminus\ideal u$ and $\leftb{\diag L}\setminus\ideal u$, respectively. 
The uniqueness of $u$ (both in the definition of $\pi_2$ and that of $\sigma$) clearly yields that the composite maps $\pi_2\circ \sigma$ and $\sigma\circ \pi_2$ are the identity maps. Thus, $\pi_2$ is a permutation.
\end{proof}

The following corollary is evident by the second sentence of Lemma~\ref{lemdeftwoworks}. It also  follows easily from known results on convex geometry, see  
R.\,P.\ Dilworth~\cite{r:dilworth40} or K.\ Adaricheva, V.\,A.\ Gorbunov and V.\,I.\ Tumanov~\cite[Theorem 1.7.(1-2)]{r:adarichevaetal}.
\semmi{Yet, to point out a link with  convex geometries, we present an independent proof.}

\begin{corollary}\label{corlmiszma} For every slim semimodular lattice $K$, $|\Mir K|=\length K$.
\end{corollary}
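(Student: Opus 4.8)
The plan is to read Corollary~\ref{corlmiszma} as the assertion that the assignment underlying Definition~\ref{defpermmirr}, which sends an index to its uniquely determined meet-irreducible element, is a \emph{bijection} between $\set{1,\ldots,n}$ and $\Mir K$, where $n=\length K$. First I would fix any diagram $\diag K\in\Diag K$ and adopt the notation of \eqref{eleriboudelesnot}, so that $\leftb{\diag K}=\set{0=c_0\prec\cdots\prec c_n=1}$ is a maximal chain; since finite semimodular lattices satisfy the Jordan--H\"older chain condition, this gives $\length K=n$. By Lemma~\ref{lemdeftwoworks}, for each $i\in\set{1,\ldots,n}$ there is a \emph{unique} meet-irreducible element $u_i$ with $c_i$ the smallest element of $\leftb{\diag K}\setminus\ideal{u_i}$, and moreover $u_i\in\Mir K$. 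Setting $\Phi(i):=u_i$ then defines a map $\Phi\colon\set{1,\ldots,n}\to\Mir K$, and the whole task reduces to showing $\Phi$ is a bijection.

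Next I would construct the candidate inverse $\Psi\colon\Mir K\to\set{1,\ldots,n}$ directly. Given $w\in\Mir K$, we have $w\neq1=c_n$, so $c_n\in\leftb{\diag K}\setminus\ideal w$ and this set is nonempty; since $c_0=0\leq w$, its smallest element is some $c_i$ with $i\geq1$, and I set $\Psi(w):=i$. To see that $\Phi$ and $\Psi$ are mutually inverse, the uniqueness clause of Lemma~\ref{lemdeftwoworks} does all the work. For $\Phi\circ\Psi=\id$: if $\Psi(w)=i$, then $c_i$ is the smallest element of $\leftb{\diag K}\setminus\ideal w$, so $w$ is itself a meet-irreducible element satisfying the defining requirement for the index $i$; by uniqueness of such an element, $w=u_i=\Phi(i)$. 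For $\Psi\circ\Phi=\id$: by definition $c_i$ is the smallest element of $\leftb{\diag K}\setminus\ideal{u_i}$, so $\Psi(u_i)=i$. Hence $\Phi$ is a bijection and $|\Mir K|=n=\length K$.

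The main obstacle --- indeed essentially the only nonformal point --- is already supplied by Lemma~\ref{lemdeftwoworks} and is therefore not something I would reprove here: one needs that for each fixed left-boundary element $c_i$ there \emph{exists} a meet-irreducible $u$ with $c_i$ least in $\leftb{\diag K}\setminus\ideal u$, and that it is \emph{unique}. Existence and meet-irreducibility come from the construction in the proof of that lemma (take the largest element of the chain $\filter{c_{i-1}}\setminus\filter{c_i}$, which is nonempty as it contains $c_{i-1}$), and uniqueness is precisely its second sentence; granting these, everything above is pure bookkeeping. Finally, I would note that $|\Mir K|$ does not depend on the chosen diagram $\diag K$, so the equality is a genuine statement about $K$ itself, which is what the corollary claims.
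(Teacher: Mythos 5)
Your proof is correct and takes essentially the same route as the paper: the paper derives this corollary directly from the second sentence of Lemma~\ref{lemdeftwoworks} (the uniqueness of $u$ in Definition~\ref{defpermmirr}), and your $\Phi$/$\Psi$ bijection argument is precisely the bookkeeping that one-line justification leaves implicit. (The paper also notes, without using it, an alternative derivation from unique irredundant meet-decompositions in convex geometries.)
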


\semmi{
\begin{proof} \piros{By semimodularity and  \eqref{eeMntcotMsttw}, $K$ is (locally) upper distributive; that is, for each $a\in K$, the interval spanned by $a$ and the join of its covers is a distributive lattice. 
Hence, by the dual of 
K.\ Adaricheva, V.\,A.\ Gorbunov and V.\,I.\ Tumanov~\cite[Theorem 1.7.(1-2)]{r:adarichevaetal}, proved originally by R.\,P.\ Dilworth~\cite{r:dilworth40}, each element of $K$ has a unique irredundant decomposition into a meet of meet-irreducible elements. 
Let $E=\set{1=e_0\succ e_1\succ\cdots\succ e_n=0}$ be a maximal chain in $K$. Let $E_i= \Mir K \cap \filter{e_i}$, for  $i\in\set{0,\ldots,n}$. 
Since $E_0=\emptyset$ and $E_n=\Mir K$, it suffices to show that $|E_i\setminus E_{i-1}|=1$ for $i\in\set{1,\ldots,n}$. Suppose to the contrary that an $i\in\set{1,\ldots,n}$ violates this equation. Clearly, there are distinct elements $x_1,x_2\in E_i\setminus E_{i-1}$. Obviously, for each $j\in\set{1,2}$,
$E_{i-1}\cup \set{x_j}$ has a minimal subset $A_j$ whose meet is $e_i$. Since $e_i=\bigwedge A_j$ is an irredundant decomposition, the uniqueness mentioned above yields that $A_1=A_2$. This contradicts $x_1\in A_1\setminus A_2$.}
\end{proof}
}

The third way of defining a permutation  is more complicated than the other two. However, it will play the main role in the proof of Theorem~\ref{thmmAin}. The prerequisites below are taken from \czgonly\cite{r:czg-mtx} and \czgsch{}\cite{rczgschhowto}. 

By a \emph{grid} we mean the direct product of two finite chains. If these chains are of the same size, then we speak of a \emph{square grid}. If $G$ is a square grid, then the elements of its lower left boundary and those of the lower right boundary are denoted by 
\begin{equation}\label{egRidnostdnIon}
C=\set{0=c_0\prec c_1\prec\cdots\prec c_n},\qquad 
D=\set{0=d_0\prec d_1\prec\cdots\prec d_n}\text, 
\end{equation}
respectively, and we say that $G$ is the square grid of length $2n$; see Figure~\ref{figureone} for $n=8$.
Note that each element of the grid can be written uniquely in the form $c_i\vee d_j$ where $i,j\in\set{0,\ldots,n}$. For lattices $M_1$ and $M_2$, a join-(semilattice)-homomorphism 
$\phi\colon M_1\to M_2$ is called \emph{cover-preserving} if $x\prec y$ implies that $\phi(x)\preceq\phi(y)$, for all $x,y\in M_1$. Kernels of this sort of homomorphisms are called \emph{cover-preserving join-congruences}. 

Let $M$ be a slim semimodular lattice, and let $u\in M$. If there is a unique $4$-cell whose top, resp.\ bottom, is $u$, then it is denoted by $\celldn u$, resp.\ $\cellup u$. Take a $4$-cell $B=\set{\bote B=a\wedge b,a,b,\tope B=a\vee b}$ of $M$. 
Then $B=\cellup{\bote B}=\cellup{a\wedge b}$ by \eqref{eeMntcotMsttw}, but the notation $\celldn{\tope B}$ is not always allowed. 
Consider a  join-congruence $\balpha $ of $M$. 
We say that $B$ is an \emph{$\balpha $-forbidden $4$-cell} if  the $\balpha $-classes 
$\blokk a\balpha $, $\blokk b\balpha $ and $\bblokk {a\wedge b}\balpha $ are
pairwise distinct but either $\bblokk {a\vee b}\balpha  = \blokk a\balpha $ or $\bblokk {a\vee b}\balpha  = \blokk b\balpha $. Recall from \czgsch\cite{rczgschhowto} that, for any join-congruence $\balpha $ of $M$, 
\begin{equation}\label{ecoprecoiffnoforbs} 
\balpha \text{ is cover-preserving if{}f }M\text{ does not have an }\balpha \text{-forbidden }4\text{-cell.}
\end{equation}
If $\set{a,b}\subseteq \bblokk{a\vee b}\balpha \not\ni a\wedge b$, then $B$ is called a \emph{source cell} of $\balpha $. The set of source cells of $\balpha $ is denoted by $\scells \balpha $. The source cells are usually shaded grey. We are now ready to formulate

\begin{definition}\label{defpermmtxos}
We associate a permutation $\pi_3\in S_n$ with ${\diag L}$ as follows; see Figure~\ref{figureone} for an illustration. Let $G=\leftb{\diag L}\times\rightb{\diag L}$. Let us agree that $\leftb{\diag L}$ and $\rightb{\diag L}$ are (identified with) the lower left boundary and the lower right boundary of $G$, respectively. Using the notation \eqref{eleriboudelesnot}, the kernel of the join-homomorphism $\eta\colon G\to L$, defined by $c_i\joing d_j\to c_i\joinl d_j$, will be denoted by $\bbeta_{\diag L}$. 
For $i\in \set{1,\ldots,n}$, we define $j=\pi_3(i)$ by the property that $\celldn{c_i\joing   d_j}\in \scells {\bbeta_{\diag L}}$. 
\end{definition}

\begin{lemma}\label{defperMbymtx}
$\pi_3$ is uniquely defined and belongs to $S_n$. 
\end{lemma}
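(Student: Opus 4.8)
The plan is to reduce the statement to two facts about the grid $G=\leftb{\diag L}\times\rightb{\diag L}$ and the join-homomorphism $\eta$: that each $i$ admits \emph{at most} one admissible $j$ (uniqueness of $\pi_3$) and \emph{at least} one (total definedness), after which $\pi_3\in S_n$ will follow. First I would record the two properties of $\eta$ that make it tractable. Since every element of $G$ is uniquely $c_i\joing d_j$ and since $\leftb{\diag L},\rightb{\diag L}$ are chains, $\eta(c_i\joing d_j)=c_i\joinl d_j$ is a join-homomorphism, and it is onto by \eqref{ejirsubbound}. Moreover, if $x\prec y$ in $G$ then $y$ arises from $x$ by one covering step $c_{s-1}\prec c_s$ or $d_{t-1}\prec d_t$, so semimodularity of $L$ gives $\eta(x)\preceq\eta(y)$; hence $\eta$ is cover-preserving, $\bbeta_{\diag L}$ is a cover-preserving join-congruence, and by \eqref{ecoprecoiffnoforbs} there are no forbidden $4$-cells. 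Unwinding the source-cell definition, $\celldn{c_i\joing d_j}$ (bottom $c_{i-1}\joing d_{j-1}$, top $c_i\joing d_j$) lies in $\scells{\bbeta_{\diag L}}$ exactly when $c_{i-1}\joinl d_j=c_i\joinl d_{j-1}=c_i\joinl d_j>c_{i-1}\joinl d_{j-1}$, i.e.\ both upper edges collapse under $\eta$ while the bottom does not.

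For uniqueness I would prove a monotonicity: for fixed $i$, once the $c$-edge collapses it stays collapsed. Indeed $c_{i-1}\joinl d_j=c_i\joinl d_j$ means $c_i\leq c_{i-1}\joinl d_j$, whence $c_i\leq c_{i-1}\joinl d_{j'}$ and so $c_{i-1}\joinl d_{j'}=c_i\joinl d_{j'}$ for all $j'\geq j$. This collapse fails at $j=0$ (the edge maps to $c_{i-1}\prec c_i$) and holds at $j=n$, so there is a least level $j^{*}(i)$ where it first collapses. A source cell in row $i$ forces the $c$-edge at level $j$ to collapse and the one at level $j-1$ not to, i.e.\ $j=j^{*}(i)$; hence there is at most one source cell in row $i$, which settles uniqueness and shows $\pi_3(i)$, when defined, equals $j^{*}(i)$.

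The hard part is existence: for each $i$ I must exhibit a source cell in row $i$, and here I would invoke the meet-irreducible from Lemma~\ref{lemdeftwoworks}. Let $u=\max B_i$ with $B_i=\filter{c_{i-1}}\setminus\filter{c_i}$, so $u\in\Mir L$, $c_{i-1}\leq u\not\geq c_i$, and with $j:=\pi_2(i)$ one has $d_{j-1}\leq u\not\geq d_j$. The decisive observation is that $u$ is the bottom vertex of the candidate cell: from $c_{i-1},d_{j-1}\leq u$ we get $c_{i-1}\joinl d_{j-1}\leq u$, while if this were strict then maximality of $u$ in $B_i$ together with $c_i\not\leq u$ (which gives $c_i\not\leq c_{i-1}\joinl d_{j-1}$) would be contradicted; thus $u=c_{i-1}\joinl d_{j-1}$. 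Now semimodularity yields $u=c_{i-1}\joinl u\preceq c_i\joinl u$, and since $c_i\not\leq u$ this is the unique cover $u^{+}$ of the meet-irreducible $u$; likewise $d_j\joinl u=u^{+}$. Substituting $u=c_{i-1}\joinl d_{j-1}$ gives $c_i\joinl d_{j-1}=c_{i-1}\joinl d_j=c_i\joinl d_j=u^{+}>u=c_{i-1}\joinl d_{j-1}$, which is precisely the source-cell condition. Hence $\celldn{c_i\joing d_j}\in\scells{\bbeta_{\diag L}}$, so row $i$ carries a source cell at column $\pi_2(i)$. With the uniqueness above this forces $\pi_3(i)=\pi_2(i)$ for every $i$, and therefore $\pi_3=\pi_2\in S_n$ by Lemma~\ref{lemdeftwoworks}.

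I expect the genuine obstacle to be exactly this existence step: guessing the correct column and checking that \emph{both} upper edges of the cell collapse while the bottom stays strictly below. The idea that unlocks it is to recognize the $\pi_2$-meet-irreducible $u$ as the bottom vertex $c_{i-1}\joinl d_{j-1}$ of the source cell, so that its unique cover $u^{+}$ becomes the common $\eta$-value of the other three vertices; semimodularity is precisely what forces $c_i\joinl u$ and $d_j\joinl u$ to cover $u$. A welcome by-product is the identity $\pi_3=\pi_2$, which should feed directly into the later comparison of the three permutations.
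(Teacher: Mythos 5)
Your argument is correct, but the existence half follows a genuinely different route from the paper's. The paper never leaves the grid: it quotes that $\bbeta_{\diag L}$ is cover-preserving, derives the covering chains \eqref{ecidjseMoDbEtakd} and the top-row collapse \eqref{eneNWbchocclpsd} from $\blength{\quotientalg G{\bbeta_{\diag L}}}=n$, and then locates the unique switching index $j$ in the row of prime intervals $[c_{i-1}\vee d_t, c_i\vee d_t]$, where the absence of $\bbeta_{\diag L}$-forbidden $4$-cells (via \eqref{ecoprecoiffnoforbs}) converts the switch into a source cell; left-right symmetry then gives bijectivity. You instead import the meet-irreducible $u=\max B_i$, $B_i=\filter{c_{i-1}}\setminus\filter{c_i}$, from Lemma~\ref{lemdeftwoworks} and exhibit the source cell directly as $\celldn{c_i\joing d_{\pi_2(i)}}$, with the unique cover $u^+$ of $u$ as the common image of its other three vertices. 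This buys you the identity $\pi_3=\pi_2$ for free (one direction of Proposition~\ref{prdiagLpermalleqal}, which the paper proves separately and in the opposite direction), at the cost of making Lemma~\ref{defperMbymtx} depend on Lemma~\ref{lemdeftwoworks}; that dependence is harmless since the latter precedes it. One local repair is needed: your justification of $u=c_{i-1}\joinl d_{j-1}$ does not work as written --- if $c_{i-1}\joinl d_{j-1}$ were strictly below $u$, the maximality of $u$ in $B_i$ would not be violated, since $c_{i-1}\joinl d_{j-1}$ lies in $B_i$ anyway. The correct argument is the one already used in the proof of Lemma~\ref{lemdeftwoworks}: by \eqref{ejirsubbound} write $u=c_s\joinl d_t$; then $c_i\not\leq u$ and $d_j\not\leq u$ force $s\leq i-1$ and $t\leq j-1$, whence $u\leq c_{i-1}\joinl d_{j-1}\leq u$. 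Everything downstream of that equality (semimodularity giving $u\prec c_i\joinl u=d_j\joinl u=u^+$, hence the source-cell condition) is sound, as is your per-row uniqueness via monotonicity, which matches the paper's \eqref{iFjOKtNnfOAlToS}.
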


\begin{proof} 
Note that the quotient join-semilattice $\quotientalg  G{\bbeta_{\diag L}}$ is actually a lattice since it is a finite join-semilattice with 0. 
Note also that $\quotientalg G{\bbeta_{\diag L}}\cong L$ by the Homomorphism Theorem, see S.\ Burris and H.\,P.\ Sankappanavar~\cite[Thm.\ 6.12]{r-burrissankap}. 
Since $\eta$ acts identically on $\leftb{\diag L}$, the  $\bbeta_{\diag L}$-classes $\blokk {c_i}{\bbeta_{\diag L}}$, $i=0,\ldots, n$, are pairwise distinct. 
We know from  \czgsch{}\cite[proof of Cor.\ 2]{rczgschhowto} that  $\bbeta_{\diag L}$ is cover-preserving. (Note that we know that  $L$ is a cover-preserving join-homomorphic image of a grid also from G.\ Gr\"atzer and E.\ Knapp~\cite{r:GratzerKnappAU} and M.\ Stern~\cite{r:stern}.)
Hence we conclude that 
\begin{align}
\begin{aligned}
\label{ecidjseMoDbEtakd}
\blokk {c_0}\bbeta_{\diag L} & \prec \blokk {c_1}\bbeta_{\diag L}\prec \cdots\prec \blokk {c_n}\bbeta_{\diag L}\,\text{,}\cr
\blokk {d_0}\bbeta_{\diag L} & \prec \blokk {d_1}\bbeta_{\diag L}\prec \cdots\prec \blokk {d_n}\bbeta_{\diag L}\text. 
\end{aligned}
\end{align}
Taking into account that  $\blength{\quotientalg G{\bbeta_{\diag L}}}=\length L=n$, we obtain that 
\begin{equation}\label{eneNWbchocclpsd}
\bblokk{c_i\vee d_n}{\bbeta_{\diag L}}=\bblokk{c_n\vee d_j}{\bbeta_{\diag L}}=\blokk 1{\bbeta_{\diag L}}\text{, for all }i,j\in\set{0,\ldots,n}\text.
\end{equation}
Consider the sequence 
\[[c_{i-1}\vee d_0, c_i\vee d_0],\,\, [c_{i-1}\vee d_1, c_i\vee d_1],\,\, \ldots,\,\, [c_{i-1}\vee d_n, c_i\vee d_n]
\] of prime intervals of $G$. By \eqref{eneNWbchocclpsd} and \eqref{ecidjseMoDbEtakd},  the last member of this sequence is collapsed while the first one is not collapsed by $\bbeta_{\diag L}$. Hence there is a $j\in\set{1,\ldots, n}$ such that $(c_{i-1}\vee d_{j-1}, c_{i}\vee d_{j-1}) \notin\bbeta_{\diag L}$ but $(c_{i-1}\vee d_{j}, c_{i}\vee d_{j}) \in\bbeta_{\diag L}$. In fact, there is exactly one $j$ since, for $t=j+1,\ldots, n$,  
\begin{equation}\label{iFjOKtNnfOAlToS}
(c_{i-1}\vee d_{j}, c_{i}\vee d_{j}) \in\bbeta_{\diag L}\text{ implies that } 
(c_{i-1}\vee d_{t}, c_{i}\vee d_{t})\in\bbeta_{\diag L}
\text.
\end{equation}
By \eqref{ecoprecoiffnoforbs}, $G$ has  no $\bbeta_{\diag L}$-forbidden square. Hence we conclude that $\celldn{c_i\vee d_j}\in\scells {\bbeta_{\diag L}}$, and this $j$ is unique by \eqref{iFjOKtNnfOAlToS}. By the left-right symmetry, for each $j\in\set{1,\ldots,n}$ there is exactly one $i\in\set{1,\ldots,n}$ such that  $\celldn{c_i\vee d_j}\in\scells {\bbeta_{\diag L}}$. Hence $\pi_3$ is a uniquely defined permutation on $\set{1,\ldots,n}$.
\end{proof}

\begin{proposition}\label{prdiagLpermalleqal} Let $\pi_1$, $\pi_2$, and $\pi_3$ denote the permutations associated with $\diag L$
in Definitions~\ref{defpermone}, \ref{defpermmirr}, and \ref{defpermmtxos}, respectively. Then $\pi_1=\pi_2=\pi_3$.
\end{proposition}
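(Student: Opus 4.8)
The plan is to establish the two equalities $\pi_2=\pi_3$ and $\pi_1=\pi_3$ separately. It is convenient to abbreviate $c_i\vee d_j:=\eta(c_i\joing d_j)=c_i\joinl d_j\in L$, and to record from the proof of Lemma~\ref{defperMbymtx} (together with \eqref{iFjOKtNnfOAlToS} and its left--right dual) the following monotonicity: in the $i$-th column the prime interval $[c_{i-1}\vee d_t,\,c_i\vee d_t]$ is collapsed by $\bbeta_{\diag L}$ if and only if $t\ge\pi_3(i)$, the jump taking place exactly at the source cell $\celldn{c_i\joing d_{\pi_3(i)}}$; dually for the rows.

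I first prove $\pi_2=\pi_3$. Fix $i$, put $j=\pi_2(i)$, and let $u$ be the (unique) meet-irreducible witness of Definition~\ref{defpermmirr}, with unique upper cover $u^\ast$. The proof of Lemma~\ref{lemdeftwoworks} shows $u=c_{i-1}\vee d_t$ for some $t$; since $d_{j-1}\le u\not\ge d_j$, this forces $u=c_{i-1}\vee d_{j-1}$. Now I apply semimodularity to the covers $c_{i-1}\prec c_i$ and $d_{j-1}\prec d_j$: we have $u=u\vee c_{i-1}\preceq u\vee c_i=c_i\vee d_{j-1}$, and the inequality is strict because $c_i\not\le u$, so $c_i\vee d_{j-1}=u^\ast$; symmetrically $c_{i-1}\vee d_j=u^\ast$, whence also $c_i\vee d_j=u^\ast$. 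Thus in the $4$-cell $\celldn{c_i\joing d_j}$ the two sides $c_i\joing d_{j-1}$, $c_{i-1}\joing d_j$ and the top $c_i\joing d_j$ all lie in the single $\bbeta_{\diag L}$-class $u^\ast$, while the bottom maps to $u\ne u^\ast$. This is precisely the defining property of a source cell, so $\celldn{c_i\joing d_j}\in\scells{\bbeta_{\diag L}}$ and $\pi_3(i)=j$. Since $\pi_2$ is a well-defined map and $i$ was arbitrary, $\pi_2=\pi_3$.

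For $\pi_1=\pi_3$ the decisive auxiliary fact is that $\eta$ preserves perspectivity as long as nothing is collapsed: if $[p,q]$ and $[r,s]$ are opposite edges of a $4$-cell of the grid, with $q\vee r=s$, and neither edge is collapsed, then $\{\eta(p),\eta(q),\eta(r),\eta(s)\}$ is a covering square of $L$, so $\eta[p,q]$ and $\eta[r,s]$ are perspective. This follows from $\eta(q)\vee\eta(r)=\eta(s)$ together with cover-preservation and semimodularity. Consequently the non-collapsed $\eta$-images of any one trajectory of the grid lie on a single trajectory of $\diag L$. The $i$-th column of the grid is a trajectory whose non-collapsed part is $t=0,\dots,\pi_3(i)-1$ (consecutive, by the monotonicity above) and which contains $[c_{i-1},c_i]$; symmetrically, the grid trajectory through the right-boundary interval $[d_{\pi_3(i)-1},d_{\pi_3(i)}]$ has non-collapsed part consisting of the $D$-edges in columns $0,\dots,i-1$. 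The source-cell computation of the previous paragraph shows that these two grid trajectories are identified by $\eta$ at the single prime interval $[u,u^\ast]=\eta[c_{i-1}\joing d_{j-1},\,c_i\joing d_{j-1}]=\eta[c_{i-1}\joing d_{j-1},\,c_{i-1}\joing d_j]$, so their images lie on one and the same trajectory of $\diag L$. That trajectory then contains both $[c_{i-1},c_i]$ and the unique right-boundary interval $[d_{\pi_3(i)-1},d_{\pi_3(i)}]$, whence $\pi_1(i)=\pi_3(i)$.

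The steps I regard as routine are the consecutiveness of the non-collapsed images of a grid trajectory (immediate from the monotonicity) and the standard fact that a trajectory of $\diag L$ reaches the right boundary in exactly one prime interval. I expect the real work to be concentrated in the perspectivity-preservation fact and, building on it, in verifying that the trajectory of $\diag L$ issuing from $[c_{i-1},c_i]$ is precisely the concatenation of the two grid trajectories glued at the source cell; once this gluing is established, the equality $\pi_1=\pi_3$ is forced.
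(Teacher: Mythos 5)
Your proposal is correct in substance, and its second half is a genuinely different argument from the paper's. For $\pi_2=\pi_3$ you run the paper's computation in the opposite direction: the paper starts from a source cell, sets $u=c_{i-1}\joinl d_{j-1}$, and shows that $u$ is meet-irreducible with $c_i$ and $d_j$ the boundary witnesses of Definition~\ref{defpermmirr}; you start from the meet-irreducible witness $u$, identify it as $c_{i-1}\joinl d_{j-1}$, and use semimodularity plus the uniqueness of the upper cover $u^\ast$ to force $c_i\joinl d_{j-1}=c_{i-1}\joinl d_j=c_i\joinl d_j=u^\ast$, which is exactly the source-cell condition. Since both maps are already known to be well defined, either direction suffices. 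For the trajectory permutation the paper proves $\pi_1=\pi_2$: it locates the turning point $x_k$ of the trajectory via \cite[Lemmas 11 and 12]{rczgschjordhold} and then proves $x_k\in\Mir L$ by a planarity argument (positions relative to maximal chains, Kelly--Rival). You instead prove $\pi_1=\pi_3$ entirely inside the grid, pushing the $i$-th column and the $j$-th row forward along $\eta$ and gluing their images at the source cell. This avoids both the external trajectory lemmas and the planarity argument, at the price of your perspectivity-preservation lemma.

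That lemma is the one place needing repair. As stated it is false: if the transverse pair of edges of the grid cell is collapsed, i.e.\ $\eta(p)=\eta(r)$, then $\eta(s)=\eta(q)\vee\eta(r)=\eta(q)$, so the image of the cell is a two-element chain rather than a covering square, even though neither $[p,q]$ nor $[r,s]$ is collapsed; this degenerate case genuinely occurs (it happens exactly when the trajectory of $\diag L$ ``stalls'' inside a column or row of the grid). Fortunately the consequence you actually use survives: in the degenerate case $\eta[p,q]$ and $\eta[r,s]$ are the \emph{same} prime interval of $L$, so they trivially lie on one trajectory of $\diag L$; in the nondegenerate case your argument does produce a covering square, hence a $4$-cell of $\diag L$ by \eqref{eslfcelltsmecosQre}, with the two images as opposite edges. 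Restate the lemma as this dichotomy and the gluing argument, together with the consecutiveness of the non-collapsed parts that you already noted, carries $\pi_1=\pi_3$ through.
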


For $\pi_1=\pi_2=\pi_3$, we use the notation 
$\pi=\pi_{\diag L}$.

\begin{proof}[Proof of Proposition~\ref{prdiagLpermalleqal}] Assume that $j=\pi_3(i)$, that is, 
 $\celldn{c_i\joing  d_j}$ belongs to $\scells {\bbeta_{\diag L}}$. 
Let $u:=c_{i-1}\joinl d_{j-1}$ and $v:=c_{i}\joinl d_{j}$ (in $L$).
By the definition of $\eta$ and $\bbeta_{\diag L}$,   this means that
\begin{equation}\label{esiIWcjqpQlS}
u\neq v=c_{i-1}\joinl d_j = c_i\joinl d_{j-1}\text.
\end{equation}
Assume that $x\in L$ such that $u<x$. We know from \eqref{ejirsubbound} that $x$ is of the form $c_s\joinl d_t$. Since $x=u\joinl x$, we can assume that $i-1\leq s$ and $j-1\leq t$.
Hence \eqref{esiIWcjqpQlS} yields that $v\leq x$. 
This means that $v$ is the only cover of $u$, whence  $u\in \Mir L$. If  $c_i\leq u$, then 
\[u=c_i\joinl u=c_i\joinl c_{i-1}\joinl d_{j-1}=  c_i\joinl d_{j-1}\]  contradicts \eqref{esiIWcjqpQlS}. Therefore, $c_i$ is the smallest element of $\leftb{\diag L}\setminus \ideal u$. Similarly, $d_j$ is the smallest element of  $\rightb{\diag L}\setminus \ideal u$. Hence $j=\pi_2(i)$. Thus, $\pi_2$ equals $\pi_3$.

Next, assume that $j=\pi_1(i)$. Consider the trajectory $I_0,\ldots,I_m$ as in Definition~\ref{defpermone}. For $t=0,\ldots,m$, let 
$x_t$ and $y_t$ denote the bottom and the top of $I_t$, respectively. That is, $I_t=[x_t,y_t]$.  By \czgsch{}\cite[Lemmas 11 and 12]{rczgschjordhold}, there is a $k\in\set{0,\ldots,m}$ such that 
\begin{equation}\label{ewHretTrajTurndwn}
y_k=c_i\vee x_k,\quad c_{i-1}= c_i\wedge  x_k,\quad  
y_k=d_j\vee x_k,\quad d_{j-1}= d_j\wedge  x_k\text.
\end{equation}

We claim that $x_k$ is meet-irreducible.
If $m=0$, then $[c_{i-1},c_i]=[d_{i-1},d_i]=[x_k,y_k]\subseteq \Nar L$, whence $x_k\in \Mir L$. Hence we can assume that $m\geq 1$. Observe that  $y_k$ is join-reducible by \eqref{ewHretTrajTurndwn}. 
If $k\in\set{0,m}$, then 
$I_k$ is on the boundary of $L$, and  
the join-reducibility of $y_k$ together with
\czgsch{}\cite[Lemma 4]{r:czg-sch-visual} yields that $x_k\in\Mir L$. Hence we can assume that $k\in\set{1,\ldots,m-1}$. Then, by \eqref{eslfcelltsmecosQre}, we have two adjacent  $4$-cells: $B'=\set{x_{k-1}, y_{k-1}, x_k, y_k}$ and $B''=\set{x_{k}, y_{k}, x_{k+1}, y_{k+1}  }$. Suppose that $x_k$ is meet-reducible. Then it has a cover $v$ that is distinct from $y_k$. Clearly, $v\notin \ideal{y_k}$.   In the diagram $\diag L$, let $X_a=\leftb {\ideal{y_k}}$ and  $X_b=\rightb {\ideal{y_k}}$. Fix a maximal chain $X_0$ in $\filter{y_k}$. Then $Y_a:=X_a\cup X_0$ and $Y_b:=X_b\cup X_0$ are maximal chains of $\diag L$. For each maximal chain $Y$ and each $y\in \diag L$, exactly one of the following three possibilities holds: $y$ is strictly on the left of $Y$, or $y$ is strictly on the right of $Y$, or $y\in Y$.

Since  $v\parallel y_k$ and thus $v\notin Y_a\cup Y_b$, $v$ is either strictly on the left or strictly on the right of $Y_a$, and the same holds for $Y_b$.
If $v$ is both strictly on the right of $Y_a$ and  strictly on the left of $Y_b$, then $v\in \ideal {y_k}$ is a contradiction. Hence, by the left-right symmetry, we can assume that $v$ is strictly on the right of $Y_b$. 
However, $x_k$ is \emph{strictly} on the left of $Y_b$ since  $B'$ and $B''$ are adjacent $4$-cells. Therefore, see 
D.\ Kelly and 
I.\ Rival~\cite[Lemma 1.2]{r:kelly-rival},  there is a $w\in Y_b$ such that $x_k < w < v$. This contradicts that $x_k\prec v$, proving that $x_k$ is meet-irreducible.

Finally, \eqref{ewHretTrajTurndwn} implies that $c_i$ and $d_j$ are the smallest elements of $\leftb{\diag L}\setminus\ideal{x_k}$ and $\rightb{\diag L}\setminus\ideal{x_k}$, respectively. Hence $j=\pi_2(i)$, proving that $\pi_1$ equals $\pi_2$.
\end{proof}

The dual $\bigl(\Halocs(\vakvec H,\vakvec K); \osups\bigr)$   of the lattice  $\Halocs(\vakvec H,\vakvec K)= \bigl(\Halocs(\vakvec H,\vakvec K); \osubs\bigr)$  will be denoted by $\dual{\Halocs(\vakvec H,\vakvec K)}$.
By \czgsch{}\cite{rczgschjordhold}, or by Lemma~\ref{ldCoPSetwChAmsW}\eqref{ldCoPSetwChAmsc}, there is a unique diagram 
$\bigdiah{\dual{\Halocs(\vakvec H,\vakvec K)}}$
in $\bigDiag{\dual{\Halocs(\vakvec H,\vakvec K)}}$ whose left boundary chain and right boundary chain are  $\vakvec H$ and $\vakvec K$, respectively. 
Since $\pi=\pi_1$, the following remark is evident by \czgsch{}\cite{rczgschjordhold}.

\begin{remark}\label{whIsPefRjHOdr}
The unique permutation that establishes a down-and-up projective matching between the composition series $\vakvec H$ and $\vakvec K$ mentioned in the Introduction  is the permutation associated with 
$\bigdiah{\dual{\Halocs(\vakvec H,\vakvec K)}}$.
\end{remark}

\section{The main result}

Assume that $L$ is a slim semimodular lattice and $\diag L\in\Diag L$. 
Let $u\leq v$ be narrows.  If we reflect $\diag{[u,v]}$ vertically  while keeping the rest of the diagram $\diag L$ unchanged, we  obtain, as a rule, another planar diagram of $L$ that determines a  different permutation. In particular, if $u=0$ and $v=1$, then we obtain the permutation $\pi^{-1}$. Hence  we cannot associate a single well-defined partition with an \emph{abstract} slim semimodular lattice $L$, in general. That is why we need the following concept.

Let $\sigma\in S_n$, and let  $I=[u,v]=\set{u,\ldots, v}$ be an interval of the chain $\set{1<\cdots <n}$. If $\sigma(i)\in I$ holds for all $i\in I$, then we say that $I$ is \emph{closed} with respect to $\sigma$. The empty subset is also called closed.
If each of  $\set{1,\ldots, u-1}$, $I$ and $\set{v+1,\ldots,n}$ is closed with respect to $\sigma$ and $I\neq \emptyset$, then $I$  is called a \emph{section} of $\sigma$.  Sections that are minimal with respect to set inclusion are called \emph{segments} of $\sigma$. 
For brevity,  sections and segments of $\sigma$ are often called \emph{$\sigma$-sections} and \emph{$\sigma$-segments}.
Let $\Seg\sigma$ denote the \emph{set of all $\sigma$-segments}. We will prove soon that $\Seg\sigma$ is a partition on $\set{1,\ldots,n}$. For $i\in \set{1,\ldots,n}$, the unique segment that contains $i$ is denoted by $\pblokk i{\sigma}$. For example, if 
\begin{equation}\label{esPlepRmstS}
\sigma=
\begin{pmatrix} 1&2&3&4&5&6&7&8&9\cr
1&7&4&5&3&6&2&9&8
\end{pmatrix}=(27)(345)(89),
\end{equation}
then  $\Seg\sigma=\bigset{\set{1},
\set{2,3,4,5,6,7},\set{8,9}}$ and $\pblokk 8{\sigma}=\set{8,9}$.   
The restriction of $\sigma$ to a subset $I$ of $\set{1,\ldots,n}$ will be denoted by 
$\restrict \sigma I$.

Next, we define a binary relation on $S_n$.  Let $\sigma,\mu\in S_n$; we say that $\sigma$ and $\mu$ are \emph{sectionally inverted} or equal, in notation $(\sigma,\mu)\in\secie$, if $\Seg \sigma=\Seg\mu$ and, for all $I\in \Seg \sigma $, $\restrict \mu I\in\set{\restrict\sigma I, (\restrict\sigma I)^{-1}}$. (The letters $\pmb\rho$, $i$ and $e$ in the notation $\secie$ come from ``relation'', ``inverted'', and ``equal'', respectively.)
To shed more light on these concepts, we present an easy lemma.

\begin{lemma}\label{lMasecSeGm} Let $\sigma,\mu\in S_n$.
\begin{enumeratei}
\item\label{lMasecSeGa} $\Seg\sigma$ is a partition on $\set{1,\ldots,n}$.
\item\label{lMasecSeGb} The intersection of any two $\sigma$-sections is either a $\sigma$-section, or empty.
\item\label{lMasecSeGc} $\sigma$-sections are the same as $($non-empty$)$ intervals that are unions of $\sigma$-segments.
\item\label{lMasecSeGd}  $(\sigma,\mu)\in\secie$ if and only if there are pairwise disjoint $\sigma$-sections $J_1,\ldots,J_t$ such that $J_1\cup\cdots\cup J_t=\set{1,\ldots,n}$   
and, for $i=1,\ldots,t$, $\restrict \mu {J_i}\in\set{\restrict\sigma {J_i}, (\restrict\sigma {J_i})^{-1}}$.
\item\label{lMasecSeGe} $\secie$ is an equivalence relation on $S_n$.
\end{enumeratei}
\end{lemma}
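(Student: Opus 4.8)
The plan is to reduce all five claims to one combinatorial observation about the \emph{cut points} of a permutation. For $\sigma\in S_n$ and $k\in\set{0,1,\ldots,n}$, call $k$ a \emph{cut} of $\sigma$ if $\sigma\bigl(\set{1,\ldots,k}\bigr)=\set{1,\ldots,k}$; note that $0$ and $n$ are always cuts, that ``closed'' in the definition of a section means exactly ``$\sigma$-invariant'' (since $\sigma$ is a bijection and the blocks are finite), and that $k$ is a cut of $\sigma$ if{f} it is a cut of $\sigma^{-1}$, because a bijection fixes a set setwise if{f} its inverse does. The first step is to establish the characterization: $[u,v]$ is a $\sigma$-section if{f} $u\le v$ and both $u-1$ and $v$ are cuts of $\sigma$. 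For this I use that $\set{1,\ldots,u-1}$, $[u,v]$ and $\set{v+1,\ldots,n}$ partition $\set{1,\ldots,n}$ into invariant candidate blocks: ``$\set{1,\ldots,u-1}$ closed'' says $u-1$ is a cut, ``$\set{v+1,\ldots,n}$ closed'' says (passing to the complement) that $v$ is a cut, and if the two outer blocks are invariant then the middle one automatically is. Writing the cuts as $0=k_0<k_1<\cdots<k_m=n$, the $\sigma$-sections are then exactly the intervals $[k_a+1,k_b]$ with $a<b$, and the minimal ones, the $\sigma$-segments, are precisely the $[k_{a-1}+1,k_a]$.

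This picture yields (i)--(iii) at once. The $m$ intervals $[k_{a-1}+1,k_a]$ are pairwise disjoint and cover $\set{1,\ldots,n}$, giving (i). For (ii), the intersection of two sections is an interval whose left end minus one is the $\max$ of two cuts and whose right end is the $\min$ of two cuts; both are again cuts, so the intersection is a section or empty. For (iii), the section $[k_a+1,k_b]$ is the union of the consecutive segments $[k_{t-1}+1,k_t]$ with $a<t\le b$, and conversely every nonempty interval that is a union of segments has this form and is therefore a section.

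For (iv) the forward implication is immediate, taking the $\sigma$-segments themselves as $J_1,\ldots,J_t$. For the converse I am handed pairwise disjoint $\sigma$-sections covering $\set{1,\ldots,n}$; being intervals, they are consecutive, say $J_i=[a_{i-1}+1,a_i]$ with $0=a_0<\cdots<a_t=n$, and the $a_i$ are cuts of $\sigma$. On each $J_i$ the map $\restrict\mu{J_i}$ equals $\restrict\sigma{J_i}$ or its inverse, and both are permutations of $J_i$, so $\mu$ also maps each $J_i$ onto itself. The crucial point is that $\sigma$ and $\mu$ have the \emph{same} cuts: for $a_{i-1}\le k\le a_i$, the number $k$ is a cut of $\sigma$ if{f} $\restrict\sigma{J_i}$ fixes the prefix $\set{a_{i-1}+1,\ldots,k}$ setwise, which (by inverse-invariance of setwise fixing) happens if{f} $\restrict\mu{J_i}$ fixes it, i.e.\ if{f} $k$ is a cut of $\mu$. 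Hence $\Seg\sigma=\Seg\mu$, and since each segment $I$ lies in a unique $J_i$ (each $J_i$ being a union of segments by (iii)), restricting the equality on $J_i$ to $I$ gives $\restrict\mu I\in\set{\restrict\sigma I,(\restrict\sigma I)^{-1}}$, using that the inverse of a restriction to an invariant subinterval is the restriction of the inverse. Finally (v) is formal: reflexivity and symmetry are clear, and transitivity follows because $\Seg$ is constant along $\secie$ while, segment by segment, the two-element set $\set{f,f^{-1}}$ is closed under inversion, so $\restrict\mu I\in\set{\restrict\sigma I,(\restrict\sigma I)^{-1}}$ together with $\restrict\tau I\in\set{\restrict\mu I,(\restrict\mu I)^{-1}}$ forces $\restrict\tau I\in\set{\restrict\sigma I,(\restrict\sigma I)^{-1}}$.

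I expect the one genuinely delicate step to be the equality of the cut sets in the converse of (iv); I would write it out carefully, since it rests on the two facts that restricting to a $\sigma$-invariant subinterval commutes with inversion and that a permutation fixes exactly the same prefixes setwise as its inverse. Everything else is bookkeeping with consecutive intervals and cut points.
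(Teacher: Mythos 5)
Your proof is correct, and it is organized around a device the paper does not use: the characterization of sections and segments via the set of \emph{cut points} $\set{k: \sigma(\set{1,\ldots,k})=\set{1,\ldots,k}}$. All the steps check out — in particular the key observations that a bijection fixes a finite set setwise if{f} its inverse does, that the middle block is automatically invariant once the two outer ones are, and that in the converse of (iv) the cut sets of $\sigma$ and $\mu$ coincide because on each $J_i$ the two restrictions fix the same prefixes setwise. The paper proceeds differently: it works directly with the three closedness conditions on $I$, $I_l$, $I_r$, proves (ii) first by noting that $(I\cap I')_l\in\set{I_l,I'_l}$ and $(I\cap I')_r\in\set{I_r,I'_r}$, derives (i) by a minimality-and-contradiction argument, and in the converse of (iv) shows that every $\sigma$-closed set $I$ is $\mu$-closed by decomposing it as $(I\cap J_1)\cup\cdots\cup(I\cap J_t)$, which makes the roles of $\sigma$ and $\mu$ symmetric. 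Your route buys a global structural picture — sections are exactly the intervals $[k_a+1,k_b]$ between cuts, segments the ones between consecutive cuts — from which (i)--(iii) fall out simultaneously and (iv) reduces to ``same cuts''; the paper's route avoids introducing the auxiliary notion and stays closer to the bare definitions, at the cost of a slightly more ad hoc argument for (i). Either proof is complete; yours is arguably the more transparent of the two.
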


For $\sigma\in S_n$, the  $\secie$-class of $\sigma$ will be denoted by $\blokk \sigma\secie$. 
 For example, if $\sigma$ is the permutation given in 
\eqref{esPlepRmstS}, then $\blokk \sigma\secie=\set{\sigma, \sigma^{-1}}$. Another example: if $n=7$,
then 
 $\blokk {(123)(567)}\secie=\bigset{(123)(567), (132)(567), (123)(576),(132)(576)}$. 
The quotient set $\bigset{\blokk \sigma\secie:\sigma\in S_n\!}$ will, of course, be denoted by $S_n/\secie$.

The \emph{class} of slim semimodular lattices of length $n$ will be denoted by $\smLat n$.
Let  $\isom$ denote isomorphism as a binary relation. For a lattice $L$, let 
$\izoclass L$ be the class of lattices isomorphic to $L$.
The quotient set 
\[\ismLat n:=\quotientset {\smLat n}{\isom}=\bigset{ \izoclass L: L\in \smLat n \!}\]
is called the \emph{set of isomorphism classes} of slim semimodular lattices of length $n$. Our goal is to establish a bijective correspondence between 
 $\ismLat n\,$ and $S_n/\secie$. This way, since we are interested in lattices only up to isomorphism,  slim semimodular lattices will be described by permutations. 

To accomplish our goal, we define four maps.
First of all, we need some notation. 
Consider the square grid $G$, see \eqref{egRidnostdnIon}. When there is no danger of confusion, we will simply write $\vee$ and $\wedge$ instead of $\joing$ and $\wedge_G$.
For $i,j\in\set{1,\ldots,n}$ and $u=c_i\vee d_j$, let $\jcongg(u)=\jcongg(c_i\vee d_j)$ denote the smallest join-congruence of $G$ that collapses $\set{c_{i-1}\vee d_j, c_i\vee d_{j-1}, c_i\vee d_j}$. 
Let 
\[\diag{\smLat n}:= \bigcup_{L\in \smLat n} \Diag L\text.
\] 
(It is a \emph{finite} set since $\ismLat n$ is finite.) 
Our maps are defined as follows.
\begin{definition}\label{mapsDefs}Let $n\in\mathbb N=\set{1,2,\ldots}$.
\begin{enumeratei}
\item\label{mapsDeffsa} For $\pi\in S_n$, let $\bbeta_\pi:=\bigvee_{i=1}^n \jcongg(c_i\vee d_{\pi(i)})$, in the congruence lattice of $(G;\vee)$. Then $\quotientalg G{\bbeta_\pi}$ is a lattice (not just a join-semilattice). By the \emph{canonical diagram} of $\quotientalg G{\bbeta_\pi}$ we mean its planar diagram $\diac{\quotientalg G{\bbeta_\pi}}$ such that 
\begin{align*}
\leftb{\diac{\quotientalg G{\bbeta_\pi}}}&=\set{\blokk{c_i}{\bbeta_\pi}: 0\leq i\leq n},\cr 
\rightb{\diac{\quotientalg G{\bbeta_\pi}}}&=\set{\blokk{d_i}{\bbeta_\pi}: 0\leq i\leq n}.
\end{align*}
(We will soon show that this makes sense.)  Let $\phi_0(\pi)=\diac{\quotientalg G{\bbeta_\pi}}$.  This defines a map 
$\phi_0\colon S_n\to \diag{\smLat n}$.
\item\label{mapsDeffsb} We define a map 
$\psi_0\colon \diag{\smLat n}\to S_n$ by $\diag L\mapsto \pi_{\diag L}$. 
\item\label{mapsDeffsc} Let $\phi\colon S_n/\secie\to \ismLat n$,  defined by  $\blokk \pi\secie\mapsto \izoclass{\biglat{\phi_0(\pi)}}$.
\item\label{mapsDeffsd}  Let $\psi\colon \ismLat n\to  S_n/\secie$, defined by  
$\izoclass L \mapsto \blokk{\psi_0({\diag L})}\secie = \blokk{\pi_{\diag L}}\secie$, where $\diag L$  denotes an arbitrarily chosen planar diagram of $L$.
\end{enumeratei}
Since $|S_0|=|S_0/\secie|=|\diag{\smLat 0}|=
|\ismLat 0| =1$,  the meaning of these maps for $n=0$ is obvious.
\end{definition}

\begin{theorem}\label{thmmAin} Slim semimodular lattices, up to isomorphism,  are characterized by 
permutations, up to the equivalence relation ``sectionally inverted or equal''. More exactly, $\phi_0$, $\phi$, $\psi_0$ and $\psi$ are well-defined maps, they are bijections, $\psi_0=\phi_0^{-1}$, and $\psi=\phi^{-1}$.
\end{theorem}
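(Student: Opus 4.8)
The plan is to establish Theorem~\ref{thmmAin} by showing that $\phi_0$ and $\psi_0$ are mutually inverse bijections at the level of diagrams, and then pushing this down to the quotient level to obtain that $\phi$ and $\psi$ are mutually inverse. First I would verify that $\phi_0$ is \emph{well-defined}: given $\pi\in S_n$, I must check that $\quotientalg G{\bbeta_\pi}$ is genuinely a slim semimodular lattice of length $n$ and that the claimed canonical diagram $\diac{\quotientalg G{\bbeta_\pi}}$ really exists with the prescribed left and right boundary chains. The key technical point is that each generator $\jcongg(c_i\vee d_{\pi(i)})$ collapses exactly one prime interval on each of the two boundary directions emanating from the cell $\celldn{c_i\vee d_{\pi(i)}}$, so that joining these $n$ source-cell congruences neither collapses any edge of $\leftb G$ nor any edge of $\rightb G$ (this is where the fact that $\pi$ is a \emph{permutation} is used — each row index $i$ and each column index $j$ is hit exactly once). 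Semimodularity of the quotient follows because $\bbeta_\pi$ is cover-preserving: by \eqref{ecoprecoiffnoforbs} I need only check that $G$ has no $\bbeta_\pi$-forbidden $4$-cell, and the source-cell structure of the generators is designed precisely to avoid this.

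Next I would show $\psi_0\circ\phi_0=\id$ on $S_n$, i.e.\ that the permutation read off from the canonical diagram $\diac{\quotientalg G{\bbeta_\pi}}$ is again $\pi$. This is where Proposition~\ref{prdiagLpermalleqal} pays off: since $\pi_1=\pi_2=\pi_3$, I may compute the recovered permutation using $\pi_3$, the source-cell definition (Definition~\ref{defpermmtxos}). By construction $\celldn{c_i\joing d_{\pi(i)}}\in\scells{\bbeta_\pi}$ for each $i$, and the uniqueness argument in the proof of Lemma~\ref{defperMbymtx} shows there is exactly one source cell in each grid-row and each grid-column; hence the source cells of $\bbeta_\pi$ are exactly $\set{\celldn{c_i\joing d_{\pi(i)}}:i=1,\ldots,n}$ and $\pi_3=\pi$. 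Conversely, for $\phi_0\circ\psi_0=\id$ on $\diag{\smLat n}$, I take an arbitrary diagram $\diag L$, set $\pi:=\pi_{\diag L}$, and must show that $\bbeta_{\diag L}=\bbeta_\pi$ and that the canonical diagram of the quotient is boundarily similar to $\diag L$. The inclusion $\bbeta_\pi\subseteq\bbeta_{\diag L}$ is immediate since each generator $\jcongg(c_i\vee d_{\pi(i)})$ is collapsed by $\bbeta_{\diag L}$ (its source cell lies in $\scells{\bbeta_{\diag L}}$ by Definition~\ref{defpermmtxos}); the reverse inclusion follows by a length count, because $\blength{\quotientalg G{\bbeta_\pi}}=n=\blength{\quotientalg G{\bbeta_{\diag L}}}$ forces the two cover-preserving congruences to coincide.

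Having the diagram-level bijection $\psi_0=\phi_0^{-1}$, I would then descend to isomorphism and $\secie$-classes. The content here is a \emph{compatibility} statement: two diagrams $\diag{L_1}$ and $\diag{L_2}$ satisfy $\biglat{\diag{L_1}}\cong\biglat{\diag{L_2}}$ if and only if $(\pi_{\diag{L_1}},\pi_{\diag{L_2}})\in\secie$. The forward direction is the geometric heart of the matter and I expect it to be the \textbf{main obstacle}: given an abstract isomorphism $L_1\cong L_2$, I must understand how the two permutations differ, and the mechanism is exactly the vertical reflection of subdiagrams over intervals between narrows described at the start of Section~3 (reflecting $\diag{[u,v]}$ over a pair of narrows inverts the restriction of $\pi$ to the corresponding $\sigma$-section while fixing the complementary sections). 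So I would prove that the distinct planar diagrams of a fixed $L$ are obtained from one another precisely by such reflections over the nontrivial narrows, that the narrows cut $\set{1,\ldots,n}$ into exactly the $\sigma$-sections, and that reflecting over a section inverts $\restrict\pi I$; Lemma~\ref{lMasecSeGm}\eqref{lMasecSeGd} then packages this as $\secie$-equivalence. The converse direction is easier: if $(\pi_{\diag{L_1}},\pi_{\diag{L_2}})\in\secie$, I build the isomorphism by reflecting the relevant sections one at a time, using that each reflection produces a boundarily similar diagram of an \emph{isomorphic} lattice. Once this compatibility is in hand, $\phi$ and $\psi$ are well-defined on the quotients, and the diagram-level identities $\psi_0=\phi_0^{-1}$ transport immediately to $\psi=\phi^{-1}$, completing the proof.
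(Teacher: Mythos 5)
Your proposal is correct and follows essentially the same route as the paper: first the diagram-level reciprocal bijections $\phi_0$ and $\psi_0$ (the paper's Lemmas~\ref{eLleGTbgRd}, \ref{lIHcidismbcHin}, \ref{LpPgrScllsbepi} and \ref{lMpsinullaNdphnuOK}), and then the descent to $\secie$-classes and isomorphism classes via the observation that all planar diagrams of $L$ arise from one another by vertically reflecting the intervals between consecutive narrows, these intervals matching the segments of the associated permutation (Lemmas~\ref{lmKoooszeRe}, \ref{lRtoSnlegsmgTzs} and \ref{ldCoPSetwChAmsW}). The one local deviation is your argument that $\bbeta_{\diag L}=\bbeta_\pi$: where the paper invokes Lemma~\ref{lMcuWeHlQtTt} to write $\bbeta_{\diag L}$ as the join of the $\jcongg(B)$ over its source cells, you deduce the reverse inclusion from $\blength{\quotientalg G{\bbeta_\pi}}=\blength{\quotientalg G{\bbeta_{\diag L}}}=n$. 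This does work, but it silently uses that for cover-preserving join-congruences $\bbeta_\pi\subseteq\bbeta_{\diag L}$ the induced surjection $\quotientalg G{\bbeta_\pi}\to\quotientalg G{\bbeta_{\diag L}}$ is itself cover-preserving and that such a surjection between finite semimodular lattices of equal length must be injective (via the Jordan--H\"older chain condition); that step should be spelled out if you take this route instead of the paper's.
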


The case $n=0$ is trivial. In what follows, we always assume that $n\in \mathbb N$. The following result is an evident consequence of Theorem~\ref{thmmAin}, Remark~\ref{whIsPefRjHOdr}, and the fact that each lattice  is determined by any diagram of its dual lattice.

\begin{corollary}\label{corcOmpseDeiTmsnd}
$\pi$ from Theorem~\ref{jhbBNbeMbzTk}
 determines the lattice 
$\Halocs(\vakvec H,\vakvec K)$, that is  $\bigl({\Halocs(\vakvec H,\vakvec K); \osubs\bigr)}$, up to lattice isomorphism.
\end{corollary}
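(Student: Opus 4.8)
The plan is to transport the statement through the duality and then read it off from the bijectivity of $\psi$ in Theorem~\ref{thmmAin}. First I would record, as already noted in the Introduction, that $\Halocs(\vakvec H,\vakvec K)$ is both dually slim and dually semimodular; consequently its dual $L:=\dual{\Halocs(\vakvec H,\vakvec K)}$ is a slim (upper) semimodular lattice, of length $n$, where $n$ is the common length of $\vakvec H$ and $\vakvec K$. Thus $\izoclass L\in\ismLat n$ and Theorem~\ref{thmmAin} applies to $L$. (The degenerate case $n=0$ has already been disposed of.)

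Next I would use Remark~\ref{whIsPefRjHOdr} to identify the group-theoretic object with a lattice-theoretic one. By that remark, the permutation $\pi$ delivered by Theorem~\ref{jhbBNbeMbzTk} is exactly the permutation $\pi_{\diag L}$ attached to the distinguished diagram $\diag L=\bigdiah{\dual{\Halocs(\vakvec H,\vakvec K)}}$ whose boundary chains are $\vakvec H$ and $\vakvec K$. In the language of Definition~\ref{mapsDefs}, this says $\psi_0(\diag L)=\pi$, and therefore $\psi(\izoclass L)=\blokk{\pi_{\diag L}}\secie=\blokk\pi\secie$.

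Now the main theorem does the work. Since $\psi$ is a bijection (equivalently, since $\phi=\psi^{-1}$ is a well-defined map), the $\secie$-class $\blokk\pi\secie$ determines $\izoclass L$; explicitly, $\phi(\blokk\pi\secie)=\izoclass L=\izoclass{\dual{\Halocs(\vakvec H,\vakvec K)}}$. Because $\pi$ plainly determines its own $\secie$-class $\blokk\pi\secie$, we conclude that $\pi$ determines $\dual{\Halocs(\vakvec H,\vakvec K)}$ up to lattice isomorphism. Finally, a finite lattice is recovered up to isomorphism from its dual, so $\pi$ determines $\Halocs(\vakvec H,\vakvec K)$ up to isomorphism as well, which is the assertion.

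I expect no serious obstacle, since every nontrivial ingredient is imported: the bijectivity of $\psi$ in Theorem~\ref{thmmAin} carries the entire argument, and Remark~\ref{whIsPefRjHOdr} bridges the abstract permutation $\pi$ and the diagram-dependent permutation $\pi_{\diag L}$. The single point that merits a word of care is the interplay between the well-definedness of $\psi$ and the choice of diagram: $\psi$ is evaluated on the isomorphism class $\izoclass L$ through an \emph{arbitrary} diagram, whereas Remark~\ref{whIsPefRjHOdr} names one \emph{specific} diagram. One should note that, by the well-definedness asserted in Theorem~\ref{thmmAin}, the resulting $\secie$-class is independent of this choice, so computing $\psi(\izoclass L)$ from the distinguished diagram $\bigdiah{\dual{\Halocs(\vakvec H,\vakvec K)}}$ is legitimate.
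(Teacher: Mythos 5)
Your proposal is correct and follows exactly the route the paper intends: the paper dispatches this corollary in one sentence as an ``evident consequence'' of Theorem~\ref{thmmAin}, Remark~\ref{whIsPefRjHOdr}, and duality, and your write-up simply spells out those same three ingredients (including the worthwhile observation that the well-definedness of $\psi$ lets you evaluate it on the distinguished diagram $\bigdiah{\dual{\Halocs(\vakvec H,\vakvec K)}}$).
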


Theorem~\ref{thmmAin} will make the proof of the next corollary  quite easy.

\begin{corollary}\label{RcThTwocLsSuQal}
For each slim semimodular lattice $L$, there exist a finite cyclic group $G$ and composition series $\vakvec H$ and $\vakvec K$ of $G$ such that $L$ is isomorphic to the lattice
$\dual{\Halocs(\vakvec H,\vakvec K)}$. 
Conversely, for all groups $G$ with finite composition length and for any composition series $\vakvec H$ and $\vakvec K$ of $G$, 
$\dual{\Halocs(\vakvec H,\vakvec K)}$ 
is a slim semimodular lattice.
\end{corollary}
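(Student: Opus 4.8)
The plan is to derive this corollary from Theorem~\ref{thmmAin} together with Corollary~\ref{RcThTwocLsSuQal}'s two halves being handled separately, exploiting the bridge between slim semimodular lattices and composition-series lattices already set up in Remark~\ref{whIsPefRjHOdr}. The essential observation is that $\dual{\Halocs(\vakvec H,\vakvec K)}$ is always a slim semimodular lattice: indeed, the Introduction shows $\Halocs(\vakvec H,\vakvec K)$ is dually semimodular and dually slim, so passing to the dual yields a slim semimodular lattice. This settles the converse direction essentially for free, so I would dispose of it first in one or two sentences.

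For the forward direction, the real content, the plan is to reverse the construction that produces a permutation from a composition-series lattice. Given an arbitrary slim semimodular lattice $L$ of length $n$, Theorem~\ref{thmmAin} provides a permutation $\pi\in S_n$ with $\izoclass L=\izoclass{\biglat{\phi_0(\pi)}}$; equivalently, some diagram $\diag L$ realizes $\pi=\pi_{\diag L}$. The goal is then to manufacture a finite cyclic group $G$ together with two composition series $\vakvec H,\vakvec K$ whose associated permutation, in the sense of Remark~\ref{whIsPefRjHOdr}, is exactly $\pi$, and such that $\dual{\Halocs(\vakvec H,\vakvec K)}\cong L$. Here the cyclicity is the point that needs care: for a finite cyclic group the subgroup lattice is the divisor lattice of $|G|$, and a composition series corresponds to a maximal chain of divisors, i.e.\ to an ordering of the prime factors (with multiplicity) of $|G|$.

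Thus the concrete plan is as follows. First I choose distinct primes $p_1,\ldots,p_n$ and set $G=\mathbb Z_m$ with $m=p_1\cdots p_n$, so that $G$ is cyclic of squarefree order with exactly $n$ prime factors and composition length $n$. Subnormal subgroups here are just subgroups, indexed by the divisors of $m$, i.e.\ by subsets of $\set{p_1,\ldots,p_n}$, and the subgroup lattice is the Boolean lattice $2^{\set{1,\ldots,n}}$. A composition series is then the chain determined by a linear order in which the primes are adjoined. I would take $\vakvec H$ to correspond to the identity ordering $1,2,\ldots,n$ and $\vakvec K$ to the ordering prescribed by $\pi$; concretely $H_i=\langle p_1\cdots p_i\rangle$ and $K_j=\langle p_{\pi^{-1}(1)}\cdots p_{\pi^{-1}(j)}\rangle$, or the analogous indexing that makes the two maximal chains meet in the pattern of $\pi$. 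The intersections $H_i\cap K_j$ are the subgroups generated by the common primes, and one computes directly that $\Halocs(\vakvec H,\vakvec K)$, ordered by inclusion, is the sublattice of the Boolean lattice consisting of ``staircase'' subsets determined by $\pi$; by Remark~\ref{whIsPefRjHOdr} the permutation matching $\vakvec H$ and $\vakvec K$ is precisely $\pi$ (or $\pi^{-1}$, which lies in the same $\secie$-class after reflecting the diagram). Applying Theorem~\ref{thmmAin} to $\pi$ then identifies $\dual{\Halocs(\vakvec H,\vakvec K)}$ with $\biglat{\phi_0(\pi)}\cong L$, completing the proof.

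The main obstacle I anticipate is matching conventions precisely: the diagram $\bigdiah{\dual{\Halocs(\vakvec H,\vakvec K)}}$ has $\vakvec H$ and $\vakvec K$ as its left and right boundary chains (via the dualization, so inclusions flip to reverse inclusions), and I must verify that the permutation $\pi_1$ read off this diagram according to Definition~\ref{defpermone} genuinely equals the $\pi$ I started from, rather than its inverse or a conjugate. Since $\blokk\pi\secie=\blokk{\pi^{-1}}\secie$ for the relevant reflections, and since Theorem~\ref{thmmAin} only determines $L$ through the $\secie$-class $\blokk\pi\secie$, any such discrepancy is harmless; but I would still spell out carefully that the cyclic-group construction realizes \emph{some} representative of $\blokk\pi\secie$, which is all that is needed. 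A secondary, purely bookkeeping, point is confirming that for squarefree cyclic $G$ the lattice $\Halocs(\vakvec H,\vakvec K)$ is genuinely the claimed sublattice of the Boolean lattice and that its dual is slim semimodular, but this is routine given the divisor-lattice description.
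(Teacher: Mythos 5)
Your construction is exactly the paper's: the cyclic group of squarefree order $p_1\cdots p_n$ with $H_i$ of order $p_1\cdots p_i$ and $K_j$ of order $p_{\pi^{-1}(1)}\cdots p_{\pi^{-1}(j)}$, the converse direction dismissed via the Introduction, and the reduction to realizing a prescribed permutation via Theorem~\ref{thmmAin} and Remark~\ref{whIsPefRjHOdr}. The only divergence is the verification that the Jordan--H\"older permutation $\sigma$ of this pair of series is the intended one: you plan a direct computation of $\Halocs(\vakvec H,\vakvec K)$ as a ``staircase'' sublattice of the Boolean lattice and then hedge with the observation that landing on $\pi^{-1}$ or another member of $\blokk \pi\secie$ would be harmless. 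The paper avoids both the computation and the hedge with a one-line argument: since subnormally down-and-up projective quotients are isomorphic, $p_i=|H_i/H_{i-1}|$ must equal $|K_{\sigma(i)}/K_{\sigma(i)-1}|=p_{\pi^{-1}(\sigma(i))}$, and the distinctness of the primes forces $i=\pi^{-1}(\sigma(i))$, i.e.\ $\sigma=\pi$ exactly. Your route would also work (and your $\secie$-class observation is sound, since $(\pi,\pi^{-1})\in\secie$ always), but the factor-order argument is what makes the choice of \emph{distinct} primes do all the work, so you may as well adopt it and drop the deferred lattice computation.
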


\begin{remark}\label{reMaRidke} Associated with a permutation $\pi \in S_n$, it is convenient to consider the \emph{grid matrix} $A(\pi ):=(G;\graf\pi )$ of $\pi $, where $\graf\pi :=\set{\celldn{c_i\joing d_{\pi (i)}}: 1\leq i\leq n}$. That is, $A(\pi )$ consists of the grid together with $n$ $4$-cells determined by $\pi $. In Figure~\ref{figureone}, the elements of $\graf\pi $ are  shaded grey. 
We can use grid matrices to clarify  the definition of $\phi_0$  as follows. 
For a $4$-cell $B$ of $G$, let $\jcongg(B)$ denote  $\jcongg(1_B)$. Equivalently, $\jcongg(B)$  is the smallest join-congruence of $G$ that collapses the upper edges of $B$. Then $\bbeta_\pi=\bigvee_{B\in \graf\pi} \jcongg(B)$ and $\phi_0(\pi)=\diac{\quotientalg G{\bbeta_\pi}}$. 
\end{remark}

\begin{remark}\label{reMaiWrGRidk} 
We can use $\graf \pi$ also in connection  with Definition~\ref{defpermmtxos}.  Indeed, for   $\diag L\in \diag{\smLat n}$, $\pi_{\diag L}=\pi_3$ is defined by the property $\graf{\pi_{\diag L}\kern-2pt}= \scells {\bbeta_{\diag L}}$.
\end{remark}

\section{Auxiliary lemmas and the proof of the main result}

\begin{proof}[Proof of Lemma~\ref{lMasecSeGm}]
For an interval $J=\set{u,\ldots,v}$ of $\set{1,\ldots,n}$, we define 
\begin{equation}\label{sllrNoeshM}
J_l=\set{1,\ldots,u-1}\,\text{ and }\,J_r=\set{v+1,\ldots,n}\text. 
\end{equation}
Assume that $I$ and $I'$ are sections of $\sigma$. Then the sets $I, I_l, I_r, I', I'_l$ and $I'_r$ are $\sigma$-closed. Let $J=I\cap I'$, and suppose that it is non-empty. Since $J_l\in\set{I_l,I'_l}$ and 
$J_r\in\set{I_r,I'_r}$, the sets $J, J_l$ and $J_r$ are $\sigma$-closed. Hence $J$ is a section of $\sigma$, proving part \eqref{lMasecSeGb}.

To prove  \eqref{lMasecSeGa}, let $a\in\set{1,\ldots,n}$. By part \eqref{lMasecSeGb}, there is a minimal $\sigma$-section $I=\set{u,\ldots,v}$ such that $a\in I$. Suppose that $I$ is not a $\sigma$-segment. Then there is a $\sigma$-segment $J=\set{u',\ldots,v'}$ such that $a\notin J\subset I$. We know that $u\leq a\leq v$, but  $a<u'$ or $v'<a$. We can assume that $a<u'$ since the case $v'<a$ can be treated similarly. Let $K=\set{u,\ldots,u'-1}$, and note that $a\in K$. Since intersections and unions of $\sigma$-closed subsets are $\sigma$-closed, we conclude that 
$K=I\cap J_l$, $K_l=I_l$ and $K_r=J\cup J_r$ are $\sigma$-closed. Hence $K$ is a $\sigma$-section, which contradicts $a\in K$ and the minimality of $I$. Consequently, each $a\in\set{1,\ldots,n}$ belongs to a $\sigma$-segment. Since distinct $\sigma$-segments are obviously disjoint by part \eqref{lMasecSeGb}, part \eqref{lMasecSeGa} follows.

Part \eqref{lMasecSeGc} is an evident consequence of parts  \eqref{lMasecSeGa} and \eqref{lMasecSeGb}.

The ``only if'' direction of part  \eqref{lMasecSeGd} is obvious since we can choose $\set{J_1,\ldots,J_t}:=\Seg \sigma$. To prove the ``if'' direction, assume that there are $\sigma$-sections $J_1,\ldots,J_t$ described in part \eqref{lMasecSeGd}. 
Let $I$ be a non-empty $\sigma$-closed subset of $\set{1,\ldots,n}$. For every $i\in \set{1,\ldots,t}$, $I\cap J_i$ is $\sigma$-closed. Hence it is $\mu$-closed since $\restrict \mu{J_i}\in\set {\restrict \sigma{J_i}, (\restrict \sigma{J_i})^{-1} }$. So $I=(I\cap J_1)\cup\cdots\cup (I\cap J_t)$ is $\mu$-closed. 
This implies that $\sigma$-sections are also $\mu$-sections. In particular, $J_1,\ldots,J_t$ are $\mu$-sections, which makes the role of $\sigma$ and $\mu$ symmetric. Therefore, $\mu$-sections are the same as $\sigma$-sections, and we conclude that $\Seg \sigma=\Seg\mu$.

Next, let $I\in\Seg\sigma=\Seg\mu$. Then there is an $i\in\set{1,\ldots,t}$ such that $I\cap J_i$ is non-empty. Since 
$I\cap J_i$ is a $\sigma$-section by part \eqref{lMasecSeGb} and 
$I$ is a minimal $\sigma$-section,  
$I\subseteq J_i$. Hence $\restrict \mu I=\restrict {(\restrict \mu {J_i})}I$ belongs to $\set{\restrict \sigma I,  (\restrict \sigma I)^{-1}  }$. Thus,  $(\sigma,\mu)\in\secie$, proving part~\eqref{lMasecSeGd}. 

Finally, part \eqref{lMasecSeGe} is  obvious. 
\end{proof}

\begin{lemma}[{\czgonly\cite[(14)+Cor.\ 22]{r:czg-mtx}}]\label{lMbETapIdesCribs} Assume that $i,j\in\set{1,\ldots,n}$, and let $\pi\in S_n$.
\begin{enumeratei}
\item\label{lMbETapIjJCribsa}  Then
$(c_{i-1}\vee d_j, c_i\vee d_j)\in \bbeta_\pi$
if{}f $\pi(i)\leq j$. Similarly, $(c_{i}\vee d_{j-1}, c_i\vee d_j)\in \bbeta_\pi$
if{}f $\pi^{-1}(j)\leq i$.
\item\label{lMbETapIjJCribsb} Equivalently, $(c_{i-1}\vee d_j, c_i\vee d_j)\in \bbeta_\pi$
if{}f  $\celldn{c_i\vee d_t}\in\graf\pi$ for some $t\in\set{1,\ldots,j}$.  Similarly, $(c_{i}\vee d_{j-1}, c_i\vee d_j)\in \bbeta_\pi$
if{}f  $\celldn{c_t\vee d_j}\in\graf\pi$ for some $t\in\set{1,\ldots,i}$. 
\item\label{lMbETapIjJCribsc} In particular, $(c_{i-1},c_i)\notin \bbeta_\pi$ and $(d_{j-1},d_j)\notin \bbeta_\pi$.
\end{enumeratei} 
\end{lemma}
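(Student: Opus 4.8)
The plan is to recover $\bbeta_\pi$ from a single rank function on the grid $G$. Define
\[ f(c_i\vee d_j)\;=\;i+j-\bigl|\set{a : 1\le a\le i,\ \pi(a)\le j}\bigr| \]
(the subtracted term counts the cells of $\graf\pi$ lying weakly below and to the left of $c_i\vee d_j$). A one-line computation gives the increment property: along $[c_{i-1}\vee d_j,\,c_i\vee d_j]$ the value of $f$ increases by $1$ when $\pi(i)>j$ and is unchanged when $\pi(i)\le j$, while along $[c_i\vee d_{j-1},\,c_i\vee d_j]$ it increases by $1$ when $\pi^{-1}(j)>i$ and is unchanged otherwise. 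In particular $f$ is order preserving with range $\set{0,\dots,n}$.

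The crux is to prove that $f$ is constant on every $\bbeta_\pi$-block, i.e.\ that $(x,y)\in\bbeta_\pi$ forces $f(x)=f(y)$. Since $\bbeta_\pi=\bigvee_{i=1}^n\jcongg(c_i\vee d_{\pi(i)})$ equals the transitive closure of the union of its generators, and since the relation ``$f(x)=f(y)$'' is already transitive, it suffices to check that $f$ is constant on the blocks of each single generator $\jcongg(c_i\vee d_{\pi(i)})$. I would first determine these blocks by the evident propagation: writing $p=\pi(i)$, collapsing $(c_{i-1}\vee d_p,\,c_i\vee d_p)$ and joining with $d_b$ forces $(c_{i-1}\vee d_b,\,c_i\vee d_b)$ to collapse for every $b\ge p$, while collapsing $(c_i\vee d_{p-1},\,c_i\vee d_p)$ and joining with $c_a$ forces $(c_a\vee d_{p-1},\,c_a\vee d_p)$ to collapse for every $a\ge i$. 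A routine verification shows that the partition performing exactly these identifications (the corner triple $\set{c_{i-1}\vee d_p,\,c_i\vee d_{p-1},\,c_i\vee d_p}$, the pairs $\set{c_{i-1}\vee d_b,\,c_i\vee d_b}$ for $p<b\le n$, and the pairs $\set{c_a\vee d_{p-1},\,c_a\vee d_p}$ for $i<a\le n$, all other classes singletons) is already a join-congruence, whence it equals $\jcongg(c_i\vee d_{\pi(i)})$; and the increment property makes $f$ manifestly constant on each of these classes.

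Granting this, part (i) drops out. If $(c_{i-1}\vee d_j,\,c_i\vee d_j)\in\bbeta_\pi$ then $f(c_{i-1}\vee d_j)=f(c_i\vee d_j)$, so the increment is $0$ and hence $\pi(i)\le j$; conversely, if $\pi(i)\le j$ then the propagation above shows that already $\jcongg(c_i\vee d_{\pi(i)})\subseteq\bbeta_\pi$ collapses the interval. The second assertion of part (i) is the left--right mirror image, realised by the generator $\jcongg(c_{\pi^{-1}(j)}\vee d_j)$. Part (ii) is then a pure reformulation: $\celldn{c_i\vee d_t}\in\graf\pi$ holds precisely when $t=\pi(i)$, so ``$\celldn{c_i\vee d_t}\in\graf\pi$ for some $t\le j$'' says exactly $\pi(i)\le j$, and symmetrically in the other coordinate. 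Finally part (iii) is the boundary case: $f(c_{i-1})=i-1\ne i=f(c_i)$ and $f(d_{j-1})=j-1\ne j=f(d_j)$, so neither $(c_{i-1},c_i)$ nor $(d_{j-1},d_j)$ can lie in $\bbeta_\pi$.

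I expect the middle paragraph to be the only real obstacle. A priori the transitive closure defining $\bbeta_\pi$ could collapse prime intervals that no single generator collapses, and excluding this is precisely what the rank function $f$ buys: it turns the global claim ``$\bbeta_\pi$ collapses no unexpected edge'' into the local, generator-by-generator statement that $f$ is block-constant, where the explicit block description and the increment formula close the argument.
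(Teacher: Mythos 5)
Your argument is correct. Note, however, that the paper does not prove this lemma at all: it is imported verbatim from the cited reference (\cite[(14)+Cor.~22]{r:czg-mtx}), so there is no in-paper proof to compare with, and any self-contained derivation is by definition a different route. Your route is a clean one. The reduction ``$\ker f$ is transitive and contains every generator, hence contains $\bbeta_\pi=$ transitive closure of their union'' is valid, and your explicit description of the blocks of a single generator $\jcongg(c_i\vee d_p)$ (the corner triple $\set{c_{i-1}\vee d_p,\,c_i\vee d_{p-1},\,c_i\vee d_p}$, the doubletons $\set{c_{i-1}\vee d_b,\,c_i\vee d_b}$ for $b>p$ and $\set{c_a\vee d_{p-1},\,c_a\vee d_p}$ for $a>i$, singletons elsewhere) does check out as a join-congruence, since these classes are pairwise disjoint and joining any class with $c_a\vee d_e$ lands inside a single class; minimality then forces equality with $\jcongg(c_i\vee d_p)$. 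The increment computation $f(c_i\vee d_j)-f(c_{i-1}\vee d_j)=1-[\pi(i)\le j]$ and its mirror are right, so $f$ is constant on each generator's blocks exactly because $b\ge p=\pi(i)$ resp.\ $a\ge i=\pi^{-1}(p)$, and both directions of (i) follow (the ``if'' direction directly from the block description, the ``only if'' direction from $\bbeta_\pi\subseteq\ker f$). Parts (ii) and (iii) are indeed immediate, (iii) via $f(c_i\vee d_0)=i$ and $f(c_0\vee d_j)=j$. What your approach buys over simply quoting \cite{r:czg-mtx} is an explicit numerical invariant that simultaneously certifies all the negative statements (``$\bbeta_\pi$ collapses nothing unexpected'') without invoking the matrix machinery of that paper; the cost is the routine but unavoidable verification that your candidate partition really is the generated join-congruence.
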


\begin{lemma}\label{eLleGTbgRd} If $\pi\in S_n$, then $\quotientalg G{\bbeta_\pi}\in \smLat n$.
\end{lemma}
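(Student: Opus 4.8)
Write $M:=\quotientalg G{\bbeta_\pi}$ and let $\eta\colon G\to M$ be the canonical join-epimorphism. The plan is to check, in order, that $M$ is a lattice, that $\bbeta_\pi$ is cover-preserving, and that $M$ is slim, semimodular, and of length $n$; the explicit description of $\bbeta_\pi$ in Lemma~\ref{lMbETapIdesCribs} does almost all the work. Since $G$ is finite and $\bbeta_\pi$ is a join-congruence, $M$ is a finite join-semilattice with least element $\blokk0{\bbeta_\pi}$, hence a lattice.

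The crux is that $\bbeta_\pi$ is cover-preserving; by \eqref{ecoprecoiffnoforbs} it suffices to show that $G$ has no $\bbeta_\pi$-forbidden $4$-cell. Fix the $4$-cell $\celldn{c_i\vee d_j}$, with bottom $c_{i-1}\vee d_{j-1}$, sides $a=c_i\vee d_{j-1}$ and $b=c_{i-1}\vee d_j$, and top $c_i\vee d_j$. By Lemma~\ref{lMbETapIdesCribs}\eqref{lMbETapIjJCribsa}, its lower edges (from the bottom to $a$, resp.\ to $b$) collapse if{}f $\pi(i)\leq j-1$, resp.\ $\pi^{-1}(j)\leq i-1$, and its upper edges (from $a$, resp.\ $b$, to the top) collapse if{}f $\pi^{-1}(j)\leq i$, resp.\ $\pi(i)\leq j$. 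For the cell to be forbidden, the bottom, $a$ and $b$ must lie in three distinct $\bbeta_\pi$-classes, which forces $\pi(i)\geq j$ and $\pi^{-1}(j)\geq i$, while the top must collapse onto $a$ or onto $b$, which forces $\pi^{-1}(j)\leq i$ or $\pi(i)\leq j$. Either way $\pi(i)=j$, so \emph{both} upper edges collapse and $a$ and $b$ fall into one class, contradicting their distinctness. Hence there is no forbidden cell and $\bbeta_\pi$ is cover-preserving. This case analysis, although short once Lemma~\ref{lMbETapIdesCribs} is available, is the main obstacle.

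It remains to harvest the three properties. For slimness, recall that under a surjective join-homomorphism of finite lattices every join-irreducible of the image is the image of a join-irreducible; since $\Jir G=\set{c_1,\ldots,c_n}\cup\set{d_1,\ldots,d_n}$ is a union of two chains and $\eta$ is order-preserving, $\Jir M$, and therefore $\Jirn M$, is a union of two chains, so $M$ is slim by Dilworth's criterion \cite{r:Dilw-paper}. For semimodularity I would invoke the known fact that a cover-preserving join-homomorphic image of a grid is semimodular (G.\ Gr\"atzer and E.\ Knapp~\cite{r:GratzerKnappAU}, M.\ Stern~\cite{r:stern}, and \czgsch\cite{rczgschhowto}). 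Finally, by cover-preservation and Lemma~\ref{lMbETapIdesCribs}\eqref{lMbETapIjJCribsc} the classes $\blokk{c_0}{\bbeta_\pi}\prec\cdots\prec\blokk{c_n}{\bbeta_\pi}$ are pairwise distinct covers, while all $d$-edges above $c_n$ collapse by Lemma~\ref{lMbETapIdesCribs}\eqref{lMbETapIjJCribsa}, so $\blokk{c_n}{\bbeta_\pi}=\blokk1{\bbeta_\pi}$; thus this is a maximal chain from $0_M$ to $1_M$ of length $n$, and since finite semimodular lattices are graded, $\length M=n$. Therefore $M\in\smLat n$.
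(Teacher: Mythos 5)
Your proof is correct and follows essentially the same route as the paper's: it rules out $\bbeta_\pi$-forbidden $4$-cells via \eqref{ecoprecoiffnoforbs} and the edge-collapsing criteria of Lemma~\ref{lMbETapIdesCribs} (forcing $\pi(i)=j$ and then a contradiction with the distinctness of the two middle classes), and then obtains semimodularity, slimness, and length $n$ by the same citations and the same chain $\blokk{c_0}{\bbeta_\pi}\prec\cdots\prec\blokk{c_n}{\bbeta_\pi}=\blokk{1}{\bbeta_\pi}$. The only differences are cosmetic: you carry out the forbidden-cell case analysis symmetrically rather than reducing to one case "by symmetry", and you spell out the standard argument that slimness is preserved under surjective join-homomorphisms instead of citing it.
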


\begin{proof} Suppose that $\bbeta_\pi$ is not cover-preserving. Then, by \eqref{ecoprecoiffnoforbs}, there are $i,j\in\set{1,\ldots,n}$ such that $\celldn{c_i\vee d_j}$ is a $\bbeta_\pi$-forbidden $4$-cell of $G$. By symmetry, we can assume that $(c_i\vee d_{j-1}, c_i\vee d_j)\in\bbeta_\pi$. By Lemma~\ref{lMbETapIdesCribs}, $\pi^{-1}(j)\leq i$. 
Since $\celldn{c_i\vee d_j}$ is  $\bbeta_\pi$-forbidden, $(c_{i-1}\vee d_{j-1}, c_{i-1}\vee d_j)\notin\bbeta_\pi$. Using Lemma~\ref{lMbETapIdesCribs} again, we obtain that $\pi^{-1}(j)\not\leq i-1$. Hence $\pi^{-1}(j)= i$, that is $\pi(i)=j$. 
Again by  Lemma~\ref{lMbETapIdesCribs}, we infer that $(c_{i-1}\vee d_j, c_i\vee d_j)\in \bbeta_\pi$, which is a contradiction since $\celldn{c_i\vee d_j}$ is a $\bbeta_\pi$-forbidden $4$-cell. This proves that $\bbeta_\pi$ is a cover-preserving join-congruence. Since quotient lattices of finite semimodular lattices modulo cover-preserving join-congruences are semimodular by G.\ Gr\"atzer and E.\ Knapp~
\cite[Lemma 16]{r:GratzerKnapp1}, it follows that $\quotientalg G{\bbeta_\pi}$ is semimodular. Obviously (see also 
\cite[first paragraph of Section 2]{r:czg-mtx}), 
 slimness is preserved under forming join-homomorphic images, whence $\quotientalg G{\bbeta_\pi}$ is slim.  
The rest of the proof is also based on Lemma~\ref{lMbETapIdesCribs}. Since $\blokk{c_{i-1}}{\bbeta_\pi}\neq \blokk{c_i}{\bbeta_\pi}$, $\blokk{d_{i-1}}{\bbeta_\pi}\neq \blokk{d_i}{\bbeta_\pi}$, and $\bbeta_\pi$ is cover-preserving, 
\begin{equation}\label{eThEciperBtacoChns}
\blokk 0{\bbeta_\pi}=\blokk{c_0}{\bbeta_\pi}\prec \cdots\prec\blokk{c_n}{\bbeta_\pi}
,\qquad
\blokk 0{\bbeta_\pi}=\blokk{d_0}{\bbeta_\pi}\prec \cdots\prec\blokk{d_n}{\bbeta_\pi}
\text. 
\end{equation}
It follows from $\pi^{-1}(j)\leq n$ that
$\bblokk{c_n\vee d_{j-1}}{\bbeta_\pi}= \bblokk{c_n\vee d_{j}}{\bbeta_\pi}$ for all $j\in \set{1,\ldots,n}$. By transitivity, 
$\blokk{c_n}{\bbeta_\pi}= \bblokk{c_n\vee d_{0}}{\bbeta_\pi} = \bblokk{c_n\vee d_{n}}{\bbeta_\pi}= \blokk{1}{\bbeta_\pi}$. Hence $\blength {\quotientalg G{\bbeta_\pi}}=n$, and $\quotientalg G{\bbeta_\pi}\in \smLat n$.
\end{proof}

\begin{lemma}\label{lmKoooszeRe}
For every $k\in\set{1,\ldots,n}$, $(c_k,d_k)\in\bbeta_\pi$ if{}f $k$ is the largest element of  $\pblokk k{\pi}$.
\end{lemma}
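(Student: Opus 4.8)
The plan is to recast the statement in terms of the single combinatorial condition
\[
\pi(\set{1,\ldots,k})=\set{1,\ldots,k},
\]
and to check separately that this condition is equivalent to each side of the claimed bi-implication. First I would verify that $\pi(\set{1,\ldots,k})=\set{1,\ldots,k}$ holds if{}f $k$ is the largest element of $\pblokk k\pi$. Indeed, $\pi$ mapping $\set{1,\ldots,k}$ onto itself is exactly the assertion that the interval $[1,k]$ is closed with respect to $\pi$; then $\set{k+1,\ldots,n}$ and $\emptyset$ are closed too, so $[1,k]$ is a $\pi$-section. By Lemma~\ref{lMasecSeGm}\eqref{lMasecSeGc}, the $\pi$-sections are precisely the non-empty intervals that are unions of $\pi$-segments, so $[1,k]$ is such a union, which forces $k=\max\pblokk k\pi$. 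The converse is read off the same characterization, using that by Lemma~\ref{lMasecSeGm}\eqref{lMasecSeGa} the segments are consecutive intervals partitioning $\set{1,\ldots,n}$, so the segments with maximum $\le k$ have union exactly $[1,k]$.

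The core step is to show $(c_k,d_k)\in\bbeta_\pi$ if{}f $\pi(\set{1,\ldots,k})=\set{1,\ldots,k}$, and the idea is to route everything through the grid element $c_k\vee d_k$. For the ``if'' direction, assume the closure condition. For each $i\le k$ we have $\pi(i)\le k$, so Lemma~\ref{lMbETapIdesCribs}\eqref{lMbETapIjJCribsa} gives $(c_{i-1}\vee d_k,\,c_i\vee d_k)\in\bbeta_\pi$; transitivity along the row collapses $d_k=c_0\vee d_k$ onto $c_k\vee d_k$. Symmetrically, each $j\le k$ satisfies $\pi^{-1}(j)\le k$, whence $(c_k\vee d_{j-1},\,c_k\vee d_j)\in\bbeta_\pi$, and transitivity up the column collapses $c_k=c_k\vee d_0$ onto $c_k\vee d_k$. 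Thus $c_k$ and $d_k$ lie in the same block, i.e.\ $(c_k,d_k)\in\bbeta_\pi$. For the ``only if'' direction, I would join $(c_k,d_k)\in\bbeta_\pi$ with $c_k$; since $\bbeta_\pi$ is a join-congruence this yields $(c_k,\,c_k\vee d_k)\in\bbeta_\pi$. Blocks of a join-congruence are convex (if $(a,b)\in\bbeta_\pi$ and $a\le c\le b$, then joining with $c$ gives $(c,b)=(a\vee c,\,b\vee c)\in\bbeta_\pi$, so $(a,c)\in\bbeta_\pi$), hence the whole chain $c_k\vee d_0\prec\cdots\prec c_k\vee d_k$ is collapsed. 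Therefore $(c_k\vee d_{j-1},\,c_k\vee d_j)\in\bbeta_\pi$ for all $j\le k$, and Lemma~\ref{lMbETapIdesCribs}\eqref{lMbETapIjJCribsa} gives $\pi^{-1}(j)\le k$ for all $j\le k$, that is $\pi^{-1}(\set{1,\ldots,k})\subseteq\set{1,\ldots,k}$, which for a permutation is the desired closure condition. Chaining the two equivalences completes the proof.

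I expect the only delicate point to be the ``only if'' direction. One must remember that $\bbeta_\pi$ is merely a join-congruence, not a full lattice congruence, so the block of $c_k$ cannot be probed downward directly; the correct move is to push $d_k$ \emph{up} to the join $c_k\vee d_k$ using the join-compatibility, and only then invoke convexity of join-congruence blocks to read off that every covering step in the $c_k$-column is collapsed. Once this is set up, everything else is routine bookkeeping with Lemmas~\ref{lMbETapIdesCribs} and~\ref{lMasecSeGm}.
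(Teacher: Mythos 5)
Your proof is correct and takes essentially the same route as the paper's: the forward direction collapses the row and column of the grid meeting at $c_k\vee d_k$ via Lemma~\ref{lMbETapIdesCribs} and transitivity, the converse uses convexity of the join-congruence block containing $c_k$, $d_k$ and $c_k\vee d_k$ to recover the closure of $\set{1,\ldots,k}$, and Lemma~\ref{lMasecSeGm} translates closure into the segment condition. The only (immaterial) difference is that in the converse you walk up the column $c_k\vee d_0\prec\cdots\prec c_k\vee d_k$ and conclude $\pi^{-1}(j)\leq k$, whereas the paper walks along the row $c_0\vee d_k\prec\cdots\prec c_k\vee d_k$ and concludes $\pi(i)\leq k$; these are mirror images of the same argument.
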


\begin{proof} Assume that $k$ is the largest element of  $\pblokk k{\pi}$.
Using  the notation \eqref{sllrNoeshM}, it follows that  
$\set{1,\ldots,k}  =\pblokk k{\pi} \cup \bigl(\pblokk k{\pi}\bigr)_l  $ is  closed with respect to $\pi$ and  $\pi^{-1}$. Hence, by Lemma~\ref{lMbETapIdesCribs}, $(c_{i-1}\vee d_k, c_i\vee d_k)\in \bbeta_\pi$ and $(c_{k}\vee d_{j-1}, c_k\vee d_j)\in \bbeta_\pi$ for all $i,j\in\set{1,\ldots,k}$. Thus, we conclude that 
$(c_k,d_k)= (c_k\vee d_0, c_0\vee d_k)\in\bbeta_\pi$ by transitivity.

Conversely, assume that $(c_k,d_k)\in\bbeta_\pi$.
Denote $\set{1,\ldots,k}$ by $I$.
Since $\blokk {d_k}{\bbeta_\pi}$ is a convex join-subsemilattice containing $c_k$ and  $c_k\vee d_k$, it follows from $d_k\leq  c_{i-1}\vee d_k \leq  c_i\vee d_k \leq c_k\vee d_k$ that $(c_{i-1}\vee d_k, c_i\vee d_k)\in \bbeta_\pi$ for all $i\in I$. 
This implies that $\pi(i)\in I$, for all $i\in I$, by Lemma~\ref{lMbETapIdesCribs}. 
That is, $I$ is a $\pi$-closed subset of $\set{1,\ldots,n}$. Since then $I_r=\set{1,\ldots,n}\setminus I$ and $I_l=\emptyset$ are also $\pi$-closed,  $I$ is a $\pi$-section. Hence $k$, the largest element of $I$, is  the largest element of $\pblokk k{\pi}$ by \eqref{lMasecSeGa} and \eqref{lMasecSeGc} of  Lemma~\ref{lMasecSeGm}.
\end{proof}

\begin{lemma}\label{lIHcidismbcHin} 
$\bound{\quotientalg G{\bbeta_\pi}}= \bigset{\blokk{c_i}{\bbeta_\pi}: i\in\set{0,\ldots,n}}\cup \bigset{\blokk{d_i}{\bbeta_\pi}: i\in\set{0,\ldots,n}}$. 
\end{lemma}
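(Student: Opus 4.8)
The plan is to realize $\mathcal C:=\set{\blokk{c_i}{\bbeta_\pi}:0\le i\le n}$ and $\mathcal D:=\set{\blokk{d_j}{\bbeta_\pi}:0\le j\le n}$ as the left and right boundary chains of a single planar diagram of $L:=\quotientalg G{\bbeta_\pi}$, and then to appeal to the fact that the boundary of a slim semimodular lattice is the same in every diagram. First I would collect the facts already at hand: by Lemma~\ref{eLleGTbgRd} we have $L\in\smLat n$, and the chain displays \eqref{eThEciperBtacoChns} together with the length computation at the end of that proof show that $\mathcal C$ and $\mathcal D$ are maximal chains of $L$, each running from $\blokk 0{\bbeta_\pi}$ to $\blokk 1{\bbeta_\pi}$. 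Since \czgsch\cite[Lemma 7]{r:czg-sch-visual} lets us speak of $\bound L$ without reference to a diagram, it suffices to exhibit one $\diag L\in\Diag L$ with $\leftb{\diag L}=\mathcal C$ and $\rightb{\diag L}=\mathcal D$; then $\bound L=\leftb{\diag L}\cup\rightb{\diag L}=\mathcal C\cup\mathcal D$ is exactly the claim.

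Next I would move to the level of diagrams. Equip the square grid $G$ with its standard planar diagram; its full left boundary chain is $0=c_0\prec\cdots\prec c_n\prec c_n\vee d_1\prec\cdots\prec c_n\vee d_n=1$, and its full right boundary chain is the mirror image obtained by interchanging the roles of $C$ and $D$. The congruence $\bbeta_\pi$ is a cover-preserving join-congruence (this was proved inside Lemma~\ref{eLleGTbgRd}), so collapsing $\diag G$ modulo $\bbeta_\pi$ yields a planar diagram of $L$ in which the image of $\leftb{\diag G}$ is the left boundary chain and the image of $\rightb{\diag G}$ is the right boundary chain; this is the point where I would invoke the grid-folding description of slim semimodular lattices from \czgsch\cite{rczgschhowto} (see also G.\ Gr\"atzer and E.\ Knapp~\cite{r:GratzerKnappAU}). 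That the two outer chains of $G$ genuinely survive as outer chains is consistent with part~\eqref{lMbETapIjJCribsc} of Lemma~\ref{lMbETapIdesCribs}, which says that no edge of the lower left or lower right boundary of $G$ is collapsed.

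It then remains to compute the two images under the quotient map $\eta$. On their lower halves these images are $\blokk{c_i}{\bbeta_\pi}$ and $\blokk{d_j}{\bbeta_\pi}$ by the definition of $\eta$. On the upper halves, part~\eqref{lMbETapIjJCribsa} of Lemma~\ref{lMbETapIdesCribs} (with $j=n$, resp.\ $i=n$) gives $\bblokk{c_i\vee d_n}{\bbeta_\pi}=\blokk 1{\bbeta_\pi}=\bblokk{c_n\vee d_j}{\bbeta_\pi}$ for all $i,j$, so the upper left and upper right edges of $\diag G$ are collapsed onto the top. Hence $\eta(\leftb{\diag G})=\mathcal C$ and $\eta(\rightb{\diag G})=\mathcal D$, and assembling the three steps proves $\bound L=\mathcal C\cup\mathcal D$.

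The hard part is the middle step: establishing rigorously that folding the planar diagram of $G$ along $\bbeta_\pi$ is again planar and that its boundary chains are precisely the $\eta$-images of the boundary chains of $G$ --- equivalently, that no interior identification ever pushes an element strictly to the left of $\mathcal C$ or strictly to the right of $\mathcal D$. If one preferred to avoid this machinery, the alternative is to prove the two inclusions of the asserted equality directly. The inclusion $\mathcal C\cup\mathcal D\subseteq\bound L$ looks approachable from \eqref{ejirsubbound} and Lemma~\ref{lMbETapIdesCribs}, using that every join-irreducible of $L$ is the $\eta$-image of a join-irreducible $c_i$ or $d_j$ of $G$. The reverse inclusion $\bound L\subseteq\mathcal C\cup\mathcal D$, however, would require certifying that an off-chain element $\blokk{c_i\vee d_j}{\bbeta_\pi}$ has elements of $L$ strictly on both sides of it, which is again an essentially planar assertion; this is why I expect the grid-folding route to be the most economical.
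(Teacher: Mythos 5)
Your plan has a genuine gap at precisely the point you yourself flag as ``the hard part''. The claim that collapsing the planar diagram of $G$ modulo $\bbeta_\pi$ produces a planar diagram of $L=\quotientalg G{\bbeta_\pi}$ whose left and right boundary chains are the $\eta$-images of the boundary chains of $G$ is not supplied by the references you invoke: \czgsch\cite{rczgschhowto} and G.~Gr\"atzer and E.~Knapp~\cite{r:GratzerKnappAU} give that $L$ is a cover-preserving join-homomorphic image of a grid, but say nothing about where the image of the grid's boundary sits inside a planar diagram of $L$. In the paper's own architecture this boundary statement \emph{is} the content of Lemma~\ref{lIHcidismbcHin}: the canonical diagram $\diac{\quotientalg G{\bbeta_\pi}}$ with boundary chains $\bigset{\blokk{c_i}{\bbeta_\pi}}$ and $\bigset{\blokk{d_i}{\bbeta_\pi}}$ is shown to exist (in the proof of Lemma~\ref{lMpsinullaNdphnuOK}) only by combining this lemma with Lemma~\ref{ldCoPSetwChAmsW}\eqref{ldCoPSetwChAmsc}. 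So your main route assumes what is to be proved.

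Your fallback, the direct two-inclusion argument, is in fact the paper's route, but the two steps you describe as approachable or as requiring planarity are exactly where the work lies. For the inclusion $\bigset{\blokk{c_i}{\bbeta_\pi}}\cup\bigset{\blokk{d_i}{\bbeta_\pi}}\subseteq\bound{\quotientalg G{\bbeta_\pi}}$, \eqref{ejirsubbound} only handles the join-irreducible classes; for a join-reducible $\blokk{c_i}{\bbeta_\pi}$ the paper proves, by choosing a minimal $t$ with $\blokk{c_{i}}{\bbeta_\pi}=\blokk{c_{i-1}}{\bbeta_\pi}\vee\blokk{d_{t}}{\bbeta_\pi}$ and analyzing the two covers of $c_{i-1}\vee d_{t-1}$ in the grid, that $\blokk{c_i}{\bbeta_\pi}$ is the \emph{unique} cover of $\blokk{c_{i-1}}{\bbeta_\pi}$, whence it lies on the same maximal boundary chain by induction. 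For the reverse inclusion no left/right planarity certificate is needed: the paper counts heights --- the boundary contains at most two elements of each height $i$, and the set already shown to lie in the boundary supplies at height $i$ either a narrows or two distinct elements $\blokk{c_i}{\bbeta_\pi}\neq\blokk{d_i}{\bbeta_\pi}$, which therefore exhaust the boundary at that height. Without these two arguments your proof is incomplete.
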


\begin{proof} 
Let $K:= \bigset{\blokk{c_i}{\bbeta_\pi}: i\in\set{0,\ldots,n}}\cup \bigset{\blokk{d_i}{\bbeta_\pi}: i\in\set{0,\ldots,n}}$. 
The height of an element $y$, that is, the length of $[0,y]$, will be denoted by $h(y)$. 

To show that $K\subseteq \bound{\quotientalg G{\bbeta_\pi}}$, we prove by induction on $i$ that 
\begin{equation}
 \blokk{c_i}{\bbeta_\pi} \in  \bound{\quotientalg G{\bbeta_\pi}}
,\qquad
\blokk{d_i}{\bbeta_\pi} \in  \bound{\quotientalg G{\bbeta_\pi}}\text.\tag{\text{H$_i$}}
\end{equation} 
Condition (H$_0$) is obvious. Assume that $0<i\leq n$ and  (H$_{i-1}$) holds. By symmetry, it suffices to show that $\blokk{c_i}{\bbeta_\pi} \in  
\bound{\quotientalg G{\bbeta_\pi}}$. We can assume that $\blokk{c_i}{\bbeta_\pi} \notin \Jir{ ({\quotientalg G{\bbeta_\pi}})}$ since otherwise  \eqref{ejirsubbound} applies.
Hence, by   \eqref{eThEciperBtacoChns}, there
exists an element $c_s\vee d_t\in G$ such that 
 $\blokk{c_{i-1}}{\bbeta_\pi}\parallel \bblokk{c_{s}\vee d_t}{\bbeta_\pi} < \blokk{c_{i}}{\bbeta_\pi}$. Clearly, $s<i-1$. Hence 
$\blokk{c_{i}}{\bbeta_\pi}=\blokk{c_{i-1}}{\bbeta_\pi} \vee \bblokk{c_{s}\vee d_t}{\bbeta_\pi}= \blokk{c_{i-1}}{\bbeta_\pi} \vee \blokk{d_{t}}{\bbeta_\pi}$. Suppose that $t$ is minimal with respect to the property $\blokk{c_{i}}{\bbeta_\pi}=\blokk{c_{i-1}}{\bbeta_\pi} \vee \blokk{d_{t}}{\bbeta_\pi}$. Clearly, $t\geq 1$. Let $x:=c_{i-1}\vee d_{t-1}$. 
By the minimality of $t$, we obtain the inequalities
$\blokk{c_{i-1}}{\bbeta_\pi} \leq   \blokk{x}{\bbeta_\pi}   = 
\blokk{c_{i-1}}{\bbeta_\pi} \vee \blokk{d_{t-1}}{\bbeta_\pi} < \blokk{c_{i}}{\bbeta_\pi}$. 
Hence   \eqref{eThEciperBtacoChns} yields that 
$\blokk{x}{\bbeta_\pi} = \blokk{c_{i-1}}{\bbeta_\pi}$. 

Next, assume that $z\in G$ such that $\blokk{c_{i-1}}{\bbeta_\pi} < \blokk{z}{\bbeta_\pi}$. Then 
\[\blokk{z}{\bbeta_\pi} = \blokk{z}{\bbeta_\pi} \vee \blokk{c_{i-1}}{\bbeta_\pi} = \blokk{z}{\bbeta_\pi} \vee \blokk{x}{\bbeta_\pi} =  \bblokk{x\vee z}{\bbeta_\pi}\text.\] 
Since $\blokk{x}{\bbeta_\pi}= \blokk{c_{i-1}}{\bbeta_\pi} \neq \bblokk{x\vee z}{\bbeta_\pi}$, we obtain that $x<x\vee z$. Since $c_i\vee d_{t-1}$ and $c_{i-1}\vee d_{t}$ are the only covers of $x$, we conclude that $c_{i-1}\vee d_{t}\leq x\vee z$  or $c_i\vee d_{t-1}\leq x\vee z$. 
In the first case, $\blokk{c_{i}}{\bbeta_\pi} = \bblokk{c_{i-1}\vee d_{t}}{\bbeta_\pi}\leq \bblokk{x\vee z}{\bbeta_\pi}=\blokk{z}{\bbeta_\pi}$. 
In the second case, $\blokk{c_{i}}{\bbeta_\pi} =
\blokk{c_{i}}{\bbeta_\pi} \vee \blokk{c_{i-1}}{\bbeta_\pi} = \blokk{c_{i}}{\bbeta_\pi} \vee \blokk{x}{\bbeta_\pi}=\bblokk{c_{i}\vee x}{\bbeta_\pi}=\bblokk{c_{i}\vee d_{t-1}}{\bbeta_\pi} \leq \bblokk{x\vee z}{\bbeta_\pi} = \blokk{z}{\bbeta_\pi}$. This shows that  $\blokk{c_{i}}{\bbeta_\pi}$ is the only cover of $\blokk{c_{i-1}}{\bbeta_\pi}$. 
Take a diagram $\diag{(\quotientalg G{\bbeta_\pi})} \in \Diag{\quotientalg G{\bbeta_\pi}}$.
By left-right symmetry  and the induction hypothesis (H$_{i-1}$), we can assume that  $\blokk{c_{i-1}}{\bbeta_\pi}\in \leftb{\diag{(\quotientalg G{\bbeta_\pi})}}$. 
We know that $\leftb{\diag{(\quotientalg G{\bbeta_\pi})}}$ is a maximal chain. Hence $\blokk{c_{i}}{\bbeta_\pi}$, which is the only cover of $\blokk{c_{i-1}}{\bbeta_\pi}$, belongs to $\leftb{\diag{(\quotientalg G{\bbeta_\pi})}}$. Thus, $\blokk{c_{i}}{\bbeta_\pi}\in \bound{\diag{(\quotientalg G{\bbeta_\pi})}} =  \bound{\quotientalg G{\bbeta_\pi}}$, and (H$_i$) holds.
Therefore, $K\subseteq \bound{\quotientalg G{\bbeta_\pi}}$.

To show the converse inclusion, let us assume that 
\[\blokk{x}{\bbeta_\pi}\in    \bound{\quotientalg G{\bbeta_\pi}}=
\leftb{\diag{(\quotientalg G{\bbeta_\pi})}}\cup 
\rightb{\diag{(\quotientalg G{\bbeta_\pi})}}\text.
\]
Let  $\blokk{x}{\bbeta_\pi} \in \leftb{\diag{(\quotientalg G{\bbeta_\pi})}}$; the other case is similar. Denote $h(\blokk{x}{\bbeta_\pi})$ by $i$. 
Assume first that $\blokk{x}{\bbeta_\pi}$ belongs also to $\rightb{\diag{(\quotientalg G{\bbeta_\pi})}}$. 
 Then $\blokk{x}{\bbeta_\pi} \in \Nar {\quotientalg G{\bbeta_\pi}}$.
Hence $\blokk{x}{\bbeta_\pi}$ is comparable with $\blokk{c_i}{\bbeta_\pi}$. But $h(\blokk{c_i}{\bbeta_\pi})=i=h(\blokk{x}{\bbeta_\pi})$ by \eqref{eThEciperBtacoChns}, 
whence $\blokk{x}{\bbeta_\pi} = \blokk{c_i}{\bbeta_\pi}
\in K$.

Secondly, we assume  that $\blokk{x}{\bbeta_\pi} \notin \rightb{\diag{(\quotientalg G{\bbeta_\pi})}}$. 
Let $\blokk{y}{\bbeta_\pi}$ be the unique element of $\rightb{\diag{(\quotientalg G{\bbeta_\pi})}}$ with height $i$. Then $\blokk{x}{\bbeta_\pi}$ and 
$\blokk{y}{\bbeta_\pi}$ are the only elements of $\bound {\quotientalg G{\bbeta_\pi}}$ with height $i$, and they are distinct. Hence  $\Nar{\quotientalg G{\bbeta_\pi}}$ has  no element with height $i$.
Clearly, $\Jir{(\quotientalg G{\bbeta_\pi})}\subseteq K$. Hence $\blokk{c_i}{\bbeta_\pi} \neq \blokk{d_i}{\bbeta_\pi}$ since otherwise 
$\blokk{c_i}{\bbeta_\pi}$ would belong to $\Nar{\quotientalg G{\bbeta_\pi}}$ and it would be of height $i$ by 
\eqref{eThEciperBtacoChns}.
So $K$ also has two elements of height $i$, namely, $\blokk{c_i}{\bbeta_\pi}$ and $\blokk{d_i}{\bbeta_\pi}$. Since $K\subseteq \bound{\quotientalg G{\bbeta_\pi}}$, we conclude that $\set{\blokk{c_i}{\bbeta_\pi}, \blokk{d_i}{\bbeta_\pi}}=\set{\blokk{x}{\bbeta_\pi},\blokk{y}{\bbeta_\pi}}$. Hence 
$\blokk{x}{\bbeta_\pi}\in \set{\blokk{c_i}{\bbeta_\pi}, \blokk{d_i}{\bbeta_\pi}} \subseteq K$, proving that 
$\bound{\quotientalg G{\bbeta_\pi}}\subseteq K$.
\end{proof}

\begin{lemma}\label{lRtoSnlegsmgTzs} Assume that $I:=\set{u+1,\ldots,v}$ is a section of $\pi\in S_n$, and let $\sigma=\restrict\pi{I}$ be the restriction of $\pi$ to $I$. Then 
the subdiagram  $\diavarc{\bigl[ \blokk{c_u}{\bbeta_\pi}, \blokk{c_v}{\bbeta_\pi}\bigr]}$ of $\diac{\quotientalg G{\bbeta_\pi}}= \phi_0(\pi)$  equals $\phi_0(\sigma)$. Furthermore, 
$\blokk{c_u}{\bbeta_\pi}$ and $\blokk{c_v}{\bbeta_\pi}$ belong to $\Nar{\phi_0(\pi)}$.
\end{lemma}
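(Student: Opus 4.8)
The plan is to cut the diagram $\phi_0(\pi)$ at the two narrows $\blokk{c_u}{\bbeta}$ and $\blokk{c_v}{\bbeta}$ (I abbreviate $\bbeta:=\bbeta_\pi$) and to recognize the piece between them as $\phi_0(\sigma)$. First I would dispose of the ``furthermore'' claim, which is in fact needed for the rest. Because $I$ is a section, the sets $\set{1,\ldots,u}$, $I$, $\set{v+1,\ldots,n}$ are $\pi$-closed, so $\set{1,\ldots,u}$ and $\set{1,\ldots,v}=\set{1,\ldots,u}\cup I$ are $\pi$-sections; by Lemma~\ref{lMasecSeGm}\eqref{lMasecSeGc} each is a union of $\pi$-segments, whence $u$ and $v$ are the largest elements of $\pblokk{u}{\pi}$ and $\pblokk{v}{\pi}$ (the case $u=0$ being trivial). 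Lemma~\ref{lmKoooszeRe} then yields $(c_u,d_u),(c_v,d_v)\in\bbeta$, i.e.\ $\blokk{c_u}{\bbeta}=\blokk{d_u}{\bbeta}$ and $\blokk{c_v}{\bbeta}=\blokk{d_v}{\bbeta}$; since by \eqref{eThEciperBtacoChns} the height-$i$ members of the two boundary chains are $\blokk{c_i}{\bbeta}$ and $\blokk{d_i}{\bbeta}$, this says precisely that $\blokk{c_u}{\bbeta},\blokk{c_v}{\bbeta}\in\leftb{\phi_0(\pi)}\cap\rightb{\phi_0(\pi)}=\Nar{\phi_0(\pi)}$.

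Next I would introduce the subgrid $G_I:=[c_u\vee d_u,\,c_v\vee d_v]$ of $G$, a square grid of length $2(v-u)$ whose lower left and lower right boundaries are $\set{c_{u+i}\vee d_u:0\le i\le v-u}$ and $\set{c_u\vee d_{u+j}:0\le j\le v-u}$. Identifying $I$ with $\set{1,\ldots,v-u}$ order-isomorphically, $G_I$ is isomorphic to the grid underlying $\phi_0(\sigma)$, and (using $\pi(I)=I$) this isomorphism matches the generators $\jcongg(c_i\vee d_{\pi(i)})$, $i\in I$, of $\bbeta$ with the defining generators of $\bbeta_\sigma$. The goal then splits into two equalities: $\restrict{\bbeta}{G_I}=\bbeta_\sigma$ (modulo this identification), and $\eta(G_I)=[\blokk{c_u}{\bbeta},\blokk{c_v}{\bbeta}]$, where $\eta\colon G\to\quotientalg{G}{\bbeta}$ is the canonical surjection.

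For the first equality I would invoke the general principle that a join-congruence of a finite lattice is recoverable from the set of prime intervals it collapses: its classes are order-convex join-subsemilattices, so comparable elements are congruent iff some maximal chain between them is edgewise collapsed, and the incomparable case reduces via joins. It therefore suffices to check that $\restrict{\bbeta}{G_I}$ and $\bbeta_\sigma$ collapse the same edges of $G_I$, and this is read off from Lemma~\ref{lMbETapIdesCribs}\eqref{lMbETapIjJCribsa}: a left edge $(c_{i-1}\vee d_j,c_i\vee d_j)$ of $G_I$ is $\bbeta$-collapsed iff $\pi(i)\le j$, matching the corresponding $\bbeta_\sigma$-condition $\sigma(i-u)\le j-u$ (as $\sigma(i-u)=\pi(i)-u$), with the vertical edges symmetric via $\pi^{-1}$. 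For the second equality, $\subseteq$ is immediate; for $\supseteq$, given $y=c_s\vee d_t$ with $\blokk{c_u}{\bbeta}\le\blokk{y}{\bbeta}\le\blokk{c_v}{\bbeta}$, the height argument of the first paragraph forces $s,t\le v$, and replacing $y$ by $y\vee c_u\vee d_u=c_{\max(s,u)}\vee d_{\max(t,u)}\in G_I$ leaves the class unchanged, since $\eta$ is a join-homomorphism and $\blokk{c_u}{\bbeta}\le\blokk{y}{\bbeta}$.

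Finally I would assemble: $\restrict{\eta}{G_I}$ is a surjective join-homomorphism onto $[\blokk{c_u}{\bbeta},\blokk{c_v}{\bbeta}]$ with kernel $\bbeta_\sigma$, so the Homomorphism Theorem gives a lattice isomorphism $\Phi\colon\biglat{\phi_0(\sigma)}\to[\blokk{c_u}{\bbeta},\blokk{c_v}{\bbeta}]$. It remains to check that $\Phi$ is a boundary similarity: it sends the left boundary of $\phi_0(\sigma)$ to $\set{\blokk{c_j}{\bbeta}:u\le j\le v}$ and the right boundary to $\set{\blokk{d_j}{\bbeta}:u\le j\le v}$, which, precisely because $\blokk{c_u}{\bbeta}$ and $\blokk{c_v}{\bbeta}$ are narrows, are the boundary chains of the inherited subdiagram $\diavarc{\bigl[\blokk{c_u}{\bbeta},\blokk{c_v}{\bbeta}\bigr]}$. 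I expect the crux to be the identity $\restrict{\bbeta}{G_I}=\bbeta_\sigma$: keeping the collapsing conditions of Lemma~\ref{lMbETapIdesCribs} confined to $I$ is exactly where the hypothesis that $I$ is a \emph{section} (rather than an arbitrary interval) is essential, and the reconstruction of a join-congruence from its collapsed edges is what turns that bookkeeping into the desired equality of congruences.
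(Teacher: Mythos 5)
Your proposal is correct and follows essentially the same route as the paper's proof: the same subgrid $B=[c_u\vee d_u,\,c_v\vee d_v]$, the narrows claim via Lemma~\ref{lMasecSeGm} and Lemma~\ref{lmKoooszeRe}, a Third-Isomorphism-Theorem-style identification of $\bigl[\blokk{c_u}{\bbeta_\pi},\blokk{c_v}{\bbeta_\pi}\bigr]$ with $\quotientalg{B}{(\restrict{\bbeta_\pi}{B})}$, and the reduction of $\restrict{\bbeta_\pi}{B}=\bbeta_\sigma$ to comparing collapsed prime intervals via Lemma~\ref{lMbETapIdesCribs}. You merely spell out some steps the paper labels ``straightforward'' (surjectivity onto the interval, and the fact that a join-congruence is determined by the prime intervals it collapses), which is sound.
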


\begin{proof} 
Consider the interval  $B:=[c_u\vee d_u,c_v\vee d_v]$ of $G$. Then $B$ is a square grid, a subgrid of $G$. We infer from Lemma~\ref{lMasecSeGm}\eqref{lMasecSeGa} and \eqref{lMasecSeGc} that $v$ is the largest element of 
$\pblokk v{\pi}$, and the same holds for $u$ if $u>0$. By  Lemma~\ref{lmKoooszeRe},  this 
yields that  $\blokk{c_u}{\bbeta_\pi}=\blokk{d_u}{\bbeta_\pi}= \bblokk{c_u\vee d_u}{\bbeta_\pi}$ and $\blokk{c_v}{\bbeta_\pi}=\blokk{d_v}{\bbeta_\pi}= \bblokk{c_v\vee d_v}{\bbeta_\pi}$. Hence Lemma~\ref{lIHcidismbcHin} implies the last sentence of Lemma~\ref{lRtoSnlegsmgTzs}.
Each element  $\blokk{y}{\bbeta_\pi}\in \bigl[ \blokk{c_u}{\bbeta_\pi}, \blokk{c_v}{\bbeta_\pi}\bigr]$ is of the form $ \blokk{x}{\bbeta_\pi}$ for some $x\in B$ since 
\[
\blokk{y}{\bbeta_\pi}=\bigl(\blokk{y}{\bbeta_\pi}
\vee \bblokk{ c_u\vee d_u}{\bbeta_\pi}\bigr)\wedge  \bblokk{ c_v\vee d_v}{\bbeta_\pi}= \bbblokk{ (y\vee c_u\vee d_u)\wedge (c_v\vee d_v)}{\bbeta_\pi}\text.
\]
Hence, as in  the Third Isomorphism Theorem in S.\ Burris and H.\,P.\ Sankappanavar \cite[Thm.\ 6.18]{r-burrissankap}, it is straightforward to see that 
 $\bigl[ \blokk{c_u}{\bbeta_\pi}, \blokk{c_v}{\bbeta_\pi}\bigr]$ is isomorphic to $\quotientalg B{(\restrict{\bbeta_\pi}{B})}$ 
and $\blokk{x}{\bbeta_\pi}\mapsto \blokk{x}{(\restrict{\bbeta_\pi}B)}$ is an isomorphism. 
This yields that $\diavarc{[ \blokk{c_u}{\bbeta_\pi}, \blokk{c_v}{\bbeta_\pi}]}= \diac{\quotientalg B{(\restrict{\bbeta_\pi}{B})}}$. Hence it suffices to show that $\restrict{\bbeta_\pi}{B} =\bbeta_\sigma$. In fact,  it suffices to show that $\restrict{\bbeta_\pi}{B}$ and $\bbeta_\sigma$ collapse exactly the same prime intervals of $B$.  But this is a straightforward consequence of Lemma~\ref{lMbETapIdesCribs}.     
\end{proof}

\begin{lemma}\label{ldCoPSetwChAmsW} 
Assume that  $t\in \mathbb N=\set{1,2,\ldots}$, $M$  is a slim semimodular lattice 
with $\Nar M=\set{0=z_0<z_1<\cdots <z_t=1}$,   $\diag M\in \Diag M$, and  
$U$ and $V$ are maximal chains in $M$ such that $U\cup V=\bound M$. Then the following four assertions hold.
\begin{enumeratei}
\item\label{ldCoPSetwChAmsa} $\Nar M=U\cap V$.
In particular, $\leftb{\diag M} \cap \rightb{\diag M} = \Nar M$.
\item\label{ldCoPSetwChAmsb} If $M$ is indecomposable, then $\set{U,V}=
\set{\leftb {\diag M},\rightb {\diag M}}$.
\item\label{ldCoPSetwChAmsc} $M$ has a planar diagram $\diah M$ such that $\leftb {\diah M}=U$ and $\rightb {\diah M}=V$.
\item\label{ldCoPSetwChAmsd} All planar diagrams of $M$ can be obtained from $\diag M$ in the following way. Take a subset $H$ of $\set{1,\ldots,t}$,  reflect the interval $\diag{[z_{i-1},z_i]}$ of $\diag M$ 
vertically for all $i\in H$, and keep the other $\diag{[z_{i-1},z_i]}$ unchanged. Furthermore, each subset $H$ of $\set{1,\ldots,t}$ yields a member of $\Diag M$.
\end{enumeratei}
\end{lemma}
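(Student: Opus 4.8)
The plan is to reduce all four assertions to the \emph{indecomposable case} via the glued-sum decomposition of $M$ along its narrows $\set{0=z_0<\cdots<z_t=1}$, isolating one geometric fact as the workhorse. First I record the easy inclusion: since every narrows is comparable with all of $M$, it lies on every maximal chain, so $\Nar M\subseteq U\cap V$; the ``in particular'' clause of assertion~(i) is just the identity $\leftb{\diag M}\cap\rightb{\diag M}=\Nar M$ recalled in the text. The structural remark I rely on is that each interval $[z_{i-1},z_i]$ is an indecomposable slim semimodular lattice, that $\bound M$ is the union of the boundaries $\bound{[z_{i-1},z_i]}$, and that consecutive intervals share only the pinch point $z_i$. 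Because $z_{i-1},z_i\in\Nar M$, every maximal chain of $M$ restricts to a maximal chain of each $[z_{i-1},z_i]$, and a maximal chain contained in $\bound M$ restricts to one contained in $\bound{[z_{i-1},z_i]}$.

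The workhorse is the following claim about an \emph{indecomposable} slim semimodular lattice $N$ (the case $t=1$): the only maximal chains contained in $\bound N$ are $\leftb{\diag N}$ and $\rightb{\diag N}$. I would prove it through the auxiliary fact that an interior left-boundary element and an interior right-boundary element are always incomparable; granting this, a maximal chain $W\subseteq\bound N=\leftb{\diag N}\cup\rightb{\diag N}$ can switch from the left chain to the right one only at an element lying on both, i.e.\ at a narrows, and in $N$ the only narrows are $0$ and $1$, so $W$ stays inside a single boundary chain and thus equals it. Assertion~(ii) is then immediate: $U,V$ are maximal chains in $\bound M=\bound N$, hence both lie in $\set{\leftb{\diag N},\rightb{\diag N}}$; they are distinct because $U\cup V=\bound N$ forces $\leftb{\diag N}\neq\rightb{\diag N}$ once $|N|>2$. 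Assertion~(i) follows by applying this on each segment: the restrictions of $U$ and $V$ to $[z_{i-1},z_i]$ are the two boundary chains of that interval in some order (or the single edge, if the segment is trivial), so they meet exactly in $\set{z_{i-1},z_i}$, and taking the union over $i$ gives $U\cap V=\Nar M$.

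For assertions~(iii) and~(iv) the construction is to reflect segments. Write $P_i:=\leftb{\diag{[z_{i-1},z_i]}}$ and $Q_i:=\rightb{\diag{[z_{i-1},z_i]}}$ for the boundary chains of the $i$-th segment diagram, and set $U_i:=\restrict U{[z_{i-1},z_i]}$, $V_i:=\restrict V{[z_{i-1},z_i]}$. By assertion~(i) the ordered pair $(U_i,V_i)$ equals $(P_i,Q_i)$ or $(Q_i,P_i)$; put $H:=\set{i:(U_i,V_i)=(Q_i,P_i)}$. Reflecting vertically the subdiagram $\diag{[z_{i-1},z_i]}$ for each $i\in H$ and leaving the others unchanged produces a drawing $\diah M$ whose left boundary is $\bigcup_i U_i=U$ and whose right boundary is $\bigcup_i V_i=V$. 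The point that needs care—where I expect the real work—is that this reflected drawing is again a \emph{planar} diagram of $M$: here I would use that the segments occupy disjoint horizontal strips meeting only at the pinch points $z_i$, which lie on both boundaries and may be placed on the reflection axis, and that every covering pair of $M$ lies inside a single $[z_{i-1},z_i]$ (a narrows cannot sit strictly inside a prime interval). Hence mirroring one strip neither creates crossings nor disturbs the gluing, and we obtain a valid planar diagram realizing $(U,V)$. This proves assertion~(iii) and the ``each $H$ yields a member of $\Diag M$'' half of assertion~(iv).

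For the remaining half of~(iv), let $\diah M\in\Diag M$ be arbitrary. Since $\bound{\diah M}=\bound M$ is diagram-independent, its boundary chains $U':=\leftb{\diah M}$ and $V':=\rightb{\diah M}$ satisfy $U'\cup V'=\bound M$, so the segment analysis assigns to $\diah M$ a subset $H'$ of flipped segments. The diagram obtained from $\diag M$ by reflecting the segments in $H'$ has the same left boundary $U'$ and the same right boundary $V'$ as $\diah M$; by the very definition of boundary similarity the two diagrams are then equal in $\Diag M$, so $\diah M$ indeed arises from $\diag M$ by the described reflections. The main obstacle throughout is the incomparability of interior boundary elements in the indecomposable case (equivalently, that such a lattice has exactly its two boundary chains as maximal boundary chains), which rests on planarity; I would derive it from the region/crossing behaviour of maximal chains in planar diagrams in the sense of Kelly and Rival~\cite{r:kelly-rival}, exactly as that behaviour is used elsewhere in the paper.
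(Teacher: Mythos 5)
Your reduction to segments and the reflection argument for parts (iii) and (iv) follow the same lines as the paper, but the ``workhorse'' on which everything rests is false, and so is the auxiliary fact you propose to derive it from. Take $N$ to be the direct product of a three-element chain and a two-element chain, with elements $(i,j)$, $i\in\set{0,1,2}$, $j\in\set{0,1}$, drawn as the usual grid. This is an indecomposable slim (distributive, hence semimodular) lattice; its left boundary is $\set{(0,0),(1,0),(2,0),(2,1)}$, its right boundary is $\set{(0,0),(0,1),(1,1),(2,1)}$, and every element lies on the boundary. Here $(1,0)$ is an interior left-boundary element, $(1,1)$ is an interior right-boundary element, and $(1,0)\prec(1,1)$ --- so interior elements of opposite boundary chains can even form a covering pair. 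Consequently $W=\set{(0,0)\prec(1,0)\prec(1,1)\prec(2,1)}$ is a maximal chain contained in $\bound N$ that is neither $\leftb{\diag N}$ nor $\rightb{\diag N}$: your claim that the only maximal chains inside the boundary are the two boundary chains fails, and with it your proofs of (i) and (ii) as written.

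What saves the lemma is the hypothesis that \emph{two} maximal chains $U$ and $V$ jointly cover $\bound M$; this must be exploited for both chains simultaneously, not one chain at a time. In the example above, $W$ cannot occur as $U$ or $V$, because the two leftover boundary elements $(2,0)$ and $(0,1)$ are incomparable and hence cannot both lie on the second chain. The paper makes this precise by induction on height in the indecomposable case: after normalizing $u_1=c_1$, if $u_{i-1}=c_{i-1}$ and $v_{i-1}=d_{i-1}$ but the chains swapped sides at level $i$ (that is, $u_i=d_i$ and $v_i=c_i$, with $c_i\neq d_i$), then
\[
c_{i-1}<c_{i-1}\vee d_{i-1}=(c_{i-1}\wedge u_{i-1})\vee(v_{i-1}\wedge d_{i-1})\leq (c_i\wedge u_i)\vee(v_i\wedge d_i)=c_i\wedge d_i<c_i,
\]
contradicting $c_{i-1}\prec c_i$. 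So a single maximal chain may drift from one boundary to the other, but the pair $(U,V)$ cannot; you need an argument of this two-chain type (or some other genuine use of $U\cup V=\bound M$) before parts (i)--(iv) can be carried out along your route.
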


\begin{proof} 
Suppose that $z\in U\cap V$.  
Then $z\in\Nar M$  since $z$ is comparable with all elements of $\Jir M$ by \eqref{ejirsubbound}. Conversely, since every element of $\Nar M$ belongs to all maximal chains, $\Nar M\subseteq U\cap V$. This proves \eqref{ldCoPSetwChAmsa}.
 
Assume that $M$ is indecomposable. For the elements of the  boundary of $\diag M$, we use the notation introduced in \eqref{eleriboudelesnot}. Since all chains of $M$ are of the same length, we can write
$U$ and $V$ in the form $\set{0=u_0\prec u_1\prec\cdots\prec u_n=1}$ and  $\set{0=v_0\prec v_1\prec\cdots\prec v_n=1}$, respectively. By 
\eqref{eeMntcotMsttw} and symmetry, we can assume that $u_1=c_1$. We prove by induction on $i$ that $u_i=c_i$ and $v_i=d_i$. The case $i\in\set{0,1,n}$ is clear. 
Assume that $1<i<n$,  $u_{i-1}=c_{i-1}$, $v_{i-1}=d_{i-1}$ but, say  $u_{i}\neq c_{i}$. By part \eqref{ldCoPSetwChAmsa}, there are exactly two elements in $\bound M$ whose height is $i$. Therefore, $u_{i} = d_{i}$, $v_{i} = c_{i}$ and $c_i\neq d_i$. Since distinct elements of the same height are incomparable,
\begin{align*}
c_{i-1}&< c_{i-1} \vee d_{i-1} =  
 (c_{i-1}\wedge u_{i-1}) \vee(v_{i-1}\wedge d_{i-1}) \cr
& \leq  (c_i\wedge u_i) \vee(v_i\wedge d_i)
 =   (c_i\wedge d_i) \vee(c_i\wedge d_i) = c_i\wedge d_i <c_i\text.
\end{align*}
This  contradicts that $c_{i-1}\prec c_i$, proving part  \eqref{ldCoPSetwChAmsb} of the statement.

Part \eqref{ldCoPSetwChAmsb} trivially implies part \eqref{ldCoPSetwChAmsc} in the particular case when $M$ is indecomposable or $|M|=2$, that is, when $t\leq 1$. Otherwise, for $i=1,\ldots, t$, let $M_i=[z_{i-1},z_i]$, $U_i:=M_i\cap U$ and  $V_i:=M_i\cap V$. Applying the particular case  to each $i\in \set{1,\ldots,t}$, we obtain part \eqref{ldCoPSetwChAmsc}.

Finally, consider a diagram $\diai M\in\Diag M$. Let $i\in\set{1,\ldots,t}$. Since $[z_{i-1},z_i]$ is clearly indecomposable, part \eqref{ldCoPSetwChAmsb} implies that $\diai{[z_{i-1},z_i]}$  equals $\diag{[z_{i-1},z_i]}$ or we obtain $\diai{[z_{i-1},z_i]}$ from  $\diag{[z_{i-1},z_i]}$ by a vertical reflection. This proves the first half of part \eqref{ldCoPSetwChAmsd}. The rest is evident.
\end{proof}

\begin{lemma}\label{lMcuWeHlQtTt}
Let $\bgamma$ be a cover-preserving join-congruence of the square grid $G$ such that, for all $i\in \set{1,\ldots,n}$, 
$(c_{i-1},c_i)\notin \bgamma$ and $(d_{i-1},d_i)\notin \bgamma$. Then $\bgamma=\bigvee_{B\in \scells\bgamma}\jcongg(B)$.
\end{lemma}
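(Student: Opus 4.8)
The plan is to prove the two inclusions $\bgamma'\le\bgamma$ and $\bgamma\le\bgamma'$ separately, writing $\bgamma':=\bigvee_{B\in\scells\bgamma}\jcongg(B)$. The inclusion $\bgamma'\le\bgamma$ is the easy half: if $B=\celldn{c_i\vee d_j}$ is a source cell of $\bgamma$, then by the very definition of a source cell $\bgamma$ collapses the set $\set{c_{i-1}\vee d_j,\, c_i\vee d_{j-1},\, c_i\vee d_j}$ of the top three elements of $B$; since $\jcongg(B)=\jcongg(c_i\vee d_j)$ is by definition the smallest join-congruence collapsing this set, $\jcongg(B)\le\bgamma$, and taking the join over all source cells yields $\bgamma'\le\bgamma$.

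For the reverse inclusion I would first record two routine facts. First, join-congruence classes are convex: if $x\equiv_{\bgamma}y$ and $x\le w\le y$, then joining $x\equiv_\bgamma y$ with $w$ gives $w=x\vee w\equiv_\bgamma y\vee w=y$, so $w\equiv_\bgamma x$. Hence, whenever $\bgamma$ collapses a pair it collapses every prime interval along a maximal chain between its two comparable ends, and it therefore suffices to show that every prime interval collapsed by $\bgamma$ is collapsed by $\bgamma'$. Second, for $t\le j$ the single generator $\jcongg(c_i\vee d_t)$ already collapses $(c_{i-1}\vee d_j,\,c_i\vee d_j)$: it collapses the top edge $(c_{i-1}\vee d_t,\,c_i\vee d_t)$, and one then joins repeatedly with $d_{t+1},\dots,d_j$ (cf.\ Lemma~\ref{lMbETapIdesCribs}); the vertical statement is symmetric.

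The heart of the argument is the following descent claim: if $\bgamma$ collapses the horizontal prime interval $(c_{i-1}\vee d_j,\,c_i\vee d_j)$, then $\bgamma$ has a source cell $\celldn{c_i\vee d_t}$ with $t\le j$. To prove it, let $j_0$ be minimal with $(c_{i-1}\vee d_{j_0},\,c_i\vee d_{j_0})\in\bgamma$; the hypothesis $(c_{i-1},c_i)=(c_{i-1}\vee d_0,\,c_i\vee d_0)\notin\bgamma$ forces $j_0\ge1$, and $(c_{i-1}\vee d_{j_0-1},\,c_i\vee d_{j_0-1})\notin\bgamma$. Consider the $4$-cell $B=\celldn{c_i\vee d_{j_0}}$ with bottom $p=c_{i-1}\vee d_{j_0-1}$, top $q=c_i\vee d_{j_0}$, and sides $\ell=c_i\vee d_{j_0-1}$, $r=c_{i-1}\vee d_{j_0}$. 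Its upper edge $(r,q)$ is collapsed while its lower edge $(p,\ell)$ is not, so $p\not\equiv_\bgamma\ell$. Since $r\equiv_\bgamma q$, the cell would be $\bgamma$-forbidden by \eqref{ecoprecoiffnoforbs} unless two of $p,\ell,r$ are $\bgamma$-equivalent; the case $p\equiv_\bgamma r$ is impossible, because joining $(p,r)$ with $c_i$ gives $\ell\equiv_\bgamma q$, whence $p\equiv_\bgamma r\equiv_\bgamma q\equiv_\bgamma\ell$ contradicts $p\not\equiv_\bgamma\ell$. Thus $\ell\equiv_\bgamma r$, hence $\ell\equiv_\bgamma q$, and (again by $p\not\equiv_\bgamma\ell$) $p\not\equiv_\bgamma q$; this is exactly the defining property of a source cell, so $B\in\scells\bgamma$ with $t=j_0\le j$. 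Combined with the second fact above, $\bgamma'$ collapses the given prime interval; the vertical case is symmetric, using $(d_{j-1},d_j)\notin\bgamma$. Together with convexity this gives $\bgamma\le\bgamma'$, and the two inclusions finish the proof.

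I expect the main obstacle to be the cell-level analysis in the descent claim: correctly matching the four edges of $\celldn{c_i\vee d_{j_0}}$ to the collapse pattern, and using the no-$\bgamma$-forbidden-$4$-cell property to rule out the parasitic case $p\equiv_\bgamma r$, so that the lowest collapsed horizontal edge in a column sits precisely atop a source cell rather than in its interior.
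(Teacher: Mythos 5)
Your proof is correct and follows essentially the same route as the paper's: the easy inclusion from the generators, reduction to prime intervals, taking the minimal collapsed horizontal edge in a column, and using the absence of $\bgamma$-forbidden $4$-cells to identify the cell below it as a source cell. The only difference is that you prove directly (via the join-with-$d_{t+1},\dots,d_j$ propagation and the explicit case analysis on $p,\ell,r$) what the paper obtains by citing \cite[(14)+Cor.~22]{r:czg-mtx} and by a terser appeal to \eqref{ecoprecoiffnoforbs}.
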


\begin{proof}Denote $\bigvee_{B\in \scells\bgamma}\jcongg(B)$ by $\bdelta$.  Since $\jcongg(B)\leq \bgamma$ for all $B\in \scells\bgamma$, we have $\bdelta\leq \bgamma$. Hence, it  suffices to show that if $\bgamma$ collapses a covering pair  of $G$, then so does $\bdelta$.  
Let $c_{i-1}\vee d_j\prec c_i\vee d_j$ be a covering pair collapsed by $\bgamma$. (The other case, $c_i\vee d_{j-1}\prec c_i\vee d_j$, is similar.) 
We know from \czgonly\cite[(14)+Cor.\ 22]{r:czg-mtx} that 
\begin{equation}\label{aEmIsglwcuwGb}
(c_{i-1}\vee d_j, c_i\vee d_j)\in \bdelta
\,\text{ if{}f }\, \celldn{c_i\vee d_t}\in \scells\bgamma\text{ for some }t\in\set{1,\ldots,j}\text.
\end{equation}
Take the minimal $t$ such that $(c_{i-1}\vee d_t, c_i\vee d_t)\in \bgamma$. Clearly, $t\in\set{1,\ldots,j}$. Since $\celldn{c_i\vee d_t}$ cannot be a $\bgamma$-forbidden $4$-cell, it belongs to $\scells\bgamma$. By \eqref{aEmIsglwcuwGb}, this implies $(c_{i-1}\vee d_j, c_i\vee d_j)\in \bdelta$. 
\end{proof}

The next lemma uses the notation of Remark~\ref{reMaRidke}.

\begin{lemma}\label{LpPgrScllsbepi} Let $\pi\in S_n$. Then $\scells{\bbeta_\pi}=\graf\pi$. 
\end{lemma}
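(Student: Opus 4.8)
The plan is to verify the equality cell by cell, since both sides are sets of $4$-cells of the grid $G$: every source cell is by definition a $4$-cell, every $4$-cell of $G$ is $\celldn{c_i\vee d_j}$ for a unique pair $i,j\in\set{1,\ldots,n}$, and $\graf\pi$ consists exactly of those cells with $j=\pi(i)$. So it suffices to fix an arbitrary $4$-cell $\celldn{c_i\vee d_j}$ and decide, in terms of $\pi$ alone, when it is a source cell of $\bbeta_\pi$; I then compare the answer with membership in $\graf\pi$.

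First I would unwind the source-cell definition for this particular cell, whose bottom, two sides, and top are $c_{i-1}\vee d_{j-1}$, the pair $c_i\vee d_{j-1}$ and $c_{i-1}\vee d_j$, and $c_i\vee d_j$, respectively. Being a source cell means both upper edges are collapsed by $\bbeta_\pi$ while the bottom stays in a class different from that of the top. Since collapsing the two upper edges already merges both sides with the top, the bottom avoids the top's class precisely when neither lower edge is collapsed. Thus the source-cell property splits cleanly into four edge conditions: the two upper edges collapsed and the two lower edges not collapsed.

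The key step is to read off each of these four conditions from Lemma~\ref{lMbETapIdesCribs}\eqref{lMbETapIjJCribsa}, taking the appropriate index shifts. The two upper edges being collapsed translate into $\pi(i)\leq j$ and $\pi^{-1}(j)\leq i$; the two lower edges failing to be collapsed (applying the same lemma with $j$ replaced by $j-1$ and with $i$ replaced by $i-1$) translate into $\pi(i)\not\leq j-1$ and $\pi^{-1}(j)\not\leq i-1$, i.e.\ $\pi(i)\geq j$ and $\pi^{-1}(j)\geq i$. Together these force $\pi(i)=j$ (equivalently $\pi^{-1}(j)=i$), and conversely $\pi(i)=j$ makes all four conditions hold. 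Hence $\celldn{c_i\vee d_j}\in\scells{\bbeta_\pi}$ if{}f $j=\pi(i)$, i.e.\ if{}f $\celldn{c_i\vee d_j}\in\graf\pi$, which is the asserted equality $\scells{\bbeta_\pi}=\graf\pi$.

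I expect no genuine obstacle; the whole argument is a bookkeeping exercise with Lemma~\ref{lMbETapIdesCribs}. The two points demanding care are the correct index shifts when applying that lemma to the two lower edges, and the reduction of ``bottom not collapsed to top'' to the two lower-edge non-collapse conditions, which rests on the observation that, once the upper edges are collapsed, the bottom lies in the top's class if{}f at least one lower edge is collapsed. As an alternative one could invoke part~\eqref{lMbETapIjJCribsb} to phrase the edge conditions directly via $\graf\pi$, but the $\pi(i)$/$\pi^{-1}(j)$ formulation of part~\eqref{lMbETapIjJCribsa} makes the cancellation down to $\pi(i)=j$ most transparent.
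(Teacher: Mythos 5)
Your proof is correct, but it follows a different route from the paper's. You establish a pointwise ``if and only if'': unwinding the source-cell definition for the cell $\celldn{c_i\vee d_j}$ into four edge conditions and translating each via Lemma~\ref{lMbETapIdesCribs}\eqref{lMbETapIjJCribsa}, which collapses to $\pi(i)=j$. The paper instead splits the equality into an inclusion plus a counting argument: it observes that each ``row'' $\set{\celldn{c_i\vee d_t}: 1\leq t\leq n}$ contains at most one source cell of any join-congruence (so $|\scells{\bbeta_\pi}|\leq n=|\graf\pi|$), and gets $\graf\pi\subseteq\scells{\bbeta_\pi}$ from Lemma~\ref{lMbETapIdesCribs}\eqref{lMbETapIjJCribsb}; equality of finite sets then follows. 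The paper's version is shorter but leaves the one-source-cell-per-row observation and the inclusion as exercises; yours is more explicit and yields the sharper statement that $\celldn{c_i\vee d_j}$ is a source cell exactly when $j=\pi(i)$, at the cost of some index bookkeeping. Two small points in your write-up deserve a sentence each: the reduction of ``bottom not in the top's class'' to ``neither lower edge collapsed'' uses that classes of a join-congruence are convex (for the forward direction) and transitivity together with the already-collapsed upper edges (for the converse); and when you apply Lemma~\ref{lMbETapIdesCribs}\eqref{lMbETapIjJCribsa} with $j$ replaced by $j-1$ or $i$ by $i-1$, the boundary cases $j=1$ and $i=1$ fall outside the lemma's stated range but are covered by part \eqref{lMbETapIjJCribsc}, and there the required inequalities $\pi(i)\geq 1$ and $\pi^{-1}(j)\geq 1$ hold vacuously, so nothing breaks.
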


\begin{proof}.
For $i\in\set{1,\ldots,n}$, we say that  $\bigset{\celldn{c_i\vee d_t}: t\in\set{1,\ldots,n}}$ is a \emph{row of $4$-cells}. Obviously, for every join-congruence $\balpha $ of $G$, every row contains at most one source cell of $\balpha $. Hence $|\graf\pi|= n\geq |\scells{\bbeta_\pi}|$. 
On the other hand, it is straightforward to infer from Lemma~\ref{lMbETapIdesCribs}\eqref{lMbETapIjJCribsb} that $\graf\pi\subseteq  \scells{\bbeta_\pi}$.
\end{proof}

\begin{lemma}\label{lMpsinullaNdphnuOK} $\phi_0$ and $\psi_0$ are well-defined, and they are reciprocal bijections.
\end{lemma}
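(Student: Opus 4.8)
The plan is to route both directions through the source-cell description of the congruences $\bbeta_\pi$ and $\bbeta_{\diag L}$, exploiting the reconstruction principle recorded in Lemma~\ref{lMcuWeHlQtTt}: a cover-preserving join-congruence of the square grid $G$ that collapses no covering pair on either boundary chain is completely determined by its set of source cells, via $\bgamma=\bigvee_{B\in\scells\bgamma}\jcongg(B)$. Since the assignment $\pi\mapsto\graf\pi$ is an injection of $S_n$ into the collection of grid matrices (the cell $\celldn{c_i\vee d_j}$ encodes the pair $(i,j)$, so $\graf\pi$ recovers $\pi$), it will suffice to show that $\phi_0$ and $\psi_0$ translate back and forth between a permutation and its source-cell pattern in mutually inverse fashion.

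First I would dispose of well-definedness. For $\phi_0$, Lemma~\ref{eLleGTbgRd} gives $\quotientalg G{\bbeta_\pi}\in\smLat n$, so this quotient is planar of length $n$, and Lemma~\ref{lIHcidismbcHin} identifies its boundary as the union of the two maximal chains $\set{\blokk{c_i}{\bbeta_\pi}}$ and $\set{\blokk{d_i}{\bbeta_\pi}}$ (these are maximal chains by \eqref{eThEciperBtacoChns}). Lemma~\ref{ldCoPSetwChAmsW}\eqref{ldCoPSetwChAmsc} then yields a planar diagram having these two chains as its left and right boundary chains; as diagrams are taken up to boundary similarity, this canonical diagram $\diac{\quotientalg G{\bbeta_\pi}}$ is unique, so $\phi_0$ is well-defined with values in $\diag{\smLat n}$. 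Well-definedness of $\psi_0$ is immediate from Lemma~\ref{defperMbymtx} together with Proposition~\ref{prdiagLpermalleqal}.

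For $\psi_0\circ\phi_0=\id_{S_n}$, I would first observe that when $\diag L=\phi_0(\pi)$ the map $\eta$ of Definition~\ref{defpermmtxos} is exactly the canonical projection $G\to\quotientalg G{\bbeta_\pi}$ (it sends $c_i\vee d_j$ to $\blokk{c_i}{\bbeta_\pi}\joinl\blokk{d_j}{\bbeta_\pi}=\blokk{c_i\vee d_j}{\bbeta_\pi}$), so $\bbeta_{\diag L}=\bbeta_\pi$. Remark~\ref{reMaiWrGRidk} and Lemma~\ref{LpPgrScllsbepi} then give $\graf{\pi_{\phi_0(\pi)}}=\scells{\bbeta_{\phi_0(\pi)}}=\scells{\bbeta_\pi}=\graf\pi$, whence $\pi_{\phi_0(\pi)}=\pi$ by injectivity of $\pi\mapsto\graf\pi$. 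Conversely, for $\phi_0\circ\psi_0=\id$, set $\pi:=\pi_{\diag L}$; then Remark~\ref{reMaiWrGRidk} and Lemma~\ref{LpPgrScllsbepi} give $\scells{\bbeta_\pi}=\graf\pi=\scells{\bbeta_{\diag L}}$. Both $\bbeta_\pi$ and $\bbeta_{\diag L}$ are cover-preserving join-congruences of $G$ collapsing no covering pair on either boundary chain (by Lemma~\ref{lMbETapIdesCribs}\eqref{lMbETapIjJCribsc} and by the construction of $\eta$, respectively), so Lemma~\ref{lMcuWeHlQtTt} forces $\bbeta_\pi=\bigvee_{B\in\scells{\bbeta_\pi}}\jcongg(B)=\bigvee_{B\in\scells{\bbeta_{\diag L}}}\jcongg(B)=\bbeta_{\diag L}$. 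Hence $\quotientalg G{\bbeta_\pi}=\quotientalg G{\bbeta_{\diag L}}$, and the Homomorphism-Theorem isomorphism $\blokk{c_i\vee d_j}{\bbeta_{\diag L}}\mapsto c_i\joinl d_j$ carries the two boundary chains of the canonical diagram onto $\leftb{\diag L}$ and $\rightb{\diag L}$, so $\phi_0(\pi_{\diag L})=\diag L$ up to boundary similarity.

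Both composites being identities, $\phi_0$ and $\psi_0$ are mutually inverse bijections. I expect the one genuinely load-bearing step to be the recovery $\bbeta_{\diag L}=\bigvee_{B\in\scells{\bbeta_{\diag L}}}\jcongg(B)$ through Lemma~\ref{lMcuWeHlQtTt}: this is what lets the permutation $\pi_{\diag L}$—defined only through the source-cell pattern—reconstruct the entire congruence, and hence the diagram itself. The remaining verifications (that $\eta$ coincides with the quotient map, and that the Homomorphism-Theorem isomorphism respects the left/right boundaries) are routine bookkeeping.
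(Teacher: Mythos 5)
Your proposal is correct and follows essentially the same route as the paper: well-definedness of $\phi_0$ via Lemmas~\ref{eLleGTbgRd}, \ref{lIHcidismbcHin} and \ref{ldCoPSetwChAmsW}\eqref{ldCoPSetwChAmsc}, and both composite identities obtained by matching source cells through Remark~\ref{reMaiWrGRidk}, Lemma~\ref{LpPgrScllsbepi} and the reconstruction Lemma~\ref{lMcuWeHlQtTt} to force $\bbeta_\pi=\bbeta_{\diag L}$, with the boundary bookkeeping handled by the Homomorphism-Theorem isomorphism. The only (immaterial) difference is that the paper gets $\bbeta_\pi=\bigvee_{B\in\scells{\bbeta_{\diag L}}}\jcongg(B)$ directly from the definition of $\bbeta_\pi$ rather than by a second application of Lemma~\ref{lMcuWeHlQtTt}.
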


\begin{proof} We know from Lemma~\ref{eLleGTbgRd} that $\quotientalg G{\bbeta_\pi}\in \smLat n$. 
Therefore it follows from Lemma~\ref{ldCoPSetwChAmsW}\eqref{ldCoPSetwChAmsc} and 
Lemma~\ref{lIHcidismbcHin} that $\diac{\quotientalg G{\bbeta_\pi}}$ is well-defined.  Consequently,  $\phi_0$ is a well-defined $S_n\to \diag{\smLat n}$ map.
By Section~\ref{sectionthreeways} (for example, by Proposition~\ref{prdiagLpermalleqal} combined with  Lemma~\ref{lemdeftwoworks} or Lemma~\ref{defperMbymtx}), $\psi_0$ is a well-defined $\diag{\smLat n}\to S_n$ map.

Next, let $\diag L\in\diag{\smLat n}$, and denote $\biglat{\diag L}$ by $L$. 
The join-homomorphism $\eta$ from Definition~\ref{defpermmtxos} is surjective by \eqref{ejirsubbound}. Hence $L\cong \quotientalg  G{\bbeta_{\diag L}}$, and the map $\tilde\eta\colon \quotientalg G{\bbeta_{\diag L}} \to L$, defined by $\blokk x{\bbeta_{\diag L}}\mapsto \eta(x)$, is an isomorphism. 
Furthermore, $\pi:=\pi_3=\psi_0(\diag L)=\pi_{\diag L}$ is determined by 
$ \scells {\bbeta_{\diag L}}$. 
Combining the definition of $\phi_0$ and Remark~\ref{reMaiWrGRidk},  
\[\bbeta_\pi=\bigvee_{i=1}^n \jcongg(c_i\vee d_{\pi(i)}) = \bigvee\set{\jcongg(B)  :  B\in\graf\pi} =  \bigvee\set{\jcongg(B)  :  B\in  \scells {\bbeta_{\diag L}} }\text.\]
Hence, by \eqref{ecidjseMoDbEtakd} and Lemma~\ref{lMcuWeHlQtTt},  $\bbeta_\pi= 
\bbeta_{\diag L}$. Thus, $L\cong \quotientalg G{\bbeta_\pi}$. 
Since $\tilde\eta(\blokk{c_i}{\bbeta_\pi})=\eta(c_i)=c_i$, we obtain that $\tilde\eta\bigl(\leftb{ \diac{\quotientalg G{\bbeta_\pi}}}\bigr) = \leftb{\diag L}$. 
Similarly, we conclude that $\tilde\eta\bigl(\rightb{ \diac{\quotientalg G{\bbeta_\pi}}}\bigr) = \rightb{\diag L}$,  whence it follows that $\phi_0(\psi_0(\diag L))=\phi_0(\pi)=\diag L$. 

Next, let $\pi\in S_n$. Let $G$ be the corresponding square grid of length $2n$. We use the notation introduced in \eqref{egRidnostdnIon}. Let $\quotientalg G{\bbeta_\pi}$, $\diac{\quotientalg G{\bbeta_\pi}}$,   $\blokk{c_i}{\bbeta_\pi}$, and  $\blokk{d_j}{\bbeta_\pi}$  be denoted by $L$,  $\diag L$, 
$c_i'$, and $d_j'$, respectively.   
Observe that  $\leftb{\diag L}=\set{c_i':  0\leq i\leq n }$ and  $\rightb{\diag L}=\set{d_j':  0\leq j\leq n }$. Let $G'$  be their direct product. We identify the lower left  boundary and the lower right boundary of $G'$ with $\leftb{\diag L}$ and $\rightb{\diag L}$, respectively. Hence $G'=\bigset{c_i'\joinvg d_j':i,j\in\set{ 0,\ldots,n}}$. 

We have to consider three join-homomorphisms. Let $\gamma\colon G\to G'$, defined by $c_i\joing d_j\mapsto c_i'\joinvg d_j'$, be the first one.  The second one is  $\eta'\colon G'\to L$,  defined by 
$c_i'\joinvg d_j'\mapsto
c_i'\joinl d_j'=\blokk{c_i}{\bbeta_\pi} \joinl  \blokk{d_j}{\bbeta_\pi}= \bblokk{c_i\joing  d_j}{\bbeta_\pi}$. Let $\eta:= \eta'\circ\gamma$ be the third one, that is,  
\begin{equation}\label{EtPdcRprime}
\eta\colon G\to L,\quad 
\eta(c_i\joing d_j) = \eta'(\gamma(c_i\joing d_j)\bigr) = \bblokk{c_i\joing  d_j}{\bbeta_\pi}\text.
\end{equation}
According to Definition~\ref{defpermmtxos}, $\pi_{\diag L}$ is defined by the kernel of $\eta'$. 
Since the notation of grid elements in Definition~\ref{defpermmtxos} is irrelevant and $\gamma$ is an isomorphism, Definition~\ref{defpermmtxos} applied to $\eta$ yields the same permutation. 
By \eqref{EtPdcRprime}, $\Ker\eta$, the kernel of $\eta$, is $\bbeta_\pi$. 
Hence, we obtain from Remark~\ref{reMaiWrGRidk} that  $\psi_0(\diag L)=\pi_{\diag L}$ is the unique permutation that satisfies the equation  
$\graf{\bigl(\psi_0(\diag L)\bigr)} = 
\scells{\Ker \eta} =
\scells{\bbeta_\pi}$. By Lemma~\ref{LpPgrScllsbepi}, this is equivalent with $\graf{\bigl(\psi_0(\diag L)\bigr)}=\graf\pi$. Since $\pi$ instead of $\psi_0(\diag L)$ also satisfies this equation, we obtain that $\psi_0(\diag L)= \pi$.
Thus, $\pi=\psi_0(\diag L)=\psi_0(\phi_0(\pi))$. Therefore, $\phi_0$ and $\psi_0$ are reciprocal bijections.
\end{proof}

\begin{proof}[Proof of Theorem~\ref{thmmAin}]
Clearly, if $L_1,L_2\in \smLat n$ and $L_1\cong L_2$, then $\Diag {L_1}=\Diag {L_2}$. Hence, to show that $\psi$ is well-defined, it suffices to consider two diagrams of the \emph{same} lattice.
Assume that $L\in\smLat n$,  $\diag L,\diah L\in\Diag L$, and $|L|\geq 2$. Let $\Nar L=\set{0=z_0<z_1<\cdots <z_t=1}$, $t\in \mathbb N$. The height of $z_i$ will be denoted by $h_i$. 
It follows trivially from Definition~\ref{defpermmirr} or \ref{defpermone} that $I_i:=\set{h_{i-1}+1,\ldots,h_i}$ is both a $\pi_{\diag L}$-section and a $\pi_{\diah L}$-section. By Lemma~\ref{lRtoSnlegsmgTzs}, 
\begin{equation}\label{diWmMOsuVsK}
\restrict {\pi_{\diag L}} {I_i} = \pi_{\diag {[z_{i-1},z_i]}}
,\qquad
\restrict {\pi_{\diah L}} {I_i} = \pi_{\diah {[z_{i-1},z_i]}}\text.
\end{equation}
It follows  from, say, Definition~\ref{defpermone} that if we interchange the left and the right boundaries then we obtain the inverse permutation.  For each $i\in\set{1,\ldots,t}$,  Lemma~\ref{ldCoPSetwChAmsW}\eqref{ldCoPSetwChAmsd} permits only two cases: 
$\diah{[z_{i-1},z_i]}$ is obtained from $\diag {[z_{i-1},z_i]}$ by a vertical reflection or 
 $\diah{[z_{i-1},z_i]}=\diag {[z_{i-1},z_i]}$. In the first case, \eqref{diWmMOsuVsK} implies
$\restrict {\pi_{\diah L}} {I_i}= (\restrict {\pi_{\diag L}} {I_i})^{-1}$. In the second case,  
\eqref{diWmMOsuVsK} yields that $\restrict {\pi_{\diah L}} {I_i}= \restrict {\pi_{\diag L}} {I_i}$. Therefore, $\restrict {\pi_{\diah L}} {I_i}\in\set{\restrict {\pi_{\diag L}} {I_i}, (\restrict {\pi_{\diag L}} {I_i})^{-1} }$  for $i=1,\ldots, t$. Consequently, we derive from
Lemma~\ref{lMasecSeGm}\eqref{lMasecSeGd} that 
$\bigl(\psi_0(\diag L), \psi_0(\diah L) \bigr)=(\pi_{\diag L},\pi_{\diah L})\in\secie$. Thus, $\psi$ is a well-defined map.

Next, assume that $\pi,\sigma\in S_n$ such that $(\pi,\sigma)\in\secie$. We know that $\pi$ and $\sigma$ have the same segments. Let $0=j_0<\cdots<j_t=n$ such that $\Seg\pi=\Seg\sigma=\bigset{ \set{j_{r-1}+1,\ldots,j_r}: 1\leq r\leq t}$. Let $\mu\in\set{\pi,\sigma}$. Then 
\begin{equation}\label{eZidkdscuhrmSt}
\Nar {\phi_0(\mu)} = \bigset{\blokk {c_{j_r}}{\bbeta_\mu}: 0\leq r\leq t}
\end{equation}
 by \eqref{eThEciperBtacoChns} and Lemmas~\ref{lmKoooszeRe} and \ref{lIHcidismbcHin}. 
Consider an $r\in\set{1,\ldots,t}$. For brevity,  let $I=\set{j_{r-1}+1,\ldots,j_r}$, the $r$-th segment of $\pi$ and $\sigma$.
We know that $\restrict\sigma I\in \set{\restrict\pi I, (\restrict\pi I)^{-1}}$.
We obtain from Lemma~\ref{lRtoSnlegsmgTzs} that 
$\diac{\bigl[ \blokk {c_{j_{r-1}}}{\bbeta_\mu}, \blokk {c_{j_r}}{\bbeta_\mu}\bigr]}=\phi_0({\restrict\mu I})$. 
Hence if $\restrict\sigma I= \restrict\pi I$, then 
$\diac{\bigl[ \blokk {c_{j_{r-1}}}{\bbeta_\pi}, \blokk {c_{j_r}}{\bbeta_\pi}\bigr]}= \diac{\bigl[ \blokk {c_{j_{r-1}}}{\bbeta_\sigma}, \blokk {c_{j_r}}{\bbeta_\sigma}\bigr]}$, implying that 
\begin{equation}\label{isUeEiwcHkK}
\bigl[ \blokk {c_{j_{r-1}}}{\bbeta_\pi}, \blokk {c_{j_r}}{\bbeta_\pi}\bigr] \cong \bigl[ \blokk {c_{j_{r-1}}}{\bbeta_\sigma}, \blokk {c_{j_r}}{\bbeta_\sigma}\bigr]\text.
\end{equation}
Otherwise, assume that $\restrict\sigma I= (\restrict\pi I)^{-1}$. Therefore, when Definition~\ref{mapsDefs}\eqref{mapsDeffsa} is applied to  $\restrict\sigma I$ and $\restrict\pi I$,  the role of the $c_i$ and that of the $d_i$ are interchanged. Consequently, $\diac{\bigl[ \blokk {c_{j_{r-1}}}{\bbeta_\sigma}, 
\blokk {c_{j_r}}{\bbeta_\sigma}\bigr]}$ is obtained from 
$\diac{\bigl[ \blokk {c_{j_{r-1}}}{\bbeta_\pi}, \blokk {c_{j_r}}{\bbeta_\pi}\bigr]}$ by a vertical reflection, and \eqref{isUeEiwcHkK} holds again. 
From \eqref{isUeEiwcHkK}, applied for $r=1,\ldots,t$, and \eqref{eZidkdscuhrmSt}, we obtain that $\biglat{\phi_0(\pi)}\cong \biglat{\phi_0(\sigma)}$. Thus,  $\phi$ is a well-defined map.

Finally, since $\phi_0$ and $\psi_0$ are reciprocal bijections by Lemma~\ref{lMpsinullaNdphnuOK}, so are the maps $\phi$ and $\psi$.
\end{proof}

\begin{proof}[Proof of Corollary~\ref{RcThTwocLsSuQal}]
As detailed in the Introduction, the second part of the statement is known. 
By Theorem~\ref{thmmAin}, it suffices to show that for each $\pi\in S_n$ there exist a finite cyclic group $G$ and composition series $\vakvec H$ and $\vakvec K$ of $G$ such that the unique permutation $\sigma$ associated with 
$\diah{(\dual{\Halocs(\vakvec H,\vakvec K)})}$, see  Remark~\ref{whIsPefRjHOdr},  equals $\pi$. Let $p_1,\ldots,p_n$ be distinct primes, and let $G$ be the cyclic group of order $p_1p_2\ldots p_n$. For $i=1,\ldots,n$, let $H_i$ and $K_i$ be the unique subgroup of order $p_1\dots p_i$ and $p_{\pi^{-1}(1)}\ldots p_{\pi^{-1}(i)}$, respectively. Then $|H_i/H_{i-1}|=p_i$ and 
$|K_j/K_{j-1}|=p_{\pi^{-1}(j)}$, for all $i,j\in\set{1,\ldots,n}$. 
 Since down-and-up projective quotients are isomorphic, $p_i=|H_i/H_{i-1}|$ equals $|K_{\sigma(i)}/K_{\sigma(i)-1}|$, which is $p_{\pi^{-1}(\sigma(i))}$. Hence $i=\pi^{-1}(\sigma(i))$, for all $i\in\set{1,\ldots,n}$, and we conclude that $\sigma=\pi$.
\end{proof}

\subsection{Note added on January 9, 2013}
Using \init{G.\ }Cz\'edli and \init{E.\,T.~}Schmidt~\cite[Theorem 1.3 and Lemma 2.7]{rczgschjordhold}, it is straightforward to prove that Definition~\ref{defpermone} yields the same permutation as the one defined for a particular case in \init{R.\,P.\ }Stanley~\cite{stanley}. It is routine to see that Definition~\ref{defpermmtxos} gives the same permutation as the one defined by  
\init{H.~}Abels~\cite{abelsgaldist}.
Theorem~\ref{thmmAin} strengthens \init{H.~}Abels~\cite[Remark 2.14]{abelsgaldist}, which asserts that a slim semimodular lattice is determined by the permutation associated with it. Our approach and terminology are different from those in \cite{abelsgaldist} and \cite{stanley}.

\end{document}